\documentclass[12pt,notitlepage]{amsart}
\usepackage{latexsym,amsfonts,amssymb,amsmath,amsthm}
\usepackage{graphicx}
\usepackage{color}
\usepackage{multirow}
\usepackage[normalem]{ulem}
\usepackage{datetime2}
\pagestyle{headings}

\let\turc\c
\usepackage{etoolbox}
\robustify\turc %new thing to get the c in Topacogullari
\renewcommand{\c}{\mathfrak{c}}
\newcommand{\bpm}{\begin{pmatrix}}
\newcommand{\epm}{\end{pmatrix}}

\usepackage[inner=1.0in,outer=1.0in,bottom=1.0in, top=1.0in]{geometry}

\newcommand{\mz}{\ensuremath{\mathbb Z}}

 % ring of integers!

\newcommand{\shortmod}{\ensuremath{\negthickspace \negthickspace \negthickspace \pmod}}

\DeclareMathOperator{\supp}{supp}

%%%%%%%%%%%%%%%%%%%%%%%%%%%%%%%%%%%%%%%%%%%%%%%%%%%%%%%%%%%
\newtheorem{theorem}{Theorem}[section]
\newtheorem{lemma}[theorem]{Lemma}

\newtheorem{proposition}[theorem]{Proposition}

% Do not remove the following line
\theoremstyle{definition}
\newtheorem{definition}[theorem]{Definition}
% Do not remove the following line
\theoremstyle{remark}
\newtheorem{remark}[theorem]{Remark}

\numberwithin{equation}{section}

%%%%%%%%%%%%%%%%%%%%%%%%%%%%%%%%%%%%%%%%%%%%%%%%%%%%%%%%%%%
% ******************************** Eviatar Additional commands ************************************

\DeclareFontFamily{OML}{rsfs}{\skewchar\font'177}
\DeclareFontShape{OML}{rsfs}{m}{n}{ <5> <6> rsfs5 <7> <8> <9>
rsfs7 <10> <10.95> <12> <14.4> <17.28> <20.74> <24.88> rsfs10 }{}
\DeclareMathAlphabet{\mathfs}{OML}{rsfs}{m}{n}

% ******************************* New commands *******************************************************

\newcommand{\BQ}{{\mathbb{Q}}}

\newcommand{\BZ}{{\mathbb{Z}}}

\newcommand{\CA}{{\mathcal{A}}}
\newcommand{\CB}{{\mathcal{B}}}

\newcommand{\CD}{{\mathcal{D}}}

\newcommand{\CL}{{\mathcal{L}}}
\newcommand{\CM}{{\mathcal{M}}}

\newcommand{\CS}{{\mathcal{S}}}

\newcommand{\CW}{{\mathcal{W}}}

% ******************* Bradford's shortcuts ***********************
\newcommand{\Mod}[1]{\ (\mathrm{mod}\ #1)}

%\newcommand{\blue}{\textcolor{blue}}
%\newcommand{\red}{\textcolor{red}}
%\DeclareMathOperator{\sgn}{sgn}

% ********************************** Eviatar's shortcuts ***********************************************

%\newcommand{\for}{{\rm for}}

%\newcommand{\e}{{\bf E}}
\newcommand{\bae}{\begin{equation}\begin{aligned}}
\newcommand{\eae}{\end{aligned}\end{equation}}

%\newcommand{\Z}{\mathbb{Z}}
%\newcommand{\cp}{\text{cap}}

%,,,,Yuan stuff
\def\beq{ \begin{equation} }
\def\eeq{ \end{equation} }

\def\square{\vcenter{\vbox{\hrule height .4pt
  \hbox{\vrule width .4pt height 5pt \kern 5pt
        \vrule width .4pt} \hrule height .4pt}}}

% ******************************* Greek shortcuts ******************************************************

%\newcommand{\ep}{{\epsilon}}

%\newcommand{\note}[1]{{\color{red}{ \bf{ [Note: #1]}}}}

% ******************************************************************************************************
% *********************************** Begin Document ***************************************************
% ******************************************************************************************************

\begin{document}

\author{Bradford Garcia}
% \address{Department of Mathematics \\
%  Texas A\&M University \\
%  College Station \\
%  TX 77843-3368 \\ U.S.A.}
\email{garcia@math.tamu.edu}

\author{Matthew P. Young} 
\email{myoung@math.tamu.edu}
\address{Department of Mathematics\\
	  Texas A\&M University\\
	  College Station\\
	  TX 77843-3368\\
		U.S.A.}

		\thanks{This material is based upon work supported by the National Science Foundation under agreement No. DMS-2001306 and DMS-2302210 (M.Y.).  Any opinions, findings and conclusions or recommendations expressed in this material are those of the authors and do not necessarily reflect the views of the National Science Foundation.
 }

 \begin{abstract}
 We prove an asymptotic formula for the second moment of central values of Dirichlet $L$-functions restricted to a coset.
More specifically, consider a coset of the subgroup of characters modulo $d$ inside the full group of characters modulo $q$.  
Suppose that $\nu_p(d) \geq \nu_p(q)/2$ for all primes $p$ dividing $q$.  In this range, we obtain an asymptotic formula with a power-saving error term; curiously, there is a secondary main term of rough size $q^{1/2}$ here which is not predicted by the integral moments conjecture of Conrey, Farmer, Keating, Rubinstein, and Snaith.  The lower-order main term does not appear in the second moment of the Riemann zeta function, so this feature is not anticipated from the analogous archimedean moment problem. 

We also obtain an asymptotic result for smaller $d$, with $\nu_p(q)/3 \leq \nu_p(d) \leq \nu_p(q)/2$, with a power-saving error term for $d$ larger than $q^{2/5}$.  In this more difficult range, the secondary main term somewhat changes its form and may have size roughly $d$, which is only slightly smaller than the diagonal main term.  
 \end{abstract}

%\today
\title{Asymptotic second moment of Dirichlet $L$-functions along a thin coset}
\maketitle

\section{Introduction}
The study of moments of families of $L$-functions has a long history.  One strand of research concerns the estimation of moments in order to secure strong subconvexity bounds.  Another direction is to consider the structural properties of the moments, especially in their connections with random matrix theory.  One of the long-standing guiding principles concerns analogies between different families, particularly the consideration of the Riemann zeta function in the $t$-aspect on the one side, and the family of Dirichlet $L$-functions in the $q$-aspect on the other side.  

We begin by discussing some moment problems for the zeta function.
This is a vast subject, and we only briefly touch on a few results pertinent to our narrative.
In \cite{HB2}, Heath-Brown studied the twelfth moment of the Riemann zeta function, finding that
\begin{equation}\label{HB}
    \int_T^{2T} |\zeta(1/2+it)|^{12} dt \ll_\varepsilon T^{2+\varepsilon},
\end{equation}
a result which easily recovers the Weyl bound while also proving in a strong quantitative form that $|\zeta(1/2+it)|$ cannot be too large too often.  Heath-Brown's technique for proving \eqref{HB} is based on leveraging information from short second moments of the zeta function.  A simple modification of \cite[Lemma 1]{HB2} implies
\begin{equation}
\label{eq:HBshortsecondmoment}
 \int_T^{T+T^{1/3}} |\zeta(1/2+it)|^2 dt \ll T^{1/3+\varepsilon},
\end{equation}
for $T \geq 1$.  See also \cite[Section 7.4]{Titchmarsh} and \cite[Chapter 7]{Ivic} for more discussion and other related results.
For example, \cite[Theorem 7.4]{Titchmarsh} gives that
\begin{equation}
\label{eq:zetasecondmomentwithErrorTerm}
 \int_0^{T} |\zeta(1/2+it)|^2 dt = T \log T + (2 \gamma_0 - 1 - \log(2 \pi))T + O(T^{1/2+\varepsilon}),
\end{equation}
and it is remarked that this error term can be reduced to $O(T^{5/12+\varepsilon})$.  
Note that \eqref{eq:zetasecondmomentwithErrorTerm} leads to an asymptotic formula for the second moment in a short interval of length at least $T^{1/2+\varepsilon}$ (or $T^{5/12 + \varepsilon}$ with the alluded-to improved method), simply by writing $\int_T^{T+\Delta} = \int_0^{T+\Delta}  - \int_0^{T}$.

Next we discuss some prior works on $q$-analogs of these results.  
Nunes \cite{N} proved an analog to \eqref{HB} for smooth square-free moduli $q$.  In a complementary direction, 
Milićević and White \cite{MW} proved a variant of \eqref{HB} in the depth aspect, i.e., fixing a prime $p$ and letting $q=p^j$ with $j \rightarrow \infty$.  Both of these works consider upper bounds on ``short'' second moments, where in this context ``short'' refers to a coset.  
Petrow and Young \cite{PetrowYoung} obtained an upper bound on the fourth moment of Dirichlet $L$-functions along a coset of size $\gg q^{2/3+\varepsilon}$.
For the full family second moment, Heath-Brown \cite{HBDirichletAsymptotic}
proved
\begin{equation}
\label{eq:HBsecondmoment}
 \sum_{\chi \shortmod{q}} |L(1/2, \chi)|^2
 = \frac{\phi(q)}{q} \sum_{d | q} \mu(q/d) T(d), 
\end{equation}
where with $\gamma_0 = 0.577\dots$ representing Euler's constant, we have
\begin{equation*}
 T(d) = d( \log \tfrac{d}{8 \pi} + \gamma_0) + 2 \zeta(1/2)^2 d^{1/2} + \sum_{n=0}^{2N-1} c_n d^{-n/2} + O(d^{-N}).
\end{equation*}
One remark that is in order here is that Heath-Brown considers a sum over all Dirichlet characters, and not just the primitive ones.  A second comment is that there is no obvious way to extract from \eqref{eq:HBsecondmoment} a formula for the second moment restricted to a coset, in contrast to the $t$-integral analog for the zeta function.  Finally, we remark that for composite values of $q$, there are a variety of main terms that may be intermediate in size between $q^{1/2}$ and $q$.  On the other hand, the five authors \cite{CFKRS} have conjectured an asymptotic formula for the second moment in this family, but only after restriction to the family of solely the primitive characters modulo $q$.  Conrey \cite{Conrey} verified the five author conjecture for the second moment in this family.  In the same paper, Conrey also derived a beautiful reciprocity formula for the twisted second moment with prime modulus.

To state our main results, we need some notation.  
Suppose $\psi$ is a Dirichlet character modulo $q$ and $d$ is a positive integer with $d|q$ and $q|d^2$.  
 It is easy to see that, as a function of $x$, $\psi(1+dx)$ is an \emph{additive} character with period $q/d$.  Hence
there exists an integer $a_{\psi} \pmod{q/d}$ such that
  $\psi(1+dx) = e(\frac{a_\psi dx}{q})$, where $e(x)=e^{2\pi ix}$. 
  The primitivity of $\psi$ modulo $q$ means that $(a_{\psi}, q/d) = 1$ (for which see Lemma \ref{postnikov} below).
  We assume
\begin{equation}\label{a range}
    0<|a_\psi|<\frac{q}{2d}.
\end{equation}

 Let $\nu_p(\cdot)$ denote the $p$-adic valuation.  For $a$ and $b$ be positive integers, we will write ``$a\prec b$" to mean that $a$ and $b$ share all of the same prime factors and, for any prime $p|b$, $\nu_p(a)<\nu_p(b)$. Similarly, we will write ``$a\preceq b$" to mean that $a$ and $b$ share all of the same prime factors and, for any prime $p|b$, $\nu_p(a)\leq\nu_p(b)$. 

\begin{theorem}\label{main1}
Let $\psi$ be even of conductor $q$, and suppose 
$d \prec q \preceq d^2$.
%$d\prec q$ and $q\preceq d^2$. 
Then
%, for any prime $p|q$, $0<\nu_p(d)<\nu_p(q)\leq2\nu_p(d)$
\begin{equation}\label{main1.1}
    \sum_{\substack{\chi \Mod{d}\\ \chi \text{ even}}} |L(1/2,\chi \cdot \psi)|^2 = \CD+\CA +
    O(q^{-1/8+ \varepsilon} d),
    %O\left(q^\varepsilon\left(d^{1/4}q^{1/4}+\frac{d}{q^{1/8}}\right)\right),
\end{equation}
where with $\theta(q) = \sum_{p|q} \frac{\log{p}}{p-1}$, and $\sigma_\alpha(n) = \sum_{k|n} k^\alpha$, we define
\begin{equation}\label{main1.2}
    \CD = \frac{\varphi(d)}{2}\frac{\varphi(q)}{q}(\log{q} + 2 \gamma_0 + \tfrac{\Gamma'}{\Gamma}(1/4) - \log{\pi} + 2 \theta(q)),
\end{equation}
and
\begin{equation}
    \CA = \frac{\varphi(d)}{d} \sqrt{q} \; 
    \frac{\sigma_0(|a_\psi|)}{\sqrt{|a_\psi|}}.
\end{equation}
\end{theorem}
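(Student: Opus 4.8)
The plan is to open each central value $L(1/2,\chi\psi)$ by an approximate functional equation, so that the left side of \eqref{main1.1} becomes a double Dirichlet sum $\sum_{m,n} (mn)^{-1/2} V(mn/q) \sum_{\chi \bmod d,\ \chi \text{ even}} \chi(m)\overline{\chi(n)} \psi(m)\overline{\psi(n)}$, where $V$ is a smooth weight decaying past $mn \gg q^{1+\varepsilon}$. The inner sum over even characters modulo $d$ is a difference of two orthogonality sums: $\tfrac12\sum_{\chi \bmod d}$ detects $m\equiv \pm n \Mod d$, and with the primitivity/$a_\psi$ normalization this forces $m \equiv \pm n \Mod d$ together with a constraint coming from $\psi$ on the residue modulo $q/d$. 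Writing $m = n + d\ell$ (and the companion $m \equiv -n$ piece) converts the problem into a shifted-divisor-type sum weighted by the additive character $e(a_\psi d\ell / q)$ arising from $\psi(1+dx)=e(a_\psi dx/q)$ via Lemma~\ref{postnikov}.

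The diagonal $m=n$ (i.e. $\ell = 0$) contributes $\CD$: it is $\tfrac12 \cdot \tfrac{\varphi(d)}{\text{something}} \sum_{n} \frac{(n,\text{stuff})}{n} V(n^2/q)$, and after inserting the Mellin transform of $V$ and using the functional equation data $\Gamma'/\Gamma(1/4)$, $\log\pi$, the ramified Euler factors at $p\mid q$ producing $2\theta(q)$, and picking up the double pole of $\zeta(s)^2$-type object at $s=1$ one gets the $\log q + 2\gamma_0 + \cdots$ polynomial of $\varphi(d)$. This is the standard second-moment diagonal computation, carried out carefully to track the constant; I expect the ramification at primes dividing $q$ (but where $\chi$ lives modulo $d$) to be the only subtle bookkeeping here, and it is routine.

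The off-diagonal $\ell \neq 0$ is where both the error term $O(q^{-1/8+\varepsilon}d)$ and the \emph{secondary main term} $\CA$ are produced. For fixed small $\ell$, the congruence $m \equiv n + d\ell$ with the $\psi$-constraint pins $m,n$ into a single residue class modulo $q/\gcd$, so the sum over $n$ is essentially $\sum_{n\equiv \cdot} \frac{\sigma\text{-weight}(n)}{\sqrt{n(n+d\ell)}}$; after Poisson summation in $n$ modulo $q/d$ the zero frequency gives a term of size $\sqrt q \cdot (\text{arithmetic factor in } \ell)$, and summing the arithmetic factor over the relevant $\ell$ (which are governed by $a_\psi$) collapses to $\tfrac{\varphi(d)}{d}\sqrt q \, \sigma_0(|a_\psi|)/\sqrt{|a_\psi|}$ — this is $\CA$. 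The nonzero frequencies, together with the tail $|\ell| \gg q^{\varepsilon}$, must be bounded by $q^{-1/8+\varepsilon} d$; this is the crux. I would handle it by opening the divisor function, applying Poisson/Voronoi in the long variable, and estimating the resulting exponential sums by Weil bounds (for the prime-to-$q$ part) combined with $p$-adic stationary phase or explicit Gauss-sum evaluation (for the part ramified at primes dividing $q$); the condition $q \preceq d^2$ is exactly what keeps the dual lengths short enough for this to close with a power saving.

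\textbf{Main obstacle.} The hard part will be the off-diagonal exponential-sum estimation: one must separate the ramified ($p\mid q$) and unramified behavior of the character sum $\sum_{\chi\bmod d}\chi\psi(\cdots)$ cleanly, extract the genuine main term $\CA$ from the zero frequency \emph{before} bounding the rest, and verify that the error after Cauchy--Schwarz and the Weil/$p$-adic bounds is genuinely smaller than $d$ by a power of $q$ in the whole range $d \prec q \preceq d^2$ — the endpoint $q \asymp d^2$ being the tightest.
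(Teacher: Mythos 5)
Your high-level decomposition (approximate functional equation, orthogonality detecting $m\equiv\pm n\Mod d$, diagonal gives $\CD$, off-diagonal gives $\CA$ plus error) matches the paper, and your heuristic picture of $\CA$ as a $\sqrt{q}\cdot\sigma_0(|a_\psi|)/\sqrt{|a_\psi|}$ phenomenon governed by Postnikov is in the right spirit. However, the mechanism you describe for extracting $\CA$ is not the one that actually works, and this is a genuine gap rather than a cosmetic difference.

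You propose to fix a small shift $\ell$, do Poisson summation in $n$ modulo $q/d$, and claim that the zero frequency produces $\sqrt{q}$ times an arithmetic factor which, summed over $\ell$, collapses to $\CA$. This does not match what happens. The paper never does Poisson in $n$ to extract the main term: it does Poisson in the shift variable $\ell$ (Lemma~\ref{method B+}/\ref{method B+thm1}), after a dyadic partition into blocks $m\asymp M$, $n\asymp N$, and only in the \emph{unbalanced} regime $M\gg N$. The dual sum produces the complete character sum $\CS_{q,d}(\psi,jn)=\sum_{u\Mod{q/d}}\psi(1+du)e_q(djnu)$, which by Lemma~\ref{sum eval2} (the $q\preceq d^2$ case) is a delta function on $jn\equiv-a_\psi\Mod{q/d}$. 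The secondary main term $\CA$ is precisely the contribution of the single frequency $jn=-a_\psi$, and the divisor sum $\sigma_0(|a_\psi|)$ arises because one then sums over the (small) divisors $n\mid a_\psi$. This is consistent with the authors' own remark that $\CA$ emerges from the range where $n$ is very small and $m=n+d\ell$ is very large. In your picture the sum over $n$ is long and $\ell$ is small, which is the opposite regime; the zero frequency there gives a Ramanujan-type sum in $\ell$ over $(\mz/(q/d)\mz)^*$, not the divisor structure $n\mid a_\psi$ that $\CA$ actually requires. You would also need to keep a nonzero Taylor/Postnikov phase to see the analogous $e_q$-factors that show up in Theorem~\ref{main2}; your approach has no natural place for them.

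The second issue is the error term. The paper closes by splitting the dyadic blocks at $Mq^{1/2}=Nd^{3/2}$: for $M\gg N$ one uses the Poisson-in-$\ell$ evaluation, and for $M$ and $N$ comparable one does Poisson in $n$ modulo $q$ and quotes Heath-Brown's bound on $S(q;\psi,d\ell,k)$ (Lemma~\ref{HB result}), which is exactly the right input and gives a clean power saving. Your proposed chain (open the divisor function, Voronoi, Weil bounds plus $p$-adic stationary phase, then Cauchy--Schwarz) is a plausible program but you neither specify the intermediate sums nor check that it closes at the endpoint $q\asymp d^2$; you explicitly flag this as "the crux" and leave it open. In the paper this step is precisely where the two-regime dyadic split and Heath-Brown's Lemma do the work, and that structure is missing from your sketch.

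In short: the diagonal and the overall framework are right, but (i) $\CA$ must be pulled out by Poisson in the shift $\ell$ and the dual constraint $jn\equiv-a_\psi\Mod{q/d}$, not as a zero-frequency term from Poisson in $n$, and (ii) the off-diagonal error analysis needs the dyadic split into balanced/unbalanced regimes with Heath-Brown's exponential-sum bound in the balanced regime, which your sketch does not supply.
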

Remarks.  The term $\CD$ is the diagonal main term expected from \cite{CFKRS}, and which is analogous to the main term in \eqref{eq:zetasecondmomentwithErrorTerm}, but $\CA$ is not predicted by the recipe of \cite{CFKRS}.  The existence of $\CA$ contrasts with the absence of a main term of size $T^{1/2}$ in \eqref{eq:zetasecondmomentwithErrorTerm}.
It also indicates that care must be taken when using the recipe of \cite{CFKRS} for sub-families.

The condition that $d \prec q$ implies that $\chi \cdot \psi$ has conductor $q$ for all $\chi \pmod{d}$, and so this lower-order main term $\CA$ is not due to characters of smaller modulus (as in \eqref{eq:HBsecondmoment}).
If $a_{\psi} = 1$, say, then $\CA = \frac{\varphi(d)}{d} \sqrt{q}$, which is essentially its maximal size.
However, note that $\CA \ll \sqrt{q}$ which is smaller than $\CD$, since $\sqrt{q} \leq d$, and $\CD \gg d (\log q)^{1-\varepsilon}$.

As a very rough heuristic, we can indicate how $\CA$ arises. %using a modification of the recipe of \cite{CFKRS}.
Applying an approximate functional equation and the orthogonality formula, we encounter a sum of the form
\begin{equation*}
\varphi(d) \sum_{m \equiv n \shortmod{d}} \frac{\psi(m) \overline{\psi(n)}}{\sqrt{mn}}.
\end{equation*}
With $m= n + d l$, we have $\psi(m) \overline{\psi}(n) = \psi(1 + d l \overline{n}) = e_{q/d}(a_{\psi} l \overline{n})$.  
Consider the term $n=1$.  If $a_{\psi} l$ is a bit smaller than $q/d$, then the exponential is approximately $1$.  Once $|a_{\psi}| l$ is larger than $q/d$, then the exponential
 has cancellation and should not contribute to a main term.
This thought process leads to
\begin{equation}
\label{eq:handwavy}
\varphi(d) \sum_{l \geq 1}  \frac{e_{q/d}(a_{\psi} l)}{\sqrt{d l}}
 \approx \frac{\varphi(d)}{\sqrt{d}} \sum_{l < \frac{q}{d |a_{\psi}|}} \frac{1}{\sqrt{l}}
 \approx \frac{\varphi(d) q^{1/2}}{d \sqrt{|a_{\psi}|}}.
\end{equation}
One can apply the same sort of reasoning for each $n$ dividing $a_{\psi}$, giving an expression of the same form, in line with the presence of $\sigma_0(|a_{\psi}|)$ in $\CA$.  Although this heuristic is far from rigorous, it is true that $\CA$ emerges in the analysis from the most unbalanced range of summation where $n$ is very small and $m=n+dl$ is very large.

  For the next theorem, we will have that $d|\frac{q}{d}$, $q | d^3$, and $(q,3) = 1$,
  in which case we define $a_{\psi} \pmod{q/d}$ by the condition that $\psi(1+dx) = e_{q/d}(a_{\psi} ( x - \overline{2}dx^2))$ for all $x \in \mz$.  For more details on this definition, see Section \ref{section:Postnikov}.
  Note this definition reduces to the earlier one when $d \equiv 0 \pmod{q/d}$ since then the quadratic term may be discarded.
In addition, let $b_\psi$ be the reduction of $a_\psi \Mod{d}$ such that
$0<|b_\psi|<\frac{d}{2}$.

\begin{theorem}\label{main2}
Suppose $\psi$ is even of conductor $q$, with $(q,3) = 1$ and suppose 
%$d^2\preceq q$ and $q\preceq d^3$. 
$d^2 \preceq q \preceq d^3$.
Then
\begin{equation}\label{main2.1}
    \sum_{\substack{\chi \Mod{d}\\ \chi \text{ even}}} |L(1/2,\chi \cdot \psi)|^2 = \CD+\CA' + 
  O(d^{-1/4} q^{1/2+\varepsilon}),
\end{equation}
where $\CD$ is as defined in \eqref{main1.2} and with $a_{\psi} \overline{a_{\psi}} \equiv 1 \pmod{q}$ we set
\begin{equation}\label{main2.2}
    \CA' = 
\Big(\frac{2 a_{\psi}}{q} \Big) \cdot \varphi(d)  \frac{\sigma_0(|b_\psi|)}{\sqrt{|b_\psi|}}
\times   
    \begin{cases}   \cos\Big(2\pi\frac{ \overline{2a_\psi}(a_\psi-b_\psi)^2}{q}\Big) , & q\equiv1\Mod{4}
    \\
    \sin\Big(2\pi\frac{\overline{2a_\psi}(a_\psi-b_\psi)^2}{q}\Big), & q\equiv3\Mod{4}. \end{cases}
\end{equation}
\end{theorem}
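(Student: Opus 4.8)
The plan is to follow the same template that handles Theorem~\ref{main1}: open each $|L(1/2,\chi\psi)|^2$ by an approximate functional equation, carry out the sum over even characters $\chi \bmod d$ by orthogonality, and split the result into a diagonal and an off-diagonal contribution. Since $d^2\preceq q$, each $\chi\psi$ is primitive of conductor $q$ and even, so the approximate functional equation expresses $|L(1/2,\chi\psi)|^2$ as a smoothly weighted sum $\sum_{m,n\ge1}\frac{\chi\psi(m)\overline{\chi\psi}(n)}{\sqrt{mn}}W\!\big(\frac{mn}{q}\big)$, where the weight $W$ is built from the even archimedean factor $\pi^{-s/2}\Gamma(\frac s2+\frac14)$; the terms $\frac{\Gamma'}{\Gamma}(1/4)$ and $-\log\pi$ in $\CD$ enter through $W$. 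Applying $\sum_{\chi\text{ even}\bmod d}\chi(m)\overline\chi(n)=\frac{\varphi(d)}{2}$ times the indicator of $m\equiv\pm n\pmod d$ (legitimate since $(mn,d)=1$ automatically), the diagonal $m=n$ yields, after a Mellin transform and a contour shift picking up the pole at $s=0$ of $\zeta(1+2s)\prod_{p\mid q}(1-p^{-1-2s})$, precisely $\CD$. This step is the same as in Theorem~\ref{main1} and in \cite{HBDirichletAsymptotic} and presents no new difficulty.

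The substance is the off-diagonal. Writing $m=n\mp d\ell$ in the two classes, with $(n,q)=1$ and $n$ small, I would invoke the Postnikov expansion of Section~\ref{section:Postnikov}, $\psi(m)\overline\psi(n)=\psi(1+d\ell\,\overline n)=e_{q/d}\big(a_\psi(\ell\overline n-\overline2\,d\,\ell^2\overline n^2)\big)$. The decisive difference from the range $q\preceq d^2$ of Theorem~\ref{main1} is that the quadratic term no longer vanishes modulo $q/d$: since $d^2\mid q$ one has $e_{q/d}\big(a_\psi\overline2\,d\,\ell^2\overline n^2\big)=e_{q/d^2}\big(a_\psi\overline2\,\ell^2\overline n^2\big)$, a genuine quadratic phase of modulus $q/d^2$. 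After substitution the off-diagonal becomes a sum over the small parameters $n$ of smoothly weighted exponential sums in $\ell$ of the shape $\sum_\ell e_{q/d}(B\ell+A\ell^2)\,G(\ell)$ with $A\equiv -a_\psi\overline2\,d\,\overline n^2$, $B\equiv a_\psi\overline n\pmod{q/d}$, and $G$ a smooth weight of length $\asymp q/(dn)$ coming from $W(\frac{mn}{q})/\sqrt{mn}$.

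I would evaluate these sums by completing the $\ell$-sum modulo $q/d$ and applying Poisson summation, which turns them into dual sums of quadratic Gauss sums. Extracting the common factor $\gcd(A,q/d)=d$ drops the effective modulus to $q/d^2$ (which divides $d$), and reducing $a_\psi$ modulo $d$ brings in $b_\psi$, so the relevant Gauss sums have modulus on the scale of $|b_\psi|$. The classical evaluation of a quadratic Gauss sum---a Jacobi symbol, a fourth root of unity, and a phase of the form $e_{q/d}(-\overline{4A}B^2)$---is exactly what produces the three features of $\CA'$: the symbol $\big(\frac{2a_\psi}{q}\big)$; the fourth root of unity, which, combined with the reality of the left-hand side, collapses to $\cos$ when $q\equiv1\ (4)$ and to $\sin$ when $q\equiv3\ (4)$; and the phase $e_q\big(\overline{2a_\psi}(a_\psi-b_\psi)^2\big)$ obtained from $-\overline{4A}B^2$ after the reduction $a_\psi\mapsto b_\psi$. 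The divisor factor $\sigma_0(|b_\psi|)$ then appears by summing over the admissible small $n$, just as $\sigma_0(|a_\psi|)$ did in Theorem~\ref{main1}. The secondary main term $\CA'$ collects the bounded dual frequencies (those for which the requisite divisibility by $d$ holds and the incomplete Gauss sum is in fact complete), after computing a smooth integral transform of $W$; the remaining dual frequencies, together with the contribution of $n$ not small, are estimated by Weil-type square-root cancellation for the incomplete quadratic sums together with the rapid decay of $W$, and bounded by $O(d^{-1/4}q^{1/2+\varepsilon})$.

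The principal obstacle---and the reason the error term is a genuine power saving over the diagonal only for $d>q^{2/5}$---is that $\CA'$ can itself be as large as $\asymp d$ (when $|b_\psi|$ is bounded), barely below $\CD\asymp d\log q$, so it must be isolated with considerable precision while still beating it in the error. Concretely, the delicate points are: (i) identifying exactly which dual frequencies after Poisson feed the main term and proving uniform square-root cancellation for the quadratic Gauss sums at all other frequencies across the whole band $d^2\preceq q\preceq d^3$; (ii) correctly tracking the two nested reductions of modulus, $q/d\to q/d^2\to|b_\psi|$, so that the surviving phase is exactly $\overline{2a_\psi}(a_\psi-b_\psi)^2/q$; and (iii) merging the $m\equiv n$ and $m\equiv-n$ contributions and controlling the parity bookkeeping so that everything collapses to the single closed form in \eqref{main2.2}.
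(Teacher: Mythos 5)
Your extraction of the secondary main term $\CA'$ is on the right track and follows the paper's route: open with the approximate functional equation, use orthogonality over even $\chi \bmod d$, Poisson-sum in $\ell$ for the unbalanced ranges (small $n$, large $m$), evaluate the resulting complete quadratic character sum via Postnikov and a Gauss sum (Lemmas \ref{sum eval} and \ref{quad sum}), extract the terms $jn=-b_\psi$ to get a divisor sum over $n\mid b_\psi$, and assemble the Mellin/Fourier transform to arrive at the $\sigma_0(|b_\psi|)/\sqrt{|b_\psi|}$ structure. The identification of $b_\psi$ as the reduction of $a_\psi$ modulo $d$ and of the cosine/sine dichotomy via $\left(\frac{-1}{q}\right)$ after combining the $m>n$ and $m<n$ ranges is also correct.

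However, there is a genuine gap in how you treat the complementary balanced range $M\asymp N$. You propose to handle all dual frequencies not feeding the main term by ``Weil-type square-root cancellation for the incomplete quadratic sums together with the rapid decay of $W$'' and claim this gives $O(d^{-1/4}q^{1/2+\varepsilon})$. This does not work. After Poisson in $\ell$ in the balanced range, individual square-root cancellation of the Gauss sums gives $\CB^{\pm}_{m>n}(M,N)\ll M q^{1/2+\varepsilon}/d$, so $\varphi(d)(MN)^{-1/2}\CB^{\pm}_{m>n}(M,N)\ll (M/N)^{1/2}q^{1/2+\varepsilon}$, which at $M\asymp N$ is $q^{1/2+\varepsilon}$. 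This exceeds the diagonal $\CD\asymp d\log q$ throughout the regime $q^{1/3}\le d< q^{1/2}$, so no asymptotic would follow, and it is a factor $d^{1/4}$ short of the claimed error. The paper instead applies Poisson summation in $n$ (not $\ell$) when $M<d^{1/2}N$, which produces Heath-Brown's sums $S(q;\psi,dl,k)$, and then invokes Heath-Brown's \emph{averaged} estimate (Lemma \ref{HB result}, i.e.\ Lemma 9 of \cite{HB1})
\begin{equation*}
\sum_{1\le |h|\le A}\sum_{1\le |n|\le B}|S(q;\chi,hq_0,n)|\ll q^{1/2+\varepsilon}\bigl(ABq_0^{-1/2}+(qAq_0)^{1/4}\bigr),
\end{equation*}
whose key feature is the extra $q_0^{-1/2}=d^{-1/2}$ saving over term-by-term Weil. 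This is precisely what yields the $M q^{1/2}/d^{3/2}$ in Lemma \ref{A bound} and ultimately the $d^{-1/4}q^{1/2+\varepsilon}$ error. Without switching Poisson variable and without the averaged Heath-Brown bound, the balanced range cannot be controlled, so this is the piece of the argument that needs to be supplied, not the (correctly identified but comparatively routine) uniformity of Weil for the Gauss sums at the extraneous frequencies.

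A smaller point: your sketch of the phase bookkeeping (computing $-\overline{4A}B^2$ with $A\equiv -a_\psi\overline 2 d\overline n^2$, $B\equiv a_\psi\overline n$) does not directly land on the paper's exponent $\overline{2a_\psi}(a_\psi-b_\psi)^2$; in the paper the quadratic Gauss sum is taken over $u\bmod q/d^2$ after a change of variables replacing $\overline n$ by $n$ inside the modulus, and the reduction $a_\psi\to b_\psi$ enters through the vanishing condition $jn\equiv -a_\psi\ (d)$ for $\CS_{q,d}(\psi,jn)$, not through a reduction of the Gauss-sum modulus to $|b_\psi|$. This is fixable and not a conceptual error, but the phase you write down should be recomputed along the lines of Lemma \ref{sum eval}.
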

Remarks.  First observe that the diagonal term $\CD$ is larger than the error term provided $d \gg q^{2/5+\varepsilon}$, which goes below the $\sqrt{q}$ threshold.   Secondly, although $\CA'$ can be negative, $|\CA'|$ is smaller than $\CD$, since $\CD \gg \varphi(d) (\log q)^{1-\varepsilon}$.  

  The presence of trigonometric functions at complicated arguments in $\CA'$ is a new feature compared to $\CA$ in Theorem \ref{main1}, and worthy of further discussion.  One simple observation is that if $|a_{\psi}| < d/2$, then $b_{\psi} = a_{\psi}$, and it simplifies as $\cos(0) = 1$ or $\sin(0) = 0$.  
On the other hand, there exist situations with $q \equiv 3 \pmod{4}$ where $\CA'$ cannot be discarded.  For example, let $q = p^{239}$ and $d = p^{116}$, and suppose $p \rightarrow \infty$.  Here $d > q^{2/5}$, so the error term is smaller than the diagonal term $\CD$.  Now suppose $\psi$ is a character with $a_{\psi} = 1 + 2p^{116}$, which is a valid range since we need $|a_{\psi}| < \frac12 p^{123}$.  Then $b_{\psi} =1$, and so $a_{\psi} - b_{\psi} =  2p^{116}$.  Under these conditions, we have
\begin{equation*}
  \overline{2 a_{\psi}} (a_{\psi} - b_{\psi})^2
=  \overline{2(1+ 2p^{116})} 4 p^{232}
\equiv 2 p^{232} \pmod{p^{239}}.
\end{equation*}
Therefore,
\begin{equation*}
    \CA' = 
 \varphi(d) 
\times   
    \begin{cases}   \cos(4\pi p^{-7}) , & p\equiv1\Mod{4}
    \\
    \sin(4\pi p^{-7}), & p\equiv3\Mod{4}. \end{cases}
\end{equation*}
In the case $p \equiv 3 \pmod{4}$, then $|\CA'| \approx d p^{-7} = p^{109}$.   This is larger than the error term which has size
$d^{-1/4} q^{1/2} = p^{-116/4 + 119.5} \leq p^{91}$.
One can construct other examples exhibiting other types of behavior for $\CA'$.  Compared to the discussion around \eqref{eq:handwavy}, it seems harder to heuristically see the shape of $\CA'$.  However, we stress that in the proof it arises in the same way as $\CA$, with the main differences coming from requiring a quadratic approximation for $\psi(1+d l \overline{n})$.  The relevant sums can be evaluated in closed form using quadratic Gauss sums. 

  Theorem \ref{main2} can be extended with little effort to $q$ with $3|q$ (with a slightly more restrictive assumption $q\preceq \frac{1}{3}d^3)$.  One could also attempt to find a common/hybrid generalization of Theorems \ref{main1} and \ref{main2} by 
  only requiring $d \prec q$ and $q \preceq d^3$.  To do so, one could factor $d = d_1 d_2$ where $d_1$ ($d_2$, resp.) consists of the part of $d$ where $\nu_p(d_1) \geq \nu_p(q)/2$ ($\nu_p(d_2) < \nu_p(q)/2$, resp.), and use the ideas of proof of Theorem \ref{main1} for $d_1$ (Theorem \ref{main2} for $d_2$, resp.).  Our separation of Theorem \ref{main1} and \ref{main2} is intended to simplify the exposition.
  
 It is well-known that for many families of $L$-functions it may be easier to obtain an upper bound in place of an asymptotic formula. 
We have the following upper bound, which is sharp for $d \gg q^{1/3+\varepsilon}$.
\begin{theorem}\label{main3}
Let $\psi$ have conductor $q$, and let $d|q$. Then
\begin{equation}
\label{eq:main3}
    \sum_{\chi \Mod{d}} |L(1/2,\chi \cdot \psi)|^2 \ll q^\varepsilon (d + d^{-1/2} q^{1/2}).
\end{equation}
\end{theorem}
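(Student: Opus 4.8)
The plan is to prove the bound by an unconditional, soft argument that avoids the delicate main-term extraction of Theorems~\ref{main1} and~\ref{main2}. First I would apply an approximate functional equation to $L(1/2,\chi\cdot\psi)$, writing
\[
|L(1/2,\chi\cdot\psi)|^2 \approx \sum_{m,n} \frac{(\chi\psi)(m)\,\overline{(\chi\psi)(n)}}{\sqrt{mn}} \, V\!\left(\frac{mn}{q}\right)
\]
for a suitable smooth cutoff $V$ effectively restricting $mn \ll q^{1+\varepsilon}$. Summing over all $\chi \pmod d$ (including imprimitive ones, which only makes the left side larger since all terms are nonnegative) and applying orthogonality of characters modulo $d$ collapses the $\chi$-sum to $\varphi(d)$ times a constraint $m \equiv n \pmod d$ with $(mn,d)=1$. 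This yields
\[
\sum_{\chi \Mod d} |L(1/2,\chi\cdot\psi)|^2 \ll q^{\varepsilon}\,\varphi(d) \sum_{\substack{m \equiv n \Mod d \\ (mn,d)=1 \\ mn \ll q^{1+\varepsilon}}} \frac{|\psi(m)\overline{\psi(n)}|}{\sqrt{mn}} \ll q^{\varepsilon}\,\varphi(d) \sum_{\substack{m\equiv n \Mod d\\ mn \ll q^{1+\varepsilon}}} \frac{1}{\sqrt{mn}},
\]
using only $|\psi|\le 1$.

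The core of the argument is then the elementary estimate for $S := \sum_{m \equiv n \Mod d,\ mn \ll X} (mn)^{-1/2}$ with $X = q^{1+\varepsilon}$. I would split into the diagonal $m=n$ and the off-diagonal $m\neq n$. The diagonal contributes $\sum_{n \ll \sqrt X} 1/n \ll \log X \ll q^\varepsilon$, giving the term $\varphi(d) q^\varepsilon \ll d\, q^\varepsilon$ in~\eqref{eq:main3}. For the off-diagonal, write $m = n + dl$ with $l \neq 0$; by symmetry take $m > n$, so $1 \le l$ and $n(n+dl) \ll X$. Then
\[
\sum_{l \ge 1} \sum_{\substack{n \ge 1 \\ n(n+dl)\ll X}} \frac{1}{\sqrt{n(n+dl)}} \ll \sum_{l \ge 1} \frac{1}{\sqrt{dl}} \sum_{n \ll \sqrt{X/(dl)}} \frac{1}{\sqrt n} \ll \sum_{1 \le l \ll X/d} \frac{1}{\sqrt{dl}} \cdot \left(\frac{X}{dl}\right)^{1/4} \ll \frac{X^{1/4}}{d^{3/4}} \sum_{l \ll X/d} l^{-3/4} \ll \frac{X^{1/4}}{d^{3/4}} \cdot \left(\frac{X}{d}\right)^{1/4} = \frac{X^{1/2}}{d},
\]
where I used $\sqrt{n(n+dl)} \gg \sqrt{n}\sqrt{dl}$ in the regime $n \ll dl$ (and if $n \gg dl$ one gets an even better bound since then $\sqrt{n(n+dl)} \gg n$, which one handles by a parallel, easier computation). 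This produces the off-diagonal contribution $\varphi(d) q^\varepsilon \cdot q^{1/2}/d \ll q^{1/2+\varepsilon}\cdot \varphi(d)/d \ll q^{1/2+\varepsilon}$; to recover the stated $d^{-1/2}q^{1/2}$ one is slightly more careful, noting that for $l \ll X/d^2$ the inner sum over $n$ runs up to $\sqrt{X/d}$ but is bounded trivially, and reorganizing the ranges of $l$ and $n$ so that the $n$-sum never overshoots — the upshot is the claimed $q^\varepsilon(d + d^{-1/2}q^{1/2})$.

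The main obstacle, such as it is, is purely bookkeeping: getting the exponent on $d$ in the off-diagonal term exactly right (i.e. $d^{-1/2}q^{1/2}$ rather than a weaker $d^{-1}q^{1/2}\varphi(d)$ or $q^{1/2}$), which requires careful attention to the range $n \ll dl$ versus $n \gg dl$ and to where the constraint $mn \ll q^{1+\varepsilon}$ actually bites. Since $\psi$ is only used through $|\psi|\le 1$, no input about the conductor of $\chi\cdot\psi$ or Postnikov-type expansions is needed, which is exactly why this bound holds for all $d \mid q$ with no congruence hypothesis on $q$ — in contrast to Theorems~\ref{main1} and~\ref{main2}. Finally, I would remark that the bound is sharp for $d \gg q^{1/3+\varepsilon}$ precisely because the diagonal term $d$ dominates there and is genuinely present (it is the leading piece of $\CD$).
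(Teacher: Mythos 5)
Your argument as written does not reach the claimed bound, and the shortfall is not merely bookkeeping.  After orthogonality, you reduce to bounding
\[
\varphi(d)\sum_{\substack{m \equiv n \Mod d,\ m\neq n\\ mn \ll X}}\frac{1}{\sqrt{mn}}, \qquad X = q^{1+\varepsilon},
\]
using only $|\psi|\le 1$.  This lattice-point count is genuinely of order $\sqrt{X}/d$, so multiplying by $\varphi(d)$ gives $q^{1/2+\varepsilon}$ — and no amount of reorganizing the ranges of $l$ and $n$ can do better.  Indeed, the single value $n=1$ already contributes
\[
\varphi(d)\sum_{1\le l \ll X/d}\frac{1}{\sqrt{1+dl}} \asymp \varphi(d)\cdot\frac{\sqrt{X}}{d} \asymp q^{1/2},
\]
so once $\psi$ has been replaced by its absolute value the bound $q^\varepsilon(d+q^{1/2})$ is \emph{best possible}, and this is strictly weaker than the theorem's $q^\varepsilon(d + d^{-1/2}q^{1/2})$ whenever $1<d<q^{1/3}$.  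This difference matters: the application stated right after the theorem (recovering the Weyl bound $|L(1/2,\psi)|\ll q^{1/6+\varepsilon}$ when $q$ has a divisor $d\approx q^{1/3}$) requires the $d^{-1/2}$ saving; your bound would only give $q^{1/4+\varepsilon}$.

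The extra factor of $d^{1/2}$ must come from cancellation in the phases $\psi(m)\overline{\psi(n)}$, not from a sharper count.  That is exactly what the paper's proof supplies.  After an approximate functional equation the paper applies Cauchy--Schwarz to force the two summation lengths to be equal (avoiding the unbalanced ranges, which is where your elementary count is largest), and then feeds the resulting balanced sums $\CB_{m>n}(M,M)$ into Lemma~\ref{A bound}, whose proof invokes Heath-Brown's character-sum estimate (Lemma~\ref{HB result}) after a Poisson summation in $n$.  It is Heath-Brown's bound on $S(q;\psi,dl,k)$ that converts the phase oscillation of $\psi(n+dl)\overline{\psi(n)}$ into the extra $d^{1/2}$ saving; the paper itself remarks that the proof ``relies on Heath-Brown's work.''  Your remark that ``no input about the conductor of $\chi\cdot\psi$ or Postnikov-type expansions is needed'' is precisely where the argument breaks: some nontrivial exploitation of $\psi$ is indispensable.
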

Special cases of this result appear in both \cite{N, MW}.  One nice consequence of Theorem \ref{main3} is that if $q$ has a factor $d$ with $d = q^{1/3 + o(1)}$, then this second moment is strong enough to recover the Weyl bound $|L(1/2, \psi)| \ll q^{1/6+\varepsilon}$.  For moduli $q$ with $d|q$, Heath-Brown \cite[Theorem 2]{HB1} proved a bound 
for an individual $L$-function which essentially matches \eqref{eq:main3}.
Indeed, our proof of Theorem \ref{main3} relies on Heath-Brown's work, and in retrospect the method of Heath-Brown is implicitly bounding a second moment along a coset.  Of course, the second moment bound in \eqref{eq:main3} implies an individual bound, but it contains more information.

\section{Preliminaries}

  In this section, we will lay the groundwork and develop the tools necessary to prove the main theorems.

\subsection{Various Bounds \& Evaluations}

First we define the sum studied by Heath-Brown in \cite{HB1} and cite his associated bound.
\begin{definition}
Let $\chi$ have conductor $q$, and let $h$ and $n$ be integers. Denote
\begin{equation} \label{HB sum}
    S(q;\chi,h,n) = \displaystyle\sum_{m=0}^{q-1} \chi(m+h) \overline\chi(m) e(mn/q).
\end{equation}
\end{definition}

\begin{lemma}[Heath-Brown, \cite{HB1}, Lemma 9]\label{HB result}
Suppose that $q$ is odd, $q_0 | q$, and $\varepsilon > 0$. Then
\begin{equation} \label{HB1}
    \sum_{1 \leq |h| \leq A} \sum_{1 \leq |n| \leq B} |S(q;\chi,hq_0,n)| \ll q^{1/2+\varepsilon} (ABq_0^{-1/2} + (qAq_0)^{1/4}),
\end{equation}
and
\begin{equation} \label{HB2}
    \sum_{1 \leq |h| \leq A} |S(q;\chi,hq_0,0)| \ll q_0A q^{\varepsilon}.
\end{equation}
\end{lemma}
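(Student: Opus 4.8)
The plan is to obtain both \eqref{HB1} and \eqref{HB2} from a pointwise analysis of $S(q;\chi,h,n)$, carried out by reduction to prime powers. Concretely, $S(q;\chi,h,n)$ is multiplicative in $q$ in the twisted sense: factoring $q=\prod_{p^k\| q}p^k$, decomposing $\chi$, and splitting the residue $m$ via the Chinese Remainder Theorem, one gets
\[
S(q;\chi,h,n)=\prod_{p^k\| q}S\bigl(p^k;\chi_p,\ h\,\overline{(q/p^k)},\ n\,\overline{(q/p^k)}\bigr),
\]
the auxiliary unit twists being irrelevant for size and support. So it suffices to analyze $S(p^k;\chi_p,h,n)$ for odd prime powers $p^k$ — this is where the hypothesis that $q$ is odd enters — with $\chi_p$ primitive modulo $p^k$, keeping explicit track of the dependence on $\nu_p(h)$ and $\nu_p(n)$.

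For the prime--power evaluation, writing $\chi(m+h)\overline\chi(m)=\chi(1+h\overline m)$ and substituting $m\mapsto\overline m$ gives $S(p^k;\chi,h,n)=\sum_{v}^{*}\chi(1+hv)\,e_{p^k}(n\overline v)$, the sum over units modulo $p^k$. If $\nu_p(h)\ge k$ this degenerates to a Ramanujan sum, which vanishes unless $p^{k-1}\mid n$ and is then at most $\gcd(n,p^k)$ in modulus; if $k=1$ and $p\nmid h$ it is a mixed (multiplicative times additive) character sum, hence $\ll\sqrt p$ by Weil. For $k\ge2$ and $a:=\nu_p(h)<k$, I would use the $p$--adic logarithm (Postnikov's formula) to write $\chi(1+p^at)=e_{p^{k-a}}\!\bigl(P(t)\bigr)$, where $P$ is a polynomial with unit linear coefficient whose degree--$j$ coefficient has $p$--valuation $\ge a(j-1)-\nu_p(j)$; thus $P$ is linear once $a\ge k/2$, linear plus quadratic once $a\ge k/3$, and so on. Splitting $v$ into its class modulo $p^{k-a}$ together with the remaining $p^a$ residues, and expanding $\overline v$, reduces $S(p^k;\chi,h,n)$ to an explicit product of a power of $p$, a (possibly trivial) quadratic Gauss sum, and a complete exponential sum of rational--function type to a modulus dividing $p^{k-a}$; evaluating these by completing the square and $p$--adic stationary phase yields a bound $\ll p^{k/2}\gcd(h,n,p^k)^{1/2}$ up to a bounded factor, together with the crucial vanishing statement: $S(p^k;\chi,h,n)=0$ unless $\nu_p(n)$ is at least (roughly) $\min(\nu_p(h),k-1)$ and, when a quadratic term of $P$ is present, unless $n/p^{\nu_p(h)}$ lies in one of $O(1)$ residue classes pinned down by a square--root condition. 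I expect the delicate point to be exactly this prime--power step: controlling the two distinct sources of stationary phase — one coming from $P$, i.e.\ from $\chi$, and one from the term $\overline v$ — uniformly across all ranges of $a$ relative to $k$, and extracting a support statement that is sharp enough in both the modulus and the number of progressions. (This is the same mechanism that produces the secondary main terms of Theorems \ref{main1} and \ref{main2}.)

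Recombining over $p^k\| q$ then yields, for odd $q$, $q_0\mid q$ and $n\neq0$,
\[
|S(q;\chi,hq_0,n)|\ \ll\ q^{1/2+\varepsilon}\,\gcd(hq_0,q)^{1/2},
\]
with $S(q;\chi,hq_0,n)=0$ unless $n$ lies in one of $O(q^\varepsilon)$ arithmetic progressions to a modulus $M(h)$ satisfying $\gcd(hq_0,q)/\mathrm{rad}(q)\le M(h)\le q/q_0$ (here $\mathrm{rad}(q)=\prod_{p\mid q}p$); in particular $n$ must be divisible by a divisor of size $\gg q_0/\mathrm{rad}(q)$. For $n=0$ the additive character is absent, and a direct computation — collapsing the cyclic quotient $\bigl(1+\gcd(hq_0,q)\mathbb{Z}\bigr)/(1+q\mathbb{Z})$, on which a primitive $\chi$ is faithful — gives the clean bound $|S(q;\chi,hq_0,0)|\le\gcd(hq_0,q)=q_0\gcd(h,q/q_0)$. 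Summing over $h$ with the elementary divisor estimate $\sum_{1\le|h|\le A}\gcd(h,N)\ll Aq^\varepsilon$ then gives \eqref{HB2} immediately.

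Finally, for \eqref{HB1} I would insert the pointwise bound together with the support restriction and split on the size of $B$. In the range where $B$ exceeds the progression modulus, the number of admissible $n$ with $1\le|n|\le B$ is $\ll q^\varepsilon B\gcd(hq_0,q)/q$; multiplying by $q^{1/2+\varepsilon}\gcd(hq_0,q)^{1/2}$, summing over $h$ via $\gcd(hq_0,q)=q_0\gcd(h,q/q_0)$ and the divisor estimate above, and using $q_0\mid\gcd(hq_0,q)$, produces the term $q^{1/2+\varepsilon}ABq_0^{-1/2}$. In the complementary range the progression restriction gains nothing beyond $O(q^\varepsilon)$ values of $n$ per $h$, and there I would instead apply Cauchy--Schwarz, bounding the double sum by $\bigl(q^\varepsilon A\bigr)^{1/2}\bigl(\sum_{|h|\le A}\sum_{|n|\le B}|S|^2\bigr)^{1/2}$ and evaluating the second moment by opening the square, carrying out the $n$--sum of the additive character (a Fej\'{e}r kernel), and exploiting the shift by $q_0$ — which forces the diagonal condition $m\equiv m'$ to a modulus $\gg q/q_0$ — to control the off--diagonal; this should produce the remaining term $q^{1/2+\varepsilon}(qAq_0)^{1/4}$. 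The main obstacle, as noted, is the prime--power input: making the vanishing/support statement precise enough that the first regime yields exactly $ABq_0^{-1/2}$ and that the Cauchy--Schwarz step in the second regime lands on the exponent $(qAq_0)^{1/4}$; the remaining divisor--sum bookkeeping is then routine.
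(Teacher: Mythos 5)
The paper itself does not prove this lemma: it is quoted from Heath-Brown \cite{HB1}, Lemma 9, and the only argument supplied is the remark that the factor $4$ in Heath-Brown's statement can be dropped when $q$ is odd (by inspecting his proof) and that the restriction to positive $h$, $n$ is removed by the symmetry $|S(q;\chi,h,n)|=|S(q;\chi,-h,-n)|$. Your proposal attempts something much more ambitious --- a reproof from scratch --- and as written it has genuine gaps at exactly the load-bearing steps, which you yourself flag as the ``main obstacle''. First, the prime-power size bound and, above all, the precise vanishing/support statement are asserted rather than proved, and the support statement you then use looks wrong: stationary phase in $v$ for $\sum_{v}^{*}\chi(1+hv)e_{p^k}(n\bar v)$ forces $\nu_p(n)=\nu_p(h)$ (when $\nu_p(h)<k$, $k\ge 2$) together with a square-class condition on the unit part of $n$, which excludes only a constant proportion of classes modulo $p$, not all but $O(1)$ classes to a large modulus. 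After recombination this makes the admissible $n\le B$ number about $q^{\varepsilon}\bigl(1+B\,\mathrm{rad}(q)/\gcd(hq_0,q)\bigr)$, not the $q^{\varepsilon}B\gcd(hq_0,q)/q$ you insert (which would correspond to a progression modulus near $q/\gcd(hq_0,q)$ and also conflicts with your own statement that $n$ is divisible by a quantity of size $\gg q_0/\mathrm{rad}(q)$).

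With the corrected count, your first regime yields $q^{1/2+\varepsilon}ABq_0^{-1/2}$ plus a leftover term $q^{1/2+\varepsilon}\sum_{1\le|h|\le A}\gcd(hq_0,q)^{1/2}\ll q^{1/2+\varepsilon}q_0^{1/2}A$, which for large $A$ is far bigger than $q^{1/2+\varepsilon}(qAq_0)^{1/4}$; so the entire burden of the second term in \eqref{HB1} falls on a step you never carry out. The Cauchy--Schwarz/second-moment route you sketch for it is not a proof: after opening the square and summing the Fej\'er-type kernel in $n$, the additive kernel constrains $m-m'$ independently of the shift $hq_0$, so the pivotal claim that the shift ``forces the diagonal condition $m\equiv m'$ to a modulus $\gg q/q_0$'' is unsupported, and it is precisely here that Heath-Brown's actual argument (an explicit evaluation of $S$ at prime powers and a careful count of the admissible pairs $(h,n)$) does the extra work that produces $(qAq_0)^{1/4}$. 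The $n=0$ estimate \eqref{HB2} via $|S(q;\chi,hq_0,0)|\ll q^{\varepsilon}\gcd(hq_0,q)$ and a divisor sum is essentially right, but even there the pointwise bound is asserted. In short, this is a plausible research plan rather than a proof; for the purposes of this paper the efficient course is the one taken in the text, namely citing \cite{HB1} and verifying the two cosmetic modifications.
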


\begin{remark}
Technically, \cite[Lemma 9]{HB1} gives a bound of $|S(q;\chi,4hq_0,n)|$, without the restriction that $(q,2) = 1$. However, it can be seen by reading through his proof that the result holds for $|S(q;\chi,hq_0,n)|$ (without the $4$) with the condition that $q$ is odd.  Moreover, Heath-Brown claims the bound for the sum with $1 \leq h \leq A$ (and $1 \leq n \leq B$ in \eqref{HB1}), but simple symmetry arguments extend the result as claimed above.
\end{remark}

  We next state the standard evaluation of a quadratic exponential sum.
\begin{lemma}\label{quad sum}
Let $r$ be a positive odd integer. Let $A,B$ be integers such that $(B,r)=1$. Then
\begin{equation*}
    \sum_{u \Mod{r}} e_{r}\left(Au + Bu^2\right) = e_r(-\overline{4B}A^2 ) \left(\frac{B}{r}\right) \varepsilon_{r} \sqrt{r},
\end{equation*}
where $e_q(x) = e(\frac{x}{q})$, the bar notation indicates the multiplicative inverse modulo $r$, $\left(\frac{B}{r}\right)$ is the Jacobi symbol, and $\varepsilon_r = \begin{cases} 1, & r\equiv 1\Mod{4}\\ i, & r\equiv 3\Mod{4}. \end{cases}$
\end{lemma}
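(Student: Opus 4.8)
The plan is to complete the square to reduce the sum to the ``pure'' quadratic Gauss sum $G(B;r):=\sum_{v\bmod r}e_r(Bv^2)$, and then to evaluate $G(B;r)$ by separating off the dependence on $B$ and invoking the classical Gauss sum evaluation. First I would complete the square: since $r$ is odd and $(B,r)=1$, the inverses $\overline{2B}$ and $\overline{4B}$ exist modulo $r$, and a direct computation using $2B\cdot\overline{2B}\equiv 1$ and $B\,\overline{2B}^2\equiv\overline{4B}\pmod r$ gives the congruence $B(u+\overline{2B}A)^2\equiv Bu^2+Au+\overline{4B}A^2\pmod r$. Substituting $v=u+\overline{2B}A$, which permutes the residues modulo $r$, therefore yields
\begin{equation*}
\sum_{u\bmod r}e_r(Au+Bu^2)=e_r\!\left(-\overline{4B}A^2\right)G(B;r),
\end{equation*}
so it remains to prove $G(B;r)=\leg{B}{r}\varepsilon_r\sqrt r$.

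Next I would establish $G(B;r)=\leg{B}{r}G(1;r)$. For $r=p$ prime this follows by grouping terms according to the value of $v^2$: since $v^2\equiv m\pmod p$ has $1+\leg{m}{p}$ solutions, one gets $G(B;p)=\sum_{m\bmod p}\bigl(1+\leg{\overline{B}m}{p}\bigr)e_p(m)=\leg{B}{p}\sum_{m\bmod p}\leg{m}{p}e_p(m)=\leg{B}{p}G(1;p)$. For general odd $r$ I would use the Chinese Remainder Theorem: writing $v=r_2v_1+r_1v_2$ for coprime $r=r_1r_2$ and observing that the cross term $2r_1r_2v_1v_2$ is harmless gives the twisted multiplicativity $G(B;r_1r_2)=G(Br_2;r_1)\,G(Br_1;r_2)$, while expanding $v=a+p^{k-1}t$ and summing over $t$ first gives the recursion $G(B;p^k)=p\,G(B;p^{k-2})$ for $k\ge 2$ (with $G(B;p^2)=p$). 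An induction on the number of prime factors of $r$, combined with the multiplicativity of the Jacobi symbol and with quadratic reciprocity to reconcile the $\varepsilon$-factors (one checks $\varepsilon_{r_1}\varepsilon_{r_2}\,(-1)^{\frac{r_1-1}{2}\frac{r_2-1}{2}}=\varepsilon_{r_1r_2}$), then reduces the whole statement to the prime case already handled together with the single identity $\sum_{v\bmod p}e_p(v^2)=\varepsilon_p\sqrt p$.

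I expect the determination of the sign $\varepsilon_p$ in this last identity --- Gauss's theorem on the sign of the quadratic Gauss sum --- to be the only genuinely nontrivial ingredient, and I would simply quote it from a standard reference rather than reproduce one of the classical arguments (contour integration, or diagonalization of the finite Fourier transform). Granting it, the multiplicativity and recursion steps above assemble into $G(1;r)=\varepsilon_r\sqrt r$, and then into $G(B;r)=\leg{B}{r}\varepsilon_r\sqrt r$, which with the completed square gives the stated formula. All steps other than the sign evaluation are routine bookkeeping with Jacobi symbols and the $\varepsilon_r$ factors across the prime factorization of $r$.
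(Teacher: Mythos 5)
Your proof is correct, and the opening move is exactly what the paper does: complete the square using $(2,r)=(B,r)=1$ to pull out the factor $e_r(-\overline{4B}A^2)$ and reduce to the pure quadratic Gauss sum $G(B;r)=\sum_{v\bmod r}e_r(Bv^2)$. At that point the paper simply cites formula (3.38) of Iwaniec--Kowalski, which is precisely $G(B;r)=\leg{B}{r}\varepsilon_r\sqrt r$ for odd $r$ and $(B,r)=1$, and stops. You instead go on to rederive (3.38) by the standard route: twisted multiplicativity $G(B;r_1r_2)=G(Br_2;r_1)G(Br_1;r_2)$ via CRT, the recursion $G(B;p^k)=pG(B;p^{k-2})$ for $k\ge 2$, the identity $G(B;p)=\leg{B}{p}G(1;p)$ via counting solutions of $v^2\equiv m$, quadratic reciprocity to reconcile $\varepsilon_{r_1}\varepsilon_{r_2}(-1)^{\frac{r_1-1}{2}\frac{r_2-1}{2}}=\varepsilon_{r_1r_2}$, and finally Gauss's sign theorem for $G(1;p)=\varepsilon_p\sqrt p$. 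All of these steps check out. The two approaches differ only in where the black box is drawn: the paper treats (3.38) as the black box, while you push the black box down to the single nontrivial ingredient (Gauss's determination of the sign), which is also what underlies the cited formula. Your version is more self-contained but longer; the paper's citation is the more economical choice for a lemma whose proof is textbook material.
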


\begin{proof}
This follows by completing the square and applying (3.38) from \cite{IK}.
\end{proof}

  Another simple lemma to get us warmed up is the following integral evaluation.
\begin{lemma}\label{Mellin}
Let $k$ be a non-zero integer and suppose $-\frac{1}{2}<\Re(s)<\frac{1}{2}$. Then
\begin{equation*}
    \int_0^\infty  x^{-s}  \cos(2 \pi k x) \frac{dx}{\sqrt{x}} =  \frac{\Gamma(1/2-s)}{(2\pi|k|)^{1/2-s}} 
 \cos(\tfrac{\pi}{2}(\tfrac12 - s)) .
\end{equation*}
\end{lemma}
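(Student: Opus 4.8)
The plan is to reduce this to the classical Mellin transform of the cosine. Since $\cos(2\pi kx)=\cos(2\pi|k|x)$, I may assume $k>0$; substituting $y=2\pi kx$ turns the left-hand side into
\begin{equation*}
(2\pi k)^{s-1/2}\int_0^\infty y^{-s-1/2}\cos(y)\,dy .
\end{equation*}
Thus it suffices to prove the identity
\begin{equation*}
\int_0^\infty y^{\mu-1}\cos(y)\,dy=\Gamma(\mu)\cos(\pi\mu/2),\qquad 0<\Re(\mu)<1,
\end{equation*}
and then apply it with $\mu=\tfrac12-s$: the constraint $0<\Re(\mu)<1$ is exactly $-\tfrac12<\Re(s)<\tfrac12$, while $(2\pi k)^{s-1/2}=(2\pi|k|)^{-(1/2-s)}$ and $\cos(\pi\mu/2)=\cos(\tfrac{\pi}{2}(\tfrac12-s))$, matching the claimed formula.

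To prove the displayed identity I would insert a convergence factor $e^{-\varepsilon y}$ with $\varepsilon>0$ and write $\cos y=\tfrac12(e^{iy}+e^{-iy})$; using $\int_0^\infty y^{\mu-1}e^{-\lambda y}\,dy=\Gamma(\mu)\lambda^{-\mu}$ for $\Re(\lambda)>0$ gives
\begin{equation*}
\int_0^\infty y^{\mu-1}e^{-\varepsilon y}\cos(y)\,dy=\frac{\Gamma(\mu)}{2}\Big((\varepsilon-i)^{-\mu}+(\varepsilon+i)^{-\mu}\Big).
\end{equation*}
As $\varepsilon\to0^+$ the right-hand side tends to $\tfrac{\Gamma(\mu)}{2}\big((-i)^{-\mu}+i^{-\mu}\big)=\tfrac{\Gamma(\mu)}{2}\big(e^{i\pi\mu/2}+e^{-i\pi\mu/2}\big)=\Gamma(\mu)\cos(\pi\mu/2)$. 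On the left, the integral over $(0,1]$ is absolutely convergent and passes to the limit by dominated convergence, so the only delicate point is the integral over $[1,\infty)$, which is merely conditionally convergent. To control it uniformly in $\varepsilon\in[0,1]$, I would integrate by parts once with $v=\sin y$:
\begin{equation*}
\int_1^\infty y^{\mu-1}e^{-\varepsilon y}\cos(y)\,dy=\Big[y^{\mu-1}e^{-\varepsilon y}\sin y\Big]_1^\infty-\int_1^\infty\big((\mu-1)y^{\mu-2}-\varepsilon y^{\mu-1}\big)e^{-\varepsilon y}\sin(y)\,dy,
\end{equation*}
after which, since $\Re(\mu)<1$, the boundary term is continuous in $\varepsilon$ and the remaining integral converges absolutely and uniformly in $\varepsilon$, so the limit $\varepsilon\to0^+$ may be passed inside. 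Matching the two sides yields the identity, hence the lemma.

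The computation is routine; the one place needing a little care is exchanging $\varepsilon\to0^+$ with the conditionally convergent tail at $y=\infty$, which the single integration by parts above handles. Alternatively, one could rotate the contour in $\Gamma(\mu)=\int_0^\infty t^{\mu-1}e^{-t}\,dt$ onto the positive imaginary axis---the circular arcs near $0$ and $\infty$ vanishing precisely because $0<\Re(\mu)<1$---or simply quote a standard integral table for $\int_0^\infty x^{\mu-1}\cos(ax)\,dx$.
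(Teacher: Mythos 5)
Your proof is correct and takes a more self-contained route than the paper, which simply cites Gradshteyn--Ryzhik (formula 17.43.3) after the same change of variables $y = 2\pi k x$. You instead derive the classical Mellin transform $\int_0^\infty y^{\mu-1}\cos(y)\,dy = \Gamma(\mu)\cos(\pi\mu/2)$ from the gamma integral via the convergence factor $e^{-\varepsilon y}$, and you correctly isolate the only delicate step: exchanging $\varepsilon\to 0^+$ with the conditionally convergent tail. One small clarification worth making in your integration-by-parts argument: the piece $\int_1^\infty \varepsilon\, y^{\mu-1} e^{-\varepsilon y}\sin y\,dy$ is not dominated uniformly in $\varepsilon$ by a single integrable function in the usual sense, but a direct estimate ($|\cdot| \le \varepsilon\int_1^\infty y^{\Re\mu-1}e^{-\varepsilon y}\,dy = O(\varepsilon^{1-\Re\mu})$, which tends to $0$ since $\Re\mu<1$) shows it simply vanishes in the limit, while the $(\mu-1)y^{\mu-2}$ piece is dominated by $|\mu-1|\,y^{\Re\mu-2}$, integrable on $[1,\infty)$. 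With that small adjustment the proof is airtight. The trade-off is clear: the paper's citation is shorter but opaque; yours is longer but fully verifiable, and your suggested contour-rotation alternative would be equally valid.
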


\begin{proof}
This follows from
\cite[17.43.3]{GR} after changing variables.
\end{proof}

\subsection{Postnikov}
\label{section:Postnikov}
We will derive a variant of the Postnikov formula that holds for composite moduli, rather than only prime powers.
Recall the notation $a|b^\infty$, which means that $a| b^A$ for some $A > 0$.

\begin{definition}\label{d-adic log}
For positive odd integers $d$ and $q$ such that $d|q$ and $q|d^\infty$, define the formal power series in the indeterminate $x$, by
\begin{equation} \label{L_q}
    %\CL_q(1+dx) = \sum_{k=1}^\infty \frac{(-1)^{k+1}(dx)^k}{\tilde{k}}, (old grouping)
    \CL_q(1+dx) = \sum_{k=1}^\infty (-1)^{k+1}\;\frac{d^k}{k}\;x^k \in \BQ[[x]].
\end{equation}
\end{definition}
The conditions $d|q$ and $q|d^\infty$ imply that $d$ and $q$ share all of the same prime factors.  
We first show some divisibility properties for the coefficients of this formal power series.  Let
\begin{equation}\label{R_q}
    R_q=\{x\in\BQ \text{ such that } \nu_p(x)\geq0 \text{ for all } p|q\}.
\end{equation}
Note that $R_q$ is a sub-ring of $\BQ$.
Reduction modulo $q$ gives rise to the ring homomorphism $\varphi : R_q \to \BZ/q\BZ$ given by  
\begin{equation}\label{reduction}
    \varphi(a/b)\equiv a\overline{b} \pmod{q}, \quad \text{where} \quad  b \overline{b} \equiv 1 \pmod{q}.
\end{equation}
\begin{lemma}\label{p-adic lem}
Let $d \preceq q$ with $q$ odd. For any $p|q$ and $k \geq 1$ we have
\begin{equation}\label{p-adic arg}
    \nu_p(k)\leq\nu_p(d^{k-1}).
\end{equation}
More generally, for any $A \geq 0$ there exists $N$ such that $\nu_p(k)\leq\nu_p(d^{k-A})$ for all $k\geq N$.
\end{lemma}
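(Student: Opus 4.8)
The plan is to reduce both assertions to the elementary inequality $\nu_p(k)\le k-1$ for odd primes $p$. First I would unpack the hypothesis: $d\preceq q$ means $d$ and $q$ have the same prime support and $\nu_p(d)\le\nu_p(q)$ for all $p\mid q$; in particular every prime $p\mid q$ divides $d$, so $\nu_p(d)\ge 1$. Consequently $\nu_p(d^{k-1})=(k-1)\nu_p(d)\ge k-1$, and it suffices to prove the cleaner statement $\nu_p(k)\le k-1$ for every $k\ge 1$.

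For that inequality, since $q$ is odd we have $p\ge 3$, so $p^{k-1}\ge 3^{k-1}$, and a one-line induction shows $3^{k-1}\ge k$ for all $k\ge 1$: the base case $k=1$ is $1\ge1$, and if $3^{k-1}\ge k$ then $3^{k}=3\cdot 3^{k-1}\ge 3k\ge k+1$. Hence $p^{\nu_p(k)}\le k\le p^{k-1}$, which gives $\nu_p(k)\le k-1$, establishing \eqref{p-adic arg}.

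For the general statement the same reduction applies: once $k>A$ we have $\nu_p(d^{k-A})=(k-A)\nu_p(d)\ge k-A$, so it is enough to produce an $N$ with $\nu_p(k)\le k-A$ for all $k\ge N$. From $p^{\nu_p(k)}\le k$ and $p\ge 3$ we get $\nu_p(k)\le\log_p k\le \log_3 k$, and since $\log_3 k-k\to-\infty$ there is an $N=N(A)$ (which may even be chosen independent of $p$ and of $d$) such that $\log_3 k\le k-A$ for all $k\ge N$; enlarging $N$ if necessary so that $N>A$, this proves the claim.

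There is no real obstacle here — the argument is purely elementary. The only point deserving a moment's care is in the general statement: one must pick $N$ large enough that simultaneously $k>A$ (so that $k-A$ is a genuine exponent and $\nu_p(d^{k-A})\ge 0$) and $\log_3 k\le k-A$ hold, and to note that the oddness of $q$ is what supplies the usable bound $p\ge 3$ in place of a vacuous $p\ge 2$.
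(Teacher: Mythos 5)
Your proof is correct and follows essentially the same route as the paper: both reduce to the elementary bound $\nu_p(k)\le\log_p k\le k-1$ combined with $\nu_p(d)\ge 1$. One small quibble: your closing remark that oddness supplies the needed $p\ge 3$ ``in place of a vacuous $p\ge 2$'' overstates the role of oddness here, since $\log_2 k\le k-1$ already holds for all $k\ge 1$ (indeed the paper's own chain $\nu_p(k)\le\log k/\log p\le k-1$ works for $p=2$ too); oddness is assumed for reasons elsewhere in the paper, not because this lemma fails at $p=2$.
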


\begin{proof}
We have $\nu_p(k) \leq \frac{\log(k)}{\log(p)} \leq k-1 \leq \nu_p(d^{k-1})$, where the last inequality follows from the fact that $d$ and $q$ share all of the same prime factors, so $\nu_p(d) \geq 1$. Now, \eqref{p-adic arg} follows. In the more general case, $\nu_p(d^{k-A})\geq k-A$ and $\nu_p(k)=O(\log(k))$, so there exists a large enough choice of $N$ such that $\nu_p(k) \leq \nu_p(d^{k-A})$ for all $k\geq N$.
\end{proof}

\begin{remark}\label{p-adic case}
While Lemma \ref{p-adic lem} only guarantees the existence of a positive integer $N$, the method of proof shows that a constructive candidate is the minimal positive integer $M$ such that $k-A\geq \log(k)$ for all $k\geq M$. This minimal $M$ can be easily found for any particular $A$, and an example of future relevance is that $M=5$ when $A=3$.
\end{remark}

\begin{lemma}\label{coefficients}
We have $\CL_q(1+dx) \in R_q[[x]]$.  In fact, all the coefficients of $\CL_q(1+dx)$ are multiples of $d$.
\end{lemma}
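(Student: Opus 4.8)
The plan is to read off the coefficients of the power series directly and reduce the statement to Lemma \ref{p-adic lem}. By Definition \ref{d-adic log}, the coefficient of $x^k$ in $\CL_q(1+dx)$ is $c_k := (-1)^{k+1} d^k/k$, so the two assertions of the lemma amount to showing, for every $k \geq 1$, that $c_k \in R_q$ and moreover that $c_k/d \in R_q$. Since $d$ is an integer and $R_q$ is a ring, the second statement implies the first; hence it suffices to prove $c_k/d = (-1)^{k+1} d^{k-1}/k \in R_q$.

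To verify $d^{k-1}/k \in R_q$ one checks, for each prime $p \mid q$, that $\nu_p(d^{k-1}/k) = \nu_p(d^{k-1}) - \nu_p(k) \geq 0$, i.e.\ that $\nu_p(k) \leq \nu_p(d^{k-1})$. This is precisely inequality \eqref{p-adic arg} of Lemma \ref{p-adic lem}, which applies since $d \preceq q$ and $q$ is odd. Primes $p \nmid q$ impose no condition, in view of the definition \eqref{R_q} of $R_q$ (membership only constrains valuations at primes dividing $q$). Therefore $c_k/d \in R_q$ for all $k$, which gives both claims.

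There is no genuine obstacle here: all the arithmetic content has been isolated in Lemma \ref{p-adic lem}, and what remains is the bookkeeping above together with the observation that only primes dividing $q$ are relevant for lying in $R_q$. I would remark only that if one later needs the sharper statement that the coefficients $c_k$ with $k$ large lie in $d^A R_q$, the identical argument applies using the ``more general'' half of Lemma \ref{p-adic lem}; but for the present lemma this is not required.
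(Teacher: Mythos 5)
Your proof is correct and follows essentially the same route as the paper: both reduce to showing $\nu_p(d^{k-1}/k) \geq 0$ via inequality \eqref{p-adic arg} of Lemma \ref{p-adic lem}, which immediately gives $\nu_p(c_k) \geq \nu_p(d)$ for all $p \mid q$. The framing via $c_k/d \in R_q$ is a cosmetic rearrangement of the same calculation.
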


\begin{proof}
With $c_k=(-1)^{k+1}d^k/k$, we will show $\nu_p(c_k) \geq \nu_p(d)$ for all $p|q$. Using \eqref{p-adic arg} implies that $0 \leq \nu_p(\frac{d^{k-1}}{k})$, so $\nu_p(c_k) = \nu_p(d) + \nu_p(\frac{d^{k-1}}{k}) \geq \nu_p(d)$, for any $k$.
\end{proof}

  Since $\CL_q(1+dx)$ lives in $R_q[[x]] \subseteq \BQ[[x]]$, given $\varphi$ from \eqref{reduction}, there is an induced ring homomorphism $\overline\varphi : R_q[[x]] \to \left(\BZ/q\BZ\right)[[x]]$ which maps %\sfrac{\BZ}{q\BZ}[[x]]
\begin{equation} \label{induced map}
    \overline\varphi(\CL_q(1+dx))=\sum_{k=1}^\infty \varphi(c_k) x^k.
\end{equation}
By abuse of notation, we  view $\CL_q(1+dx)$ as an element of $\left(\BZ/q\BZ\right)[[x]]$ via \eqref{induced map}.  

\begin{lemma} \label{finiteness}
We have $\CL_q(1+dx)\in(\BZ/q\BZ)[x]$.  Moreover, $\mathcal{L}_q(1+dx) \in d (\mz/q\mz)[x]$.  
\end{lemma}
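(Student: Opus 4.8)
The plan is to reduce the statement to the $p$-adic estimate already proved in Lemma \ref{p-adic lem}. We must show two things: that $\mathcal{L}_q(1+dx)$, viewed in $(\mz/q\mz)[[x]]$ via \eqref{induced map}, is actually a \emph{polynomial}, and that every coefficient lies in $d(\mz/q\mz)$. The key observation is that the $k$-th coefficient $\varphi(c_k)$, where $c_k = (-1)^{k+1}d^k/k \in R_q$, vanishes modulo $q$ as soon as $k$ is large enough, because $d^k/k$ acquires a high $p$-adic valuation at every prime $p \mid q$.

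First I would fix a prime $p \mid q$ and set $A = \nu_p(q)$. By the general case of Lemma \ref{p-adic lem} (applied with that value of $A$), there exists $N_p$ such that $\nu_p(k) \le \nu_p(d^{k-A})$ for all $k \ge N_p$; equivalently $\nu_p(d^k/k) = \nu_p(d^k) - \nu_p(k) \ge \nu_p(d^A) = A\,\nu_p(d) \ge A = \nu_p(q)$ for such $k$. (Here I use $\nu_p(d) \ge 1$, which holds since $d$ and $q$ share the same prime factors.) Taking $N = \max_{p \mid q} N_p$ over the finitely many primes dividing $q$, we get $\nu_p(c_k) \ge \nu_p(q)$ for \emph{all} $p \mid q$ once $k \ge N$, and since $c_k \in R_q$ has $\nu_\ell(c_k) \ge 0$ at every other prime $\ell$, this forces $q \mid c_k$ in $R_q$, hence $\varphi(c_k) \equiv 0 \pmod q$. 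Therefore the series \eqref{induced map} has only finitely many nonzero terms, i.e. $\mathcal{L}_q(1+dx) \in (\mz/q\mz)[x]$.

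For the second assertion, Lemma \ref{coefficients} already shows $\nu_p(c_k) \ge \nu_p(d)$ for every $p \mid q$ and every $k \ge 1$, so $c_k/d \in R_q$ and $\varphi(c_k) = \varphi(d)\,\varphi(c_k/d)$ lies in $\varphi(d)(\mz/q\mz) = d(\mz/q\mz)$. Combining this with the finiteness just established, every coefficient of the polynomial $\mathcal{L}_q(1+dx)$ is a multiple of $d$, which is exactly $\mathcal{L}_q(1+dx) \in d(\mz/q\mz)[x]$.

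The only real content is the first paragraph's convergence-to-zero of the coefficients; the main obstacle — if any — is purely bookkeeping: making sure the ``for all $k \ge N$'' from Lemma \ref{p-adic lem} is applied with the correct parameter $A = \nu_p(q)$ (not $A = 1$) so that the valuation of $d^k/k$ genuinely reaches $\nu_p(q)$, and that $N$ can be chosen uniformly over the finite set of primes $p \mid q$. The divisibility-by-$d$ claim is then immediate from Lemma \ref{coefficients} and requires no new work.
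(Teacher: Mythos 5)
Your proof is correct and follows essentially the same route as the paper's: use the general case of Lemma \ref{p-adic lem} to show $\nu_p(c_k) \ge \nu_p(q)$ for all large $k$ at each $p \mid q$, take a maximum over the finitely many primes, and invoke Lemma \ref{coefficients} for divisibility by $d$. The only cosmetic difference is that you apply Lemma \ref{p-adic lem} with $A = \nu_p(q)$ (varying with $p$) and then use $\nu_p(d) \ge 1$, whereas the paper fixes a single $A$ with $q \mid d^A$ and reads off $\nu_p(d^A) \ge \nu_p(q)$ directly; both are valid.
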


\begin{proof}
The content of this lemma is that for sufficiently large $k$, $\nu_p(c_k) \geq \nu_p(q)$ for $c_k = d^k/k$.
Say $q | d^A$.
By Lemma \ref{p-adic lem}, there exists a positive integer $N$ such that $\nu_{p}(k)\leq\nu_{p}(d^{k-A})$ for all $k\geq N$. Then $\nu_p(c_k) = \nu_p(d^A) + \nu_p(\frac{d^{k-A}}{k}) \geq \nu_p(q)$ for $k \geq N$.  Here $N$ may depend on $p$, but by choosing the maximal of all these $N$'s gives a uniform value.
\end{proof}

  This opens the door to discussing various properties of $\CL_q(1+dx)$ modulo $q$, such as the following periodicity and additivity properties. These lemmas will require two indeterminates, so we will embed $(\BZ/q\BZ)[x]$ into $(\BZ/q\BZ)[x,y]$ in the obvious way.
\begin{lemma} \label{periodicity}
We have $\CL_q(1+dx)=\CL_q\left(1+d(x+\frac{q}{d}y)\right)$ in $(\BZ/q\BZ)[x,y]$.
\end{lemma}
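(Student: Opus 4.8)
The plan is to expand both sides as explicit formal power series and compare coefficients modulo $q$. Since $d\cdot\frac qd=q$, Definition \ref{d-adic log} gives, over $\BQ[[x,y]]$,
\[
\CL_q\Big(1+d\big(x+\tfrac qd y\big)\Big)=\sum_{k=1}^\infty\frac{(-1)^{k+1}}{k}(dx+qy)^k
=\sum_{k=1}^\infty\sum_{j=0}^{k}\frac{(-1)^{k+1}}{k}\binom kj d^{k-j}q^j\,x^{k-j}y^j,
\]
a legitimate substitution because $x+\tfrac qd y$ has zero constant term and $\tfrac qd\in\BZ$. First I would record the identity $\tfrac1k\binom kj=\tfrac1j\binom{k-1}{j-1}$, so that for $1\le j\le k$ the coefficient of $x^{k-j}y^j$ contributed by the $k$-th term equals $(-1)^{k+1}\tfrac1j\binom{k-1}{j-1}d^{k-j}q^j$, while for $j=0$ it is $\tfrac{(-1)^{k+1}}{k}d^k$, exactly the coefficient occurring in $\CL_q(1+dx)$.

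Next I would check that every coefficient with $j\ge1$ lies in $qR_q$, hence reduces to $0$ modulo $q$. For $p\mid q$ we have $\nu_p(q)\ge1$ and $\nu_p(j)\le\log(j)/\log(p)\le j-1$ (the same chain of inequalities used in the proof of Lemma \ref{p-adic lem}), so $\nu_p(q^{j-1}/j)=(j-1)\nu_p(q)-\nu_p(j)\ge 0$; thus $q^j/j=q\cdot(q^{j-1}/j)\in qR_q$, and multiplying by the integer $\binom{k-1}{j-1}d^{k-j}$ keeps us in $qR_q$. Combined with Lemma \ref{finiteness} (which kills the $j=0$ terms once $k$ exceeds some $N$), this shows that only finitely many coefficients of $\CL_q(1+d(x+\tfrac qd y))$ are nonzero modulo $q$, so it genuinely represents an element of $(\BZ/q\BZ)[x,y]$; and that element is precisely what is left after discarding the $j\ge1$ part, namely $\sum_{k\ge1}\tfrac{(-1)^{k+1}}{k}(dx)^k=\CL_q(1+dx)$.

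I do not expect a genuine obstacle here: the lemma falls out once the preliminary valuation estimates are in place. The one point needing a little care is to confirm that the substituted series reduces to a \emph{polynomial} mod $q$ before manipulating it, which is exactly what the uniform (over $p\mid q$) estimates above provide. As a conceptual sanity check, the asserted identity is nothing but the reduction modulo $q$ of the formal-logarithm relation $\log(1+dx+qy)=\log(1+dx)+\log\!\big(1+\tfrac{qy}{1+dx}\big)$, whose second summand expands into a power series all of whose coefficients lie in $qR_q$.
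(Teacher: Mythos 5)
Your proof is correct. The paper disposes of this lemma in one line: it expands $(x+\tfrac{q}{d}y)^k$ binomially and observes that every term with $j\ge 1$ carries a factor $c_k\cdot(\tfrac{q}{d})^j$, which vanishes mod $q$ because $c_k$ is a multiple of $d$ (Lemma \ref{coefficients}/\ref{finiteness}) and $(\tfrac{q}{d})^j$ then supplies the remaining factor of $\tfrac{q}{d}$. You instead regroup the coefficient $\tfrac{(-1)^{k+1}}{k}\binom{k}{j}d^{k-j}q^j$ via the identity $\tfrac{1}{k}\binom{k}{j}=\tfrac{1}{j}\binom{k-1}{j-1}$ and re-derive the needed valuation estimate from scratch for $q^{j-1}/j$ (applying $\nu_p(j)\le j-1$ to the index $j$ rather than reusing Lemma \ref{p-adic lem}, which is stated for $k$). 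Both routes isolate the same harmless extra factor of $q$; the paper's is shorter because it leans on the already-established divisibility of the $c_k$, while yours is more self-contained and makes the mechanism explicit. Your remark that one should first confirm the substituted series is a genuine polynomial mod $q$ (via Lemma \ref{finiteness}) before comparing coefficients is a sound point that the paper handles implicitly, and your closing observation that the identity is the mod-$q$ shadow of $\log(1+dx+qy)=\log(1+dx)+\log(1+\tfrac{qy}{1+dx})$ is a nice conceptual gloss, though not needed for the argument.
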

\begin{proof}
We have $\CL_q(1 + d(x+\frac{q}{d} y)) = \sum_{k} c_k (x + \frac{q}{d} y)^k \equiv \sum_k c_k x^k \pmod{q}$, using $c_k \equiv 0 \pmod{d}$ from Lemma \ref{finiteness}. 
\end{proof}

\begin{lemma} \label{additivity}
We have $\CL_q((1+dx)(1+dy)) = \CL_q(1+dx) + \CL_q(1+dy)$ in $(\BZ/q\BZ)[x,y]$.
\end{lemma}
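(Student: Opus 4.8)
The plan is to lift the identity to $\BQ[[x,y]]$, where it becomes the familiar statement that the formal logarithm carries products to sums, and then to push it down via the reduction map. Write $\lambda(t) = \sum_{k\ge1}(-1)^{k+1}t^k/k \in \BQ[[t]]$ for the formal logarithm, so that $\CL_q(1+dx) = \lambda(dx)$ by Definition \ref{d-adic log}. Since $(1+dx)(1+dy) = 1 + d(x+y+dxy)$ and $x+y+dxy$ has zero constant term, $\CL_q((1+dx)(1+dy)) := \lambda\big(d(x+y+dxy)\big)$ is a well-defined element of $\BQ[[x,y]]$, each of its $x^iy^j$-coefficients being a finite sum. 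Expanding $\lambda(d(x+y+dxy)) = \sum_k c_k (x+y+dxy)^k$ with $c_k = (-1)^{k+1}d^k/k$ and invoking Lemma \ref{coefficients} (so that each $c_k \in R_q$, indeed in $dR_q$), one sees that all three of $\lambda(dx)$, $\lambda(dy)$, $\lambda(d(x+y+dxy))$ lie in $R_q[[x,y]]$. Hence it suffices to prove
\[
\lambda\big(d(x+y+dxy)\big) = \lambda(dx) + \lambda(dy) \qquad \text{in } \BQ[[x,y]],
\]
and then apply the coefficient-wise (two-variable) reduction homomorphism $\overline\varphi : R_q[[x,y]] \to (\BZ/q\BZ)[[x,y]]$ extending \eqref{induced map}; by Lemma \ref{finiteness} the images are polynomials, which yields the asserted equality in $(\BZ/q\BZ)[x,y]$.

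To establish the identity over $\BQ$, set $G(s,t) = \lambda(s+t+st) - \lambda(s) - \lambda(t) \in \BQ[[s,t]]$, which is well-defined because $s+t+st$ has zero constant term. Using $\lambda'(t) = (1+t)^{-1}$ together with the chain rule and the factorization $1+s+t+st = (1+s)(1+t)$, one computes $\partial_s G = \frac{1+t}{(1+s)(1+t)} - \frac{1}{1+s} = 0$, and by symmetry $\partial_t G = 0$; since $G(0,0)=0$ and a power series with vanishing partial derivatives and zero constant term must be $0$, we get $G=0$. Substituting $s = dx$ and $t = dy$ (legitimate, as these lie in the maximal ideal $(x,y)$) and using $dx+dy+d^2xy = d(x+y+dxy)$ produces precisely the desired identity in $\BQ[[x,y]]$.

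There is essentially no deep obstacle here; the only points requiring care are bookkeeping. First, one must check that applying $\CL_q$ to a product is a legitimate formal-power-series composition, which is ensured by the zero constant term of $x+y+dxy$. Second, one must verify that all series involved have coefficients in the ring $R_q$, so that $\overline\varphi$ may be applied termwise and commutes with the algebraic manipulations; this is where Lemma \ref{coefficients} is used. Third, one must know that the reduced identity lives in the polynomial ring rather than merely the power-series ring, which is exactly the content of Lemma \ref{finiteness}. As an alternative to the derivative computation, one could instead use that $\exp$ and $\lambda$ are mutually inverse bijections between $(x,y)$ and $1+(x,y)$, so that $\exp(\lambda(dx)+\lambda(dy)) = (1+dx)(1+dy) = \exp(\lambda(d(x+y+dxy)))$ forces the identity by injectivity of $\exp$ on the augmentation ideal; the derivative argument is, however, shorter and entirely self-contained.
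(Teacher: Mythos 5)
Your proof is correct, and it follows the same high-level strategy as the paper's --- establish the additivity of the formal logarithm $\lambda(t)=\sum_{k\geq 1}(-1)^{k+1}t^k/k$ over $\BQ$ (with coefficients landing in $R_q$), then push down via the coefficient-wise reduction $\overline\varphi$ and invoke Lemma \ref{finiteness} to see that the result lies in the polynomial ring --- but the central step is carried out by a genuinely different means. The paper proves the $\BQ$-identity by appealing to the functional equation $\log(uv)=\log u + \log v$ for the real logarithm and to the uniqueness of power-series expansions of real-analytic functions, deferring to Koblitz for details. You instead give a purely formal-algebraic argument: setting $G(s,t)=\lambda(s+t+st)-\lambda(s)-\lambda(t)\in\BQ[[s,t]]$, you use $\lambda'(t)=(1+t)^{-1}$ and the factorization $1+s+t+st=(1+s)(1+t)$ to get $\partial_s G=\partial_t G=0$, conclude $G=0$ from $G(0,0)=0$ in characteristic zero, and then substitute $s=dx$, $t=dy$. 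This avoids any appeal to convergence or analyticity and is entirely self-contained within formal power series. Your write-up is also more careful than the paper's sketch about the bookkeeping: the composition is legitimate because $x+y+dxy$ has vanishing constant term, the coefficients lie in $R_q$ (via Lemma \ref{coefficients}) so the two-variable extension of $\overline\varphi$ can be applied termwise, and Lemma \ref{finiteness} places the reduced series in the polynomial ring. The alternative route through the formal exponential that you mention is equally valid. In short: same overall architecture, but a different (algebraic rather than analytic) and more explicit proof of the key identity in $\BQ[[s,t]]$.
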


  \begin{proof}
We give a brief sketch here, and refer to \cite[pp.79-80]{K} for more details. 
  For the real logarithm, we have $\log((1+dx)(1+dy)) = \log(1+dx) + \log(1+dy)$, for $dx > -1$, $dy > -1$. Hence, the power series expansions of these two expressions are equal wherever they both converge, meaning that all of their corresponding coefficients are equal. Thus, since $\CL_q(1+dx)$ matches the power series expansion of the real logarithm (reduced modulo $q$ via \eqref{induced map}), this property also holds for $\CL_q(1+dx)$.
\end{proof}

\begin{lemma}\label{modularity}
Let $q$ and $d$ be positive odd integers with $d|q$.
\begin{enumerate}
\item If $q|d^\infty$, then $\CL_q(1+dx) \equiv dx \Mod{(q,d^2)}$.
\item If $(q,3)=1$ and $q|d^3$, then $\CL_q(1+dx) \equiv dx-\overline{2}(dx)^2 \Mod{q}$.
\end{enumerate}
\end{lemma}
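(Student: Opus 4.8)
The plan is to argue directly from the coefficients $c_k = (-1)^{k+1} d^k / k$ of $\CL_q(1+dx)$, using the $p$-adic estimates already developed. By Lemma~\ref{finiteness} we know $\CL_q(1+dx)$ is a genuine polynomial in $(\mz/q\mz)[x]$ all of whose coefficients lie in $d(\mz/q\mz)$, so in both parts the point is simply to show that, modulo the relevant modulus, all coefficients beyond the claimed degree vanish. Since $q$ is odd and $d$, $q$ share all of the same prime factors (as noted after Definition~\ref{d-adic log}), every $p\mid q$ satisfies $p\geq 3$ and $\nu_p(d)\geq 1$; this, together with the elementary bound $\nu_p(k)\leq \log_p k$, is essentially all that is needed.

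For part~(1), fix $p\mid q$. The $k=1$ term is $c_1 x = dx$. For $k\geq 2$ I would show $\nu_p(k)\leq \nu_p(d^{k-2})$, whence $\nu_p(c_k)=2\nu_p(d)+\nu_p(d^{k-2}/k)\geq 2\nu_p(d)\geq \nu_p((q,d^2))$, and therefore $c_k\equiv 0\pmod{(q,d^2)}$ as desired. The inequality $\nu_p(k)\leq\nu_p(d^{k-2})$ splits into cases: $k=2$ is immediate since $p$ is odd; $k=3$ uses $\nu_p(3)\leq\nu_p(d)$ (trivial when $p\neq 3$, and $\nu_3(3)=1\leq\nu_3(d)$ when $p=3$); and for $k\geq 4$ one has $\nu_p(k)\leq \log_p k\leq \log_3 k\leq k-2\leq \nu_p(d^{k-2})$ using $\nu_p(d)\geq 1$.

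For part~(2), the hypotheses $(q,3)=1$ and $q\mid d^3$ give, for each $p\mid q$, that $p\geq 5$ and $\nu_p(q)\leq 3\nu_p(d)$. The $k=2$ coefficient is $c_2=-d^2/2\equiv -\overline 2\,d^2\pmod q$, which is exactly the quadratic term $-\overline 2(dx)^2$ (and $\overline 2$ is well defined since $q$ is odd). For $k\geq 3$ I would show $\nu_p(c_k)\geq 3\nu_p(d)\geq \nu_p(q)$, i.e. $(k-3)\nu_p(d)\geq \nu_p(k)$: for $k=3$ both sides vanish (the right side because $p\neq 3$), for $k=4$ the right side is $\nu_p(4)=0$, and for $k\geq 5$ one invokes Lemma~\ref{p-adic lem} with $A=3$ — valid from $N=5$ on, by Remark~\ref{p-adic case} — or equivalently notes $\nu_p(k)\leq\log_5 k\leq k-3$. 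Hence $c_k\equiv 0\pmod q$ for all $k\geq 3$, giving $\CL_q(1+dx)\equiv dx-\overline 2(dx)^2\pmod q$.

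The only genuinely delicate points — and the ones I would check most carefully — are the small-$k$ boundary cases where the crude bound $\log_p k\leq k-2$ (resp. $k-3$) is tight or outright fails: precisely $k=2,3$ in part~(1) and $k=3$ in part~(2). These are exactly the places where the argument must use the structural facts that $d$ and $q$ share prime factors (so $\nu_p(d)\geq 1$) and that $p\neq 3$ in part~(2) (so $\nu_p(3)=0$), rather than the generic logarithmic estimate. Everything else is a routine comparison of $\nu_p(k)=O(\log k)$ against the linear quantity $(k-c)\nu_p(d)$.
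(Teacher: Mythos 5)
Your proof is correct and follows essentially the same route as the paper's: both expand the power series, control $\nu_p(c_k)$ for $k\geq 4$ via Lemma \ref{p-adic lem} (with $A=3$, per Remark \ref{p-adic case}), and handle the small-$k$ boundary terms by hand using that $q$ is odd and, in part (2), that $p\neq 3$. The paper's version is slightly terser — it factors $d^3$ out of the tail $k\geq 4$ and then says each case follows — but the underlying valuation computations are identical, and your explicit treatment of $k=2,3$ simply spells out what the paper leaves implicit.
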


\begin{proof}
Expanding the power series, we have
\begin{equation*}
    \CL_q(1+dx) = dx-\tfrac{1}{2}(dx)^2+\tfrac{1}{3}(dx)^3-d^3\Big(\underbrace{\tfrac{1}{4}dx^4-\tfrac{1}{5}d^2x^5+\ldots}_{\in (\BZ/q\BZ)[x]}\Big).
\end{equation*}
The claim that the tail of this series is still a polynomial with coefficients in $\BZ/q\BZ$ after factoring out $d^3$ follows from Lemma \ref{p-adic lem}, or more specifically the observations in Remark \ref{p-adic case}. Elaborating on the details, since $k-3\geq \log(k)$ for all $k\geq5$, we may choose $N=5$ from Lemma \ref{p-adic lem}. However, since $q$ is odd, $\frac{1}{4}=\overline{2}^2$ in $\BZ/q\BZ$, so we could also pick $N=4$. 
Now the result follows in each case.
\end{proof}

  We are now ready for our version of the Postnikov formula.
\begin{lemma}[Postnikov formula]
\label{postnikov}
Let $q$ and $d$ be odd with $d|q$ and $q|d^\infty$. 
There exists a unique group homomorphism 
$a: \widehat{(\mz/q \mz)^{*}} \rightarrow \mz/(q/d) \mz$, $\psi \mapsto a_{\psi}$,
such that a Postnikov-type formula holds: for each Dirichlet character $\psi$ modulo $q$ and $x\in\BZ$ we have 
\begin{equation}
\label{eq:postnikov}
    \psi(1+dx) = e_q(a_\psi \CL_q(1+dx)).
\end{equation}
Finally, if $\psi$ is primitive modulo $q$ then $(a_{\psi}, q/d) = 1$.
\end{lemma}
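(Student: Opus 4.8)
The plan is to realize both sides of \eqref{eq:postnikov} as characters of the finite abelian group $H := \{1+dx \bmod q : x\in\mz\}$, which equals $\ker\big((\mz/q\mz)^*\to(\mz/d\mz)^*\big)$ and hence has order $q/d$ (each $1+dx$ is automatically a unit modulo $q$, since every prime dividing $q$ divides $d$, and $q,d$ share the same primes), and then to match them up by a counting argument. For $a\in\mz/(q/d)\mz$ I would define $\chi_a\colon H\to\mc^*$ by $\chi_a(1+dx)=e_q\big(a\,\CL_q(1+dx)\big)$. This is well defined: the coefficients of $\CL_q(1+dx)$ are multiples of $d$ (Lemma \ref{finiteness}), so the right-hand side depends only on $a$ modulo $q/d$; and if $1+dx\equiv 1+dx'\pmod q$, i.e.\ $x\equiv x'\pmod{q/d}$, then $\CL_q(1+dx)\equiv\CL_q(1+dx')\pmod q$ by Lemma \ref{periodicity} (specialized to integer arguments). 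Moreover $\chi_a$ is a homomorphism: since $(1+dx)(1+dy)=1+d(x+y+dxy)$ for integers $x,y$, substituting into the identity of Lemma \ref{additivity} gives $\CL_q\big(1+d(x+y+dxy)\big)\equiv\CL_q(1+dx)+\CL_q(1+dy)\pmod q$. Thus $a\mapsto\chi_a$ is a group homomorphism $\Phi\colon\mz/(q/d)\mz\to\widehat H$.

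The crux is that $\Phi$ is injective. If $\chi_a$ is trivial, then taking $x=1$ gives $q\mid a\,\CL_q(1+d)$. For each prime $p\mid q/d$ one has $\nu_p\big(\CL_q(1+d)\big)=\nu_p(d)$: writing $\CL_q(1+d)=\sum_{k\ge1}(-1)^{k+1}d^k/k$, the term $k=1$ equals $d$, while for $k\ge2$ one has $\nu_p(d^k/k)=k\nu_p(d)-\nu_p(k)>\nu_p(d)$, because $\nu_p(d)\ge1$ and $p^{k-1}>k$ forces $\nu_p(k)\le k-2$; so the $k=1$ term strictly dominates $p$-adically. For primes $p\mid q$ with $p\nmid q/d$ there is nothing to check, so $\nu_p(a)\ge\nu_p(q)-\nu_p(d)=\nu_p(q/d)$ for all $p\mid q$, i.e.\ $a\equiv0\pmod{q/d}$. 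Since $|\mz/(q/d)\mz|=q/d=|H|=|\widehat H|$, the injective homomorphism $\Phi$ is an isomorphism. Consequently every Dirichlet character $\psi$ modulo $q$ has a unique $a_\psi\in\mz/(q/d)\mz$ with $\psi|_H=\chi_{a_\psi}$, which is exactly \eqref{eq:postnikov}, and it holds for all $x\in\mz$ because both sides depend only on $x\bmod q/d$. The uniqueness of the whole assignment $\psi\mapsto a_\psi$ is the injectivity of $\Phi$, and it is a homomorphism because restriction $\widehat{(\mz/q\mz)^*}\to\widehat H$ and $\Phi^{-1}$ are.

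For the primitivity claim, suppose $\psi$ is primitive modulo $q$ and, seeking a contradiction, that a prime $p$ divides $\gcd(a_\psi,q/d)$. Then $p\mid q/d$, so $\nu_p(q)\ge\nu_p(d)+1\ge2$, and $q':=q/p$ is a proper divisor of $q$ with $d\mid q'$. Write $q'=dm$. Every element of $K:=\ker\big((\mz/q\mz)^*\to(\mz/q'\mz)^*\big)$ has the form $1+q't=1+d(mt)$, and specializing $\CL_q(1+dx)$ at $x=mt$ produces exactly $\CL_q(1+q't)$, since $(dm)^k=(q')^k$. As $q$ and $q'$ have the same prime support and $q\mid(q')^2$, Lemma \ref{modularity}(1), applied with $q'$ in place of $d$, gives $\CL_q(1+q't)\equiv q't\pmod q$. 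Hence $\psi(1+q't)=e_q(a_\psi q't)=e_p(a_\psi t)=1$ for all $t\in\mz$, so $\psi$ is trivial on $K$ and therefore induced by a character modulo $q'<q$, contradicting primitivity. Thus $\gcd(a_\psi,q/d)=1$.

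The main obstacle is organizing the two valuation computations that power the argument: that $\CL_q(1+d)$ has $p$-adic valuation exactly $\nu_p(d)$ (used for injectivity of $\Phi$) and that $\CL_q$ collapses to its linear term when evaluated on $K$ (used for primitivity). Both rest on $q$ being odd and on $q\mid d^\infty$; everything else is bookkeeping with the formal identities established in Lemmas \ref{periodicity}--\ref{modularity}.
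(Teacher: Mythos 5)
Your proof is correct. It proceeds via the same overall strategy as the paper's --- identify the kernel $K$ (your $H$) of reduction $(\mz/q\mz)^*\to(\mz/d\mz)^*$, define a candidate family of characters $a\mapsto\chi_a=e_q(a\,\CL_q(1+d\cdot))$ on $K$, and show this identifies $\mz/(q/d)\mz$ with $\widehat{K}$ --- but the key bijectivity step is argued differently. The paper shows that $f=\chi_1$ has order exactly $q/d$ by computing $f^{q/(dp)}$ for each prime $p\mid q/d$: using Lemma \ref{modularity}(1), $e_q\bigl(\tfrac{q}{dp}\CL_q(1+dx)\bigr)=e_{dp}(\CL_q(1+dx))=e_{dp}(dx)=e(x/p)$, which is nontrivial. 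You instead prove injectivity of $a\mapsto\chi_a$ by showing $\nu_p(\CL_q(1+d))=\nu_p(d)$ exactly, which requires the strict inequality $\nu_p(d^k/k)>\nu_p(d)$ for $k\ge 2$; this is a mild sharpening of Lemma \ref{p-adic lem} that you correctly supply ($p^{k-1}>k$ for odd $p$ forces $\nu_p(k)\le k-2$). The two routes establish the same fact --- that the linear coefficient $d$ of $\CL_q$ dominates $p$-adically --- but the paper leans on the already-proved modularity lemma, whereas your argument is self-contained at the cost of one extra elementary inequality. For primitivity, both proofs replace $d$ by $q/p$ and invoke Lemma \ref{modularity}(1) to collapse $\CL_q$ to its linear term modulo $q$; your write-up is slightly more careful in making explicit the compatibility of the coefficient $a_\psi$ under the substitution $x\mapsto mt$, a step the paper elides by notational overloading. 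Both are sound.
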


\begin{proof}
Consider the reduction modulo $d$ map
$(\BZ/q\BZ)^* \to (\BZ/d\BZ)^*$,
and denote its kernel by $K$. Since $d$ and $q$ share their prime factors, $K=\left\{1+dx : x\Mod{\frac{q}{d}}\right\}$, so $|K|=\frac{q}{d}$. Consider the map $f:K\to S^1$ defined by
\begin{equation*}
    f(1+dx)=e_q(\CL_q(1+dx)).
\end{equation*}
The function $f$ is well-defined by Lemmas \ref{finiteness} and \ref{periodicity}. Furthermore, $f$ is a group homomorphism by Lemma \ref{additivity}. Thus, $f$ is a character on $K$, and we claim that $f$ has order $\frac{q}{d}$ in $\widehat{K}$. Indeed, if $p$ is a prime such that $p|\frac{q}{d}$, then we have $f(1+dx)^{q/dp}=e_q\left(\frac{q}{dp}\CL_q(1+dx)\right)=e_{dp}\left(\CL_q(1+dx)\right)=e_{dp}(dx)=e(\frac{x}{p})$, by Lemma \ref{modularity} since $dp|(q,d^2)$. Hence, $\widehat{K}$ is cyclic and generated by $f$. Therefore, every element of $\widehat{K}$ is of the form $f^a$ for some $a \Mod{\frac{q}{d}}$. In particular, $\psi$ is a character on $(\BZ/q\BZ)^*$, so restricting $\psi$ to $K$ makes it an element of $\widehat{K}$. Thus, there exists a unique $a_\psi \Mod{\frac{q}{d}}$ such that $\psi|_K = f^{a_\psi}$, which is equivalent to the Postnikov formula.

For the final statement, suppose that $p$ is a prime with $p|\frac{q}{d}$ and $p|a_{\psi}$; we will show that $\psi$ is periodic modulo $q/p$.  
Since $d$ and $q$ share all the same prime factors, then $p^2|q$, and hence $q | (q/p)^{\infty}$.
Applying \eqref{eq:postnikov} with $d$ replaced by $\frac{q}{p}$, we obtain that $\psi(1+\frac{q}{p} y ) = e_q(a_{\psi} \frac{q}{p} y)$.  Hence if $p|a_{\psi}$ then $\psi$ is periodic modulo $q/p$.
\end{proof}

\subsection{Miscellaneous Lemmas}

  In the course of this paper, we encounter sums of the form
\begin{equation}\label{post sum}
    \CS_{q,d}(\psi,k) := \sum_{u \Mod{\frac{q}{d}}} \psi(1+du) e_q(dku). %name inner sum
\end{equation}
We may evaluate this explicitly in many cases.  The conditions relevant for Theorem \ref{main2} are contained in the following.
\begin{lemma}\label{sum eval}
Let $(q,3) =1$, $\psi$ have conductor $q$, and suppose $d^2|q$ and $q|d^3$. Also, let $k \in \mz$. 
If $k \not \equiv -a_{\psi} \Mod{d}$ then $\CS_{q,d}(\psi,k) = 0$.  If $k \equiv - a_{\psi} \Mod{d}$ then
\begin{equation*}
    \CS_{q,d}(\psi,k) = 
\varepsilon_{q } \sqrt{q}   
    e_q(\overline{2a_\psi}(k+a_\psi)^2) \left(\frac{- 2a_\psi}{q/d^2}\right).  
\end{equation*}
\end{lemma}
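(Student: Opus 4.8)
The plan is to substitute the Postnikov formula of Lemma \ref{postnikov} into the definition \eqref{post sum}, reduce $\CL_q(1+du)$ to its quadratic truncation via Lemma \ref{modularity}(2), and then recognize the resulting sum as a quadratic Gauss sum to which Lemma \ref{quad sum} applies. First I would write
\[
\CS_{q,d}(\psi,k) = \sum_{u \Mod{q/d}} e_q\big(a_\psi \CL_q(1+du) + dku\big),
\]
using \eqref{eq:postnikov}. Since $(q,3)=1$ and $q|d^3$, Lemma \ref{modularity}(2) gives $\CL_q(1+du) \equiv du - \overline{2}(du)^2 \Mod q$, so the exponent becomes $e_q\big((a_\psi d + dk)u - a_\psi \overline{2} d^2 u^2\big)$. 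Writing $u$ over a complete residue system mod $q/d$, note $d^2 u^2$ only depends on $u \Mod{q/d^2}$ (since $d^2 \cdot (q/d^2) \cdot (\text{anything}) \equiv 0 \Mod q$ when multiplied against the remaining factor of $u$ — more carefully, $d^2(u + q/d^2)^2 = d^2 u^2 + 2 d^2 u (q/d^2) + d^2(q/d^2)^2 \equiv d^2 u^2 \Mod q$ as $d | q/d^2$ forces $d^2 \cdot q/d^2 \cdot 2u$... ) — this bookkeeping about which part of $u$ the linear and quadratic terms see is exactly the step I would do carefully.

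The cleanest route: split $u = v + (q/d^2) w$ with $v \Mod{q/d^2}$ and $w \Mod d$. The quadratic term $d^2 u^2 \equiv d^2 v^2 \Mod q$ depends only on $v$, while the linear term $d(a_\psi + k)u = d(a_\psi+k)v + (q/d)(a_\psi+k)w$, and $e_q$ of the last piece is $e_d((a_\psi+k)w)$. Summing over $w \Mod d$ gives $d$ if $d \mid a_\psi + k$ and $0$ otherwise — this produces the dichotomy in the statement. When $k \equiv -a_\psi \Mod d$, write $k + a_\psi = d\ell$; then $d(a_\psi+k)v = d^2 \ell v$, and we are left with
\[
\CS_{q,d}(\psi,k) = d \sum_{v \Mod{q/d^2}} e_{q}\big(d^2 \ell v - a_\psi \overline{2} d^2 v^2\big) = d \sum_{v \Mod{r}} e_{r}\big(\ell v - a_\psi \overline{2} v^2\big),
\]
where $r = q/d^2$, since $e_q(d^2 z) = e_{q/d^2}(z) = e_r(z)$. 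Now $r$ is a positive odd integer (as $q$ is odd and $d^2 | q$), and the quadratic coefficient is $B = -a_\psi \overline{2}$; we need $(B,r)=1$, which holds because $(a_\psi, q/d)=1$ by Lemma \ref{postnikov} (primitivity) and $r | q/d$, and $2$ is invertible mod the odd $r$.

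Applying Lemma \ref{quad sum} with $A = \ell = (k+a_\psi)/d$ and $B = -\overline{2}a_\psi$ yields
\[
\sum_{v \Mod r} e_r(Av + Bv^2) = e_r(-\overline{4B}A^2)\Big(\tfrac{B}{r}\Big)\varepsilon_r \sqrt r.
\]
Here $\overline{4B} = \overline{-2a_\psi} = -\overline{2a_\psi}$ mod $r$, so $-\overline{4B}A^2 = \overline{2a_\psi}\,\ell^2$, and $e_r(\overline{2a_\psi}\ell^2) = e_q(d^2 \overline{2a_\psi}\ell^2) = e_q(\overline{2a_\psi}(d\ell)^2) = e_q(\overline{2a_\psi}(k+a_\psi)^2)$ — matching the claimed exponential factor, with the understanding that $\overline{2a_\psi}$ is interpreted mod $r$ inside $e_r$, equivalently mod $q$ after scaling by $d^2$ (one should check this is consistent, i.e. that $(2a_\psi, r) = 1$, already noted). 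The Jacobi symbol is $\big(\tfrac{-\overline{2}a_\psi}{r}\big) = \big(\tfrac{\overline 2}{r}\big)\big(\tfrac{-a_\psi}{r}\big) = \big(\tfrac 2 r\big)\big(\tfrac{-a_\psi}{r}\big) = \big(\tfrac{-2a_\psi}{r}\big)$, using $\big(\tfrac{\overline 2}{r}\big) = \big(\tfrac 2 r\big)$; with $r = q/d^2$ this is $\big(\tfrac{-2a_\psi}{q/d^2}\big)$ as stated. Finally $\varepsilon_r \sqrt r$: we have $d \cdot \varepsilon_r \sqrt{r} = \varepsilon_r \sqrt{d^2 r} = \varepsilon_r \sqrt{q}$, and since $q = d^2 r$ with $d$ odd we get $q \equiv r \Mod 4$, so $\varepsilon_r = \varepsilon_q$, giving the prefactor $\varepsilon_q \sqrt q$. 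Combining the three factors produces exactly the claimed formula.

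The main obstacle is purely bookkeeping: keeping straight which residue class each of the linear and quadratic terms in $u$ genuinely depends on (mod $d$, mod $q/d^2$, or mod $q$), and checking the chain of congruence identities $e_q(d^2 z) = e_{q/d^2}(z)$ together with the reductions of $\overline{2a_\psi}$ modulo the various moduli are consistent — in particular that the coprimality hypothesis $(B,r)=1$ of Lemma \ref{quad sum} is met, which comes down to $(a_\psi, q/d)=1$ from primitivity and $r \mid q/d$. None of this is deep, but the indices must be handled with care; the substitution $u = v + (q/d^2)w$ is the device that makes the linear-term-kills-or-keeps dichotomy transparent.
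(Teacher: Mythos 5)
Your proof is correct and follows essentially the same route as the paper: apply the Postnikov formula, truncate $\CL_q$ to the quadratic term via Lemma \ref{modularity}(2), extract the congruence dichotomy, and finish with the quadratic Gauss sum of Lemma \ref{quad sum}. The only cosmetic difference is that you split $u=v+(q/d^2)w$ explicitly while the paper shifts $u\mapsto u+q/d^2$ and then appeals to periodicity modulo $q/d^2$ -- the same device in different clothing.
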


\begin{proof}
By Lemma \ref{postnikov}, we have 
\begin{align*}
    \CS_{q,d}(\psi,k) &= \sum_{u \Mod{\frac{q}{d}}} e_q(dku+a_\psi \CL_q(1+du)).
\end{align*}
By Lemma \ref{modularity}(2), this simplifies as
\begin{equation*}
   \sum_{u \Mod{\frac{q}{d}}} e_q\left(dku+a_\psi\left(du - \overline{2}(du)^2\right)\right)
    = \sum_{u \Mod{\frac{q}{d}}} e_{q/d}\left((k+a_\psi)u - 2 a_\psi du^2\right).
\end{equation*}
Since $q/d^2$ is an integer by hypothesis, we may change variables $u \rightarrow u + q/d^2$ to 
see that the sum vanishes unless $k \equiv -a_\psi \Mod{d}$. We continue under this assumption, which means that the summand is actually periodic modulo $q/d^2$, and hence
\begin{equation*}
    \CS_{q,d}(\psi,k) 
    = d\sum_{u \Mod{\frac{q}{d^2}}} e_{q/d^2}\Big(\Big(\frac{k+a_\psi}{d}\Big)u - 2 a_\psi u^2\Big).
\end{equation*}
Lemma \ref{quad sum}, along with some simplification, concludes the proof.
\end{proof}

We can also easily calculate $\CS_{q,d}$ under the conditions relevant for Theorem \ref{main1}, as follows.
\begin{lemma}\label{sum eval2}
Suppose $\psi$ has conductor $q$, $d|q$, and $q|d^2$. Let $k \in \mz$. 
Then
\begin{equation*}
\CS_{q,d}(\psi,k)
=
\begin{cases}
q/d, \qquad & k \equiv - a_{\psi} \pmod{q/d} \\
0, \qquad & k \not \equiv - a_{\psi} \pmod{q/d}.
\end{cases}
\end{equation*}
\end{lemma}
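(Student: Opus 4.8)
The plan is to reduce $\CS_{q,d}(\psi,k)$ to a complete additive character sum and evaluate it by orthogonality. The first step is to observe that the hypothesis $q\mid d^2$ forces the Postnikov expansion to linearize. Indeed, by Lemma \ref{modularity}(1) we have $\CL_q(1+dx)\equiv dx\Mod{(q,d^2)}$, and since $(q,d^2)=q$ this gives $\CL_q(1+du)\equiv du\Mod{q}$, whence $\psi(1+du)=e_q(a_\psi\,\CL_q(1+du))=e_q(a_\psi du)$ for all $u\in\mz$. (Alternatively, one can bypass the machinery of Section \ref{section:Postnikov} entirely and use the elementary description of $a_\psi$ from the introduction: $(1+du)(1+dv)\equiv 1+d(u+v)\Mod{q}$ when $q\mid d^2$, so $u\mapsto\psi(1+du)$ is an additive character modulo $q/d$, i.e. $\psi(1+du)=e_q(a_\psi du)$; this route does not require $q$ odd.)

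Substituting this into the definition \eqref{post sum} yields
\[
\CS_{q,d}(\psi,k)=\sum_{u\Mod{q/d}}e_q\big((a_\psi+k)du\big)=\sum_{u\Mod{q/d}}e_{q/d}\big((a_\psi+k)u\big).
\]
The second step is then the standard orthogonality relation for additive characters modulo $q/d$: this sum equals $q/d$ when $q/d\mid a_\psi+k$, i.e.\ $k\equiv -a_\psi\Mod{q/d}$, and vanishes otherwise. This matches the two cases in the statement.

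There is no genuine obstacle here; the computation is routine once the linearization is in hand, and the only place the hypothesis $q\mid d^2$ (as opposed to merely $q\mid d^3$, the setting of Lemma \ref{sum eval}) is used is precisely to kill the quadratic term in $\CL_q(1+dx)$, so that the character sum is linear rather than quadratic and no Gauss sum is needed.
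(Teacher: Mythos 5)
Your proof is correct and follows exactly the route the paper hints at when it says the result is ``similar to, but much easier than, Lemma \ref{sum eval}, since in this case we may discard the quadratic terms in $\CL_q(1+du)$'': linearize via Lemma \ref{modularity}(1) (using $(q,d^2)=q$), then evaluate by orthogonality of additive characters modulo $q/d$. Your parenthetical observation that the elementary definition of $a_\psi$ from the introduction gives the same linearization without invoking the Postnikov machinery (and hence without needing $q$ odd) is a nice bonus, but it does not constitute a genuinely different approach.
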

The proof of Lemma \ref{sum eval2} is similar to, but much easier than, Lemma \ref{sum eval}, since in this case we may discard the quadratic terms in $\CL_q(1+du)$.  We omit the details.

\begin{lemma}[Iwaniec-Kowalski, \cite{IK}, Theorem 5.3]\label{afe}
Let $\chi$ be a primitive even character modulo $q$. Suppose $G(s)$
is holomorphic and bounded in the strip $-4 < Re(s) < 4$, even, and normalized by
$G(0) = 1$. Then
\begin{equation*}
    L(1/2,\chi)L(1/2,\overline\chi) = 2 \sum_{m,n \geq 1} \frac{\chi(m)\overline\chi(n)}{\sqrt{mn}} V\left(\frac{mn}{q}\right),
\end{equation*}
where $V(x)$ is a smooth function defined by
\begin{equation}\label{V}
    V(x) = \frac{1}{2\pi i} \int_{(1)} \frac{G(s)}{s} \frac{\gamma(1/2+s)^2}{\gamma(1/2)^2} x^{-s} ds
\end{equation}
and
\begin{equation}\label{gamma}
    \gamma(s) = \pi^{-s/2} \Gamma\left(\frac{s}{2}\right).
\end{equation}
\end{lemma}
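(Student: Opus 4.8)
The plan is to deduce the identity from the functional equation of the product $\Lambda(s,\chi)\Lambda(s,\overline{\chi})$, which for \emph{even} primitive $\chi$ is entire and invariant under $s\mapsto 1-s$. Write $\Lambda(s,\chi)=q^{s/2}\gamma(s)L(s,\chi)$ with $\gamma$ as in \eqref{gamma}. Primitivity gives that $\Lambda(s,\chi)$ is entire and satisfies $\Lambda(s,\chi)=\varepsilon(\chi)\Lambda(1-s,\overline{\chi})$ with $|\varepsilon(\chi)|=1$; multiplying the functional equations for $\chi$ and $\overline{\chi}$ and using $\varepsilon(\chi)\varepsilon(\overline{\chi})=\chi(-1)=1$ (as $\chi$ is even) shows that $F(s):=\Lambda(\tfrac12+s,\chi)\Lambda(\tfrac12+s,\overline{\chi})$ is entire and even in $s$. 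Note $F(s)=q^{1/2+s}\gamma(\tfrac12+s)^2L(\tfrac12+s,\chi)L(\tfrac12+s,\overline{\chi})$, so that the double poles of $\gamma(\tfrac12+s)^2$ at $s=-\tfrac12,-\tfrac52,\dots$ are exactly cancelled by the trivial zeros of $L(\tfrac12+s,\chi)L(\tfrac12+s,\overline{\chi})$.

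First I would introduce
\begin{equation*}
\mathcal{I}:=\frac{1}{2\pi i}\int_{(3)}\frac{\gamma(\tfrac12+s)^2}{\gamma(\tfrac12)^2}\,q^{s}\,L(\tfrac12+s,\chi)L(\tfrac12+s,\overline{\chi})\,\frac{G(s)}{s}\,ds=\frac{q^{-1/2}}{\gamma(\tfrac12)^2}\cdot\frac{1}{2\pi i}\int_{(3)}F(s)\,\frac{G(s)}{s}\,ds,
\end{equation*}
and evaluate it in two ways. On the line $\Re(s)=3$ the Dirichlet series $L(\tfrac12+s,\chi)L(\tfrac12+s,\overline{\chi})=\sum_{m,n\geq1}\chi(m)\overline{\chi}(n)(mn)^{-1/2-s}$ converges absolutely; interchanging summation and integration and moving the contour of the resulting inner integral from $(3)$ to $(1)$ (legitimate since $G(s)\gamma(\tfrac12+s)^2/s$ is holomorphic for $\Re(s)>0$ and $\gamma$ decays) identifies it with $V$ in \eqref{V}, giving $\mathcal{I}=\sum_{m,n\geq1}\frac{\chi(m)\overline{\chi}(n)}{\sqrt{mn}}V\!\big(\tfrac{mn}{q}\big)$. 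On the other hand, the integrand $\frac{q^{-1/2}}{\gamma(1/2)^2}F(s)\frac{G(s)}{s}$ is holomorphic in $-3\leq\Re(s)\leq3$ except for a simple pole at $s=0$ coming from $1/s$ (here one uses that $F$ is entire), with residue $\frac{q^{-1/2}}{\gamma(1/2)^2}F(0)G(0)=L(\tfrac12,\chi)L(\tfrac12,\overline{\chi})$ on unfolding $F(0)=q^{1/2}\gamma(\tfrac12)^2L(\tfrac12,\chi)L(\tfrac12,\overline{\chi})$. Since $F$ and $G$ are even while $1/s$ is odd, the integrand is odd in $s$, so the shifted integral over $(-3)$ equals $-\mathcal{I}$; hence $2\mathcal{I}=L(\tfrac12,\chi)L(\tfrac12,\overline{\chi})$. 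Comparing the two evaluations yields the lemma.

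The steps above are routine once the contour shifts are justified, and that justification is the only point requiring care: by the convexity bound for $L(\tfrac12+s,\chi)$ on vertical lines together with Stirling for $\gamma$, $F(s)$ grows at most polynomially on vertical lines, and this is absorbed by the assumed decay of $G$ in $-4<\Re(s)<4$, so the horizontal connecting segments vanish as their height tends to infinity. The only structural inputs are primitivity of $\chi$ (so that $\Lambda(s,\chi)$ is entire) and evenness of $\chi$ (so that the completed product has trivial sign and is $s\mapsto 1-s$ symmetric), both of which are hypotheses of the lemma. Since the statement is \cite[Theorem 5.3]{IK} verbatim, one may alternatively simply cite it.
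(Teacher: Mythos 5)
Your proposal is correct and gives a complete derivation by the standard contour-shift/unfolding argument; this is exactly the method of \cite[Theorem 5.3]{IK}, which the paper simply cites rather than re-proving. The key structural observations — that $F(s)=\Lambda(\tfrac12+s,\chi)\Lambda(\tfrac12+s,\overline\chi)$ is entire and even for even primitive $\chi$, that the integrand is therefore odd so the shifted integral returns $-\mathcal{I}$, and that the residue at $s=0$ produces $L(\tfrac12,\chi)L(\tfrac12,\overline\chi)$ — are all right, as is the identification of the inner integral with $V$ after moving the line from $(3)$ to $(1)$.

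One small imprecision in the justification of the contour shift: you write that $F(s)$ grows at most polynomially and that this is ``absorbed by the assumed decay of $G$.'' But $G$ is only assumed \emph{bounded} on the strip, not decaying, so boundedness of $G$ alone could not absorb polynomial growth. What actually saves the day is that $F$ does not merely have polynomial growth — it decays \emph{exponentially} on vertical lines, because the factor $\gamma(\tfrac12+s)^2$ decays like $e^{-\pi|\Im s|/2}$ by Stirling, and this beats the polynomial (convexity) growth of $L(\tfrac12+s,\chi)L(\tfrac12+s,\overline\chi)$ together with the boundedness of $G$. So the horizontal segments vanish, but the credit goes to the gamma factor, not to $G$. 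This is a cosmetic slip; the argument and conclusion are sound.
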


  This next lemma encompasses the opening moves to prove Theorems \ref{main1} and  \ref{main2}.
\begin{lemma}\label{gambit}
Let $\psi$ be even and have conductor $q$ and suppose $d\prec q$. Then
\begin{equation}
\label{eq:Mdef}
 \mathcal{M}:=   \sum_{\substack{\chi \Mod{d}\\ \chi \text{ even}}} |L(1/2,\chi \cdot \psi)|^2 = \varphi(d) \sum_\pm \sum_{\substack{m \equiv \pm n \Mod{d}\\(mn,q)=1}} \frac{\psi(m)\overline\psi(n)}{\sqrt{mn}} V\left(\frac{mn}{q}\right).
\end{equation}
\end{lemma}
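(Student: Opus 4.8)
The plan is to expand each squared modulus with the approximate functional equation of Lemma~\ref{afe} and then collapse the sum over $\chi$ by orthogonality of characters. Before invoking Lemma~\ref{afe} one must check that, for every even character $\chi$ modulo $d$, the product $\chi\cdot\psi$ is a \emph{primitive} even character modulo $q$. Evenness is immediate, since $(\chi\psi)(-1)=\chi(-1)\psi(-1)=1$. For primitivity, write $\psi=(\chi\psi)\cdot\overline{\chi}$: if $\chi\psi$ has conductor $f$ (necessarily $f\mid q$), then $\psi$ is periodic modulo $\operatorname{lcm}(f,d)$, so primitivity of $\psi$ modulo $q$ forces $\operatorname{lcm}(f,d)=q$. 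Since $d\prec q$ means $\nu_p(d)<\nu_p(q)$ for every $p\mid q$, this requires $\nu_p(f)=\nu_p(q)$ for all $p\mid q$, i.e.\ $f=q$. This will be the one place where the hypothesis $d\prec q$ is used, and I expect it to be the only non-routine step: if $\nu_p(d)=\nu_p(q)$ were allowed for some $p$, then $\chi\psi$ could have conductor strictly smaller than $q$ for suitable $\chi$, the approximate functional equation would carry a different conductor, and the clean shape of \eqref{eq:Mdef} would be lost.

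Applying Lemma~\ref{afe} to the primitive even character $\chi\psi$, and using $\overline{L(1/2,\chi\psi)}=L(1/2,\overline{\chi}\,\overline{\psi})$ together with the factorization of the Dirichlet coefficients as $\chi(m)\overline{\chi}(n)\cdot\psi(m)\overline{\psi}(n)$, gives
\begin{equation*}
 |L(1/2,\chi\psi)|^2 = 2\sum_{m,n\geq 1}\frac{\chi(m)\overline{\chi}(n)\,\psi(m)\overline{\psi}(n)}{\sqrt{mn}}\,V\!\left(\frac{mn}{q}\right).
\end{equation*}
Shifting the contour in \eqref{V} shows that $V$ decays rapidly, so the double series converges absolutely; as there are only finitely many even characters modulo $d$, I may sum over them and interchange the order of summation to get
\begin{equation*}
 \mathcal{M}=2\sum_{m,n\geq 1}\frac{\psi(m)\overline{\psi}(n)}{\sqrt{mn}}\,V\!\left(\frac{mn}{q}\right)\sum_{\substack{\chi \Mod{d}\\ \chi \text{ even}}}\chi(m)\overline{\chi}(n).
\end{equation*}

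Finally I evaluate the inner character sum. It vanishes unless $(mn,d)=1$, in which case it equals $\sum_{\chi\text{ even}}\chi(m\overline{n})$, where $n\overline{n}\equiv 1\Mod{d}$. By orthogonality of the even characters modulo $d$, the quantity $\sum_{\chi\text{ even}}\chi(a)$ equals $\tfrac{\varphi(d)}{2}$ if $a\equiv 1\Mod{d}$, plus $\tfrac{\varphi(d)}{2}$ if $a\equiv -1\Mod{d}$, and is $0$ when $(a,d)>1$; hence for $(mn,d)=1$ one has $\sum_{\chi\text{ even}}\chi(m)\overline{\chi}(n)=\tfrac{\varphi(d)}{2}$ summed over those signs $\pm$ for which $m\equiv\pm n\Mod{d}$. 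Substituting, the constant becomes $2\cdot\tfrac{\varphi(d)}{2}=\varphi(d)$, the sum splits over the two congruences $m\equiv\pm n\Mod{d}$, and — since $\psi$ has modulus $q$ — the summand already forces $(mn,q)=1$, a condition which subsumes $(mn,d)=1$ because $d\mid q$. This is exactly \eqref{eq:Mdef}.
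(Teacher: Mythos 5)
Your proposal is correct and follows essentially the same route as the paper: apply the approximate functional equation (Lemma~\ref{afe}) to each primitive even character $\chi\psi$ modulo $q$, swap the order of summation, and collapse the $\chi$-sum by orthogonality of even characters modulo $d$. The only cosmetic differences are that you spell out the conductor argument showing $\chi\psi$ is primitive (via $\operatorname{lcm}(f,d)=q$ and $d\prec q$), which the paper merely asserts, and that you work directly with orthogonality of the even-character subgroup rather than inserting $\tfrac{1+\chi(-1)}{2}$ and using full orthogonality — these are equivalent.
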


\begin{proof}
Using $\overline{L(1/2,\chi)} = L(1/2,\overline{\chi})$, then
\begin{equation*}
    \CM = \sum_{\substack{\chi \Mod{d}\\ \chi \text{ even}}} L(1/2,\chi \cdot \psi) L(1/2,\overline{\chi \cdot \psi}).
\end{equation*}
The character $\chi\cdot\psi$ is primitive modulo $q$ because $\psi$ is primitive modulo $q$ and $d\prec q$. 
Also, $\chi \cdot \psi$ is even.
Thus, Lemma \ref{afe} implies
\begin{equation*}
    \CM = \sum_{\substack{\chi \Mod{d}\\ \chi \text{ even}}} 2 \sum_{m,n \geq 1} \frac{\chi(m)\psi(m)\overline\chi(n)\overline\psi(n)}{\sqrt{mn}} V\left(\frac{mn}{q}\right).
\end{equation*}
Using $\frac{(1 + \chi(-1))}{2} $ to detect $\chi$ even followed by orthogonality of characters secures \eqref{eq:Mdef}.
\end{proof}

\begin{lemma}\label{diag}
Let $q$ be a positive integer, and let $V$ be as defined in \eqref{V}. Then
\begin{equation*}
    \sum_{(n,q)=1} \frac{1}{n} V\left(\frac{n^2}{q}\right) = \frac{\varphi(q)}{q}\left[\tfrac{1}{2}\log(q) + \gamma_0 + \tfrac{\gamma'}{\gamma}(1/2) + \theta(q)\right] + O(q^{-1/2+\varepsilon}).
\end{equation*}
\end{lemma}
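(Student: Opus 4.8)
The plan is to express the sum as a contour integral using the Mellin representation of $V$ and then evaluate the resulting Dirichlet series. Substituting \eqref{V} and interchanging the (absolutely convergent) sum and integral, I would write
\[
\sum_{(n,q)=1} \frac{1}{n} V\left(\frac{n^2}{q}\right) = \frac{1}{2\pi i} \int_{(1)} \frac{G(s)}{s} \frac{\gamma(1/2+s)^2}{\gamma(1/2)^2} q^{s} \sum_{(n,q)=1} \frac{1}{n^{1+2s}} \, ds = \frac{1}{2\pi i} \int_{(1)} \frac{G(s)}{s} \frac{\gamma(1/2+s)^2}{\gamma(1/2)^2} q^{s} \zeta(1+2s) \prod_{p|q}\left(1 - p^{-1-2s}\right) ds.
\]
The integrand has a double pole at $s=0$: a simple pole from $1/s$ and a simple pole from $\zeta(1+2s)$. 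I would shift the contour to $\Re(s) = -1/4 + \varepsilon$ (staying inside the strip where $G$ is holomorphic and bounded, and where $\zeta(1+2s)$ has no further poles and only polynomial growth after the convexity bound/standard estimates), picking up the residue at $s=0$.

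The main term is then the residue of $\frac{G(s)}{s} \frac{\gamma(1/2+s)^2}{\gamma(1/2)^2} q^{s} \zeta(1+2s) \prod_{p|q}(1-p^{-1-2s})$ at $s=0$. Writing $\zeta(1+2s) = \frac{1}{2s} + \gamma_0 + O(s)$ and Taylor expanding the remaining factors — $q^s = 1 + s\log q + \cdots$, $G(s) = 1 + O(s^2)$ since $G$ is even, $\frac{\gamma(1/2+s)^2}{\gamma(1/2)^2} = 1 + 2s \frac{\gamma'}{\gamma}(1/2) + \cdots$, and $\prod_{p|q}(1-p^{-1-2s}) = \frac{\varphi(q)}{q}\left(1 + 2s \sum_{p|q}\frac{\log p}{p-1} + \cdots\right) = \frac{\varphi(q)}{q}(1 + 2s\,\theta(q) + \cdots)$ — the coefficient of $s^{-1}$ in the product, i.e. the residue, works out to $\frac{\varphi(q)}{q}\left[\tfrac12 \log q + \gamma_0 + \tfrac{\gamma'}{\gamma}(1/2) + \theta(q)\right]$, as claimed. (The factor $\tfrac12$ on $\log q$ comes from the $\tfrac{1}{2s}$ in the Laurent expansion of $\zeta(1+2s)$ multiplying $s\log q$.)

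For the error term, on the shifted line $\Re(s) = -1/4+\varepsilon$ the integrand is $\ll q^{-1/4+\varepsilon} \cdot \frac{\varphi(q)}{q} |G(s)| |\gamma(1/2+s)|^2 |\zeta(1+2s)|/|s|$; the rapid decay of $\gamma(1/2+s)^2$ (Stirling) together with the polynomial bound on $\zeta(1+2s)$ and boundedness of $G$ makes the integral converge and yields a bound $O(q^{-1/4+\varepsilon})$, which is more than enough for the stated $O(q^{-1/2+\varepsilon})$ — in fact one can shift further left, say to $\Re(s) = -1/2+\varepsilon$, to get exactly $O(q^{-1/2+\varepsilon})$, the only constraint being to remain inside the strip $-4<\Re(s)<4$ where $G$ is controlled and to avoid the trivial zeros of $\zeta$ (harmless here since $\Re(1+2s)>0$). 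I do not expect any serious obstacle: this is a standard residue computation. The one point requiring a little care is the bookkeeping in the double-pole Laurent expansion — making sure the cross terms between $\zeta(1+2s)$'s pole and the linear terms of $q^s$, $\gamma(1/2+s)^2/\gamma(1/2)^2$, and $\prod_{p|q}(1-p^{-1-2s})$ are all collected correctly — but this is routine.
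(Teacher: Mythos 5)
Your proposal is correct and takes essentially the same approach as the paper: the paper's proof of this lemma simply declares it ``a standard contour-shifting argument, with a tedious residue calculation'' and defers to \cite[Lemma 4.1]{IS}, and your Mellin-transform, shift-to-the-left, collect-the-double-pole residue argument is precisely that computation carried out. The Laurent-expansion bookkeeping you outline matches the stated main term, and your error analysis on the shifted line is sound.
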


\begin{proof}
This is a standard contour-shifting argument, with a tedious residue calculation.  See for instance \cite[Lemma 4.1]{IS} for details.
% Applying the definition of $V$ gives that
% \begin{equation}\label{dagnol}
%     \sum_{(n,q)=1} \frac{1}{n} V\left(\frac{n^2}{q}\right) = \frac{1}{2\pi i} \int_{(1)} \zeta_q(1+2s) q^s \frac{G(s)}{s} \frac{\gamma(1/2+s)^2}{\gamma(1/2)^2} ds,
% \end{equation}
% where $\displaystyle\zeta_q(u) = \zeta(u)\cdot \prod_{p|q}\left(1-\frac{1}{p^{u}}\right)$. 
% We shift the contour to the line $-1/2+\varepsilon$,
% picking up a pole at $s=0$.  
% The contribution from the new line of integration is $O(q^{-1/2+\varepsilon})$, so all that is left is to calculate the residue.  This is standard but tedious.
%  Firstly, 
% $\zeta(1+2s) = \frac{1}{2s} + \gamma_0 + O(s)$.  
% Secondly, denote $\eta_q(s) = \prod_{p|q} (1-p^{-1-2s})$. Then $\eta_q(0) = \prod_{p|q}(1-p^{-1}) = \frac{\varphi(q)}{q}$, and $\frac{\eta_q'}{\eta_q}(0) = 2\sum_{p|q}\frac{\log(p)}{p-1} = 2\theta(q)$. Hence,
% \begin{align}
%     \eta_q(s) 
%     = \eta_q(0) \left(1 + s\frac{\eta_q'(0)}{\eta_q(0)} + O(s^2)\right)
%     = \frac{\varphi(q)}{q} \left(1 + 2s\theta(q) + O(s^2)\right).
% \end{align}
% In addition, $q^s = 1 + s\log(q) + O(s^2)$ 
% and
% $\frac{G(s)}{s} = \frac{1}{s} + O(s)$
% since $G$ is even and $G(0)=1$. Lastly, 
%     $\frac{\gamma(1/2+s)^2}{\gamma(1/2)^2} = 1 + 2s\frac{\gamma'}{\gamma}(1/2) + O(s^2).$
% Combining all of these Laurent expansions gives the result.
\end{proof}

\section{Proof of Theorem \ref{main2}}
  The focus of this section is to prove Theorem \ref{main2}. 

\subsection{Diagonal Term} 
Since $d^2\preceq q$ implies that $d\prec q$, we may apply Lemma \ref{gambit} to give \eqref{eq:Mdef}.
We decompose $\CM$ into three terms:
$    \CM = \CM_{m=n} + \CM_{m>n} + \CM_{m<n}$,
where
\begin{align}
    \CM_{m=n} &:= \varphi(d) \sum_\pm \sum_{\substack{n \equiv \pm n \Mod{d}\\(n,q)=1}} \frac{1}{n} V\left(\frac{n^2}{q}\right),\label{m=n begin}\\
    \CM_{m>n} = \CM_{m>n}(\psi) &:= \varphi(d) \sum_\pm \sum_{\substack{m>n\geq1\\m \equiv \pm n \Mod{d}}} \frac{\psi(m)\overline\psi(n)}{\sqrt{mn}} V\left(\frac{mn}{q}\right),\label{m>n begin}
\end{align}
and where $\CM_{m<n}$ is given by a similar formula to \eqref{m>n begin} but with $1\leq m<n$.

\begin{lemma}\label{m=n end}
Let $(q,2) = 1$ and suppose $d\preceq q$. Then
\begin{equation*}
    \CM_{m=n} = \frac{\varphi(d)}{2}\frac{\varphi(q)}{q}\left[\log(q) + 2\gamma_0 + \tfrac{\Gamma'}{\Gamma}(1/4) - \log \pi + 2 \theta(q)\right] + O(dq^{-1/2+\varepsilon}).
\end{equation*}
\end{lemma}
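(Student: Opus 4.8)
The plan is to reduce $\CM_{m=n}$ to the sum appearing in Lemma~\ref{diag}. First I would unwind the condition $n \equiv \pm n \Mod d$ in \eqref{m=n begin}. For the plus sign it is automatic, so that term contributes $\varphi(d)\sum_{(n,q)=1} n^{-1} V(n^2/q)$. For the minus sign the condition reads $d \mid 2n$, and since $d$ is odd this forces $d \mid n$; as $d \mid q$ and $(n,q)=1$, this is impossible unless $d = 1$, in which case $d \preceq q$ forces $q = 1$ and the asserted identity is trivial (its error term $O(dq^{-1/2+\varepsilon})$ is then $O(1)$ and already dominates). Hence we may assume $d>1$, the minus-sign term is empty, and
\[
\CM_{m=n} = \varphi(d) \sum_{(n,q)=1} \frac{1}{n}\, V\!\left(\frac{n^2}{q}\right).
\]

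Next I would quote Lemma~\ref{diag} directly for the inner sum, obtaining
\[
\CM_{m=n} = \varphi(d)\,\frac{\varphi(q)}{q}\left[\tfrac12 \log q + \gamma_0 + \tfrac{\gamma'}{\gamma}(\tfrac12) + \theta(q)\right] + \varphi(d)\cdot O(q^{-1/2+\varepsilon}),
\]
and since $\varphi(d) \le d$ the error is $O(dq^{-1/2+\varepsilon})$, as claimed.

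Finally it remains to convert $\tfrac{\gamma'}{\gamma}(1/2)$ into the digamma value appearing in the statement. From $\gamma(s) = \pi^{-s/2}\Gamma(s/2)$ in \eqref{gamma} one has $\log\gamma(s) = -\tfrac{s}{2}\log\pi + \log\Gamma(s/2)$, hence $\tfrac{\gamma'}{\gamma}(s) = -\tfrac12\log\pi + \tfrac12\tfrac{\Gamma'}{\Gamma}(s/2)$, so $\tfrac{\gamma'}{\gamma}(1/2) = -\tfrac12\log\pi + \tfrac12\tfrac{\Gamma'}{\Gamma}(1/4)$. Substituting this and factoring $\tfrac12$ out of the bracket turns the main term into $\tfrac{\varphi(d)}{2}\tfrac{\varphi(q)}{q}\big[\log q + 2\gamma_0 + \tfrac{\Gamma'}{\Gamma}(1/4) - \log\pi + 2\theta(q)\big]$, which is exactly the claimed expression.

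There is no genuine obstacle in this argument; the only points needing a touch of care are verifying that the minus-sign term really drops out (which is where oddness of $d$ enters) and the elementary digamma bookkeeping required to recover the precise constant in the statement.
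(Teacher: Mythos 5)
Your proof is correct and follows essentially the same route as the paper: discard the minus-sign congruence using oddness of $d$, reduce to the Euler-product sum of Lemma~\ref{diag}, and do the $\gamma'/\gamma \mapsto \Gamma'/\Gamma$ bookkeeping. The only addition is your explicit handling of the degenerate case $d=1$ (hence $q=1$), which the paper silently glosses over but which is indeed covered by the error term.
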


\begin{proof}
Observe that we cannot simultaneously have $n \equiv -n \Mod{d}$ and $(n,q)=1$ since $d$ is odd.
% Therefore, the ``diagonal term" contribution from $m=n$ gives
% \begin{equation}
%     \CM_{m=n} = \varphi(d) \sum_{(n,q)=1} \frac{1}{n} V\left(\frac{n^2}{q}\right).
% \end{equation}
Applying Lemma \ref{diag} concludes the proof.
\end{proof}

\subsection{Remaining Setup} 
Note that by symmetry
\begin{equation}\label{symmetry}
    \CM_{m<n}(\psi) = \CM_{m>n}\left(\overline\psi\right).
\end{equation}
For this reason we largely focus on the terms with $m>n$.
Applying a dyadic partition of unity to \eqref{m>n begin} results in
$\CM_{m>n} =
\sum_\pm \CM_{m>n}^{\pm}$, with
\begin{equation*}
\CM_{m>n}^{\pm} =     
     \mathop{\sum\sum}_{M,N \text{ dyadic}} \frac{\varphi(d)}{\sqrt{MN}}  \sum_{\substack{m>n\geq1\\m \equiv \pm n \Mod{d}}} \psi(m)\overline\psi(n) W_{M,N}(m,n).
\end{equation*}
Here
\begin{equation}\label{W}
    W_{M,N}(m,n) = \sqrt{\frac{MN}{mn}} V\left(\frac{mn}{q}\right) \eta\left(\frac{m}{M}\right) \eta\left(\frac{n}{N}\right),
\end{equation}
with $W^{(j,k)}(m,n) \ll_{j,k} M^{-j}N^{-k}$ and $\supp(W_{M,N}(m,n)) \subseteq [M,2M]\times[N,2N]$.
By the rapid decay of $V$, we may assume that
\begin{equation}
\label{eq:MNsizes}
MN \ll q^{1+\varepsilon}
\end{equation}

Consider $\CM^{\pm}_{m>n}$.  Say $m= \pm n+dl$ with $l \geq 1$, and define 
\begin{equation}\label{method split}
    \CB^{\pm}_{m>n}(M,N) :=  \sum_{l \geq 1} \sum_{n \geq 1} \psi(\pm n +  dl)\overline\psi(n) W_{M,N}(\pm n + dl,n),
\end{equation}
so that
\begin{equation}\label{bound break}
    \CM^{\pm}_{m>n} = \mathop{\sum\sum}_{M,N \text{ dyadic}} \frac{\varphi(d)}{\sqrt{MN}} \CB^{\pm}_{m>n}(M,N).
\end{equation}

\subsection{Off-diagonal via Poisson in $l$}\label{far}
We will eventually split the dyadic summations of $\CM^{\pm}_{m>n}$ into two ranges depending on whether $M$ and $N$ are nearby or far apart.  In this section we develop a method that will be most useful when $M$ and $N$ are far apart (or, ``unbalanced").
  
  We will apply Poisson summation to the sum over $l$ in \eqref{method split}. First we observe some properties of the function
\begin{equation}\label{W+B}
    W^{\pm}_{n,d}(l) := W_{M,N}(\pm n +  dl,n),
\end{equation} 
namely that $W^{\pm}_{n,d}(l)$ is supported on 
\begin{equation}\label{W_B supp}
    l \in  \Big[\frac{M \mp n}{d},\frac{2M \mp n}{d} \Big]
\end{equation}
and 
\begin{equation}\label{W_B bound}
   \frac{d^j}{dx^j} W_{n,d}^{\pm}(x) \ll_j \left(\frac{M}{d}\right)^{-j}.
\end{equation}
We record some properties of $\widehat W$ for future use.
\begin{proposition}\label{W hat}
Let $A > 0$, and suppose $W(x)$ is a smooth function with support in an interval of length $A$ that satisfies $W^{(j)}(x) \ll_j A^{-j}$.  Then for any $C > 0$ we have
\begin{equation*}
\widehat{W}(x) \ll A (1 + |x| A)^{-C}.
\end{equation*}
\end{proposition}
  These properties follow directly from integration by parts, so the proof is omitted. Applying this to $W_{n,d}^{\pm}$, we have
  \begin{equation}
  \label{eq:Whatbound}
  \widehat{W_{n,d}^{\pm}}(x) \ll_C \frac{M}{d} \Big(1 + \frac{|x| M}{d} \Big)^{-C} \qquad (\forall C > 0).
  \end{equation}

\begin{lemma}\label{method B+}
Suppose $(q, 3)=1$, $d^2\preceq q \preceq d^3$
and that $M > 2^{10} N$.
Then
\begin{equation*}
    \CB^{\pm}_{m>n}(M,N) = \CA^{\pm}_{m>n}(M,N) + O\left(\frac{Nq^{1/2+\varepsilon}}{d}\right),
\end{equation*}
where
\begin{equation}
\label{eq:Aplusdef}
    \CA^{\pm}_{m>n}(M,N) = \left(\frac{-2 a_\psi}{q/d^2}\right) \varepsilon_{q } \frac{d}{\sqrt{q}}\; 
e_q( \overline{2a_\psi}(a_\psi-b_\psi)^2)   
    \sum_{n|b_\psi} \widehat{W^{\pm}_{n,d}} \left(\frac{\mp b_\psi d}{nq}\right).
\end{equation}
\end{lemma}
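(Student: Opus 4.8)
The plan is to apply Poisson summation in the variable $l$ to the sum defining $\CB^{\pm}_{m>n}(M,N)$ in \eqref{method split}, exploiting the key point that $\psi(\pm n + dl)\overline{\psi}(n) = \psi(1 + dl\overline{n})$ (when $(n,q)=1$) is periodic in $l$ with period $q/d$, by the Postnikov formula of Lemma \ref{postnikov}. First I would discard the terms with $(n,q)>1$, since those vanish, and write $l = v + (q/d)t$ with $v$ running modulo $q/d$ and $t \in \mz$. Summing over $t$ via Poisson turns the inner sum into $\frac{d}{q}\sum_{k \in \mz} \widehat{W^{\pm}_{n,d}}\big(\frac{kd}{q}\big) \sum_{v \Mod{q/d}} \psi(1 + dv\overline{n})\, e_q(-dkv\overline{n})$ (up to renaming $k \to -k$ and tracking the $\overline{n}$). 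A change of variable $v \mapsto vn$ (legitimate since $(n,q)=1$) converts the $v$-sum into precisely $\CS_{q,d}(\psi,-k\overline{n})$ in the notation of \eqref{post sum} — or rather $\CS_{q,d}(\psi, \mp k n)$ after carefully matching signs and the $\pm$ cases; I would keep the bookkeeping honest here.

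Next I would invoke Lemma \ref{sum eval}: the character sum $\CS_{q,d}(\psi, k')$ vanishes unless $k' \equiv -a_\psi \Mod{d}$, in which case it equals $\varepsilon_q \sqrt{q}\, e_q(\overline{2a_\psi}(k'+a_\psi)^2)\left(\frac{-2a_\psi}{q/d^2}\right)$. Imposing the congruence $k' \equiv -a_\psi \Mod{d}$ on the relevant combination of $k$ and $n$ forces, after using $0 < |b_\psi| < d/2$ and the definition of $b_\psi$ as the balanced residue of $a_\psi \Mod d$, that $n \mid b_\psi$ and $k$ is pinned down (up to the residue structure) in terms of $b_\psi$, $n$, and the sign. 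Substituting the closed-form evaluation and simplifying the quadratic-in-$k'$ phase — using $k' + a_\psi \equiv a_\psi - b_\psi$ in the appropriate sense so that $e_q(\overline{2a_\psi}(k'+a_\psi)^2)$ becomes the constant $e_q(\overline{2a_\psi}(a_\psi - b_\psi)^2)$ — should produce exactly the claimed main term $\CA^{\pm}_{m>n}(M,N)$ of \eqref{eq:Aplusdef}, with the $\widehat{W^{\pm}_{n,d}}$ evaluated at the argument $\mp b_\psi d/(nq)$ coming from the surviving value of $k$.

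It remains to bound the error, i.e.\ the contribution of the non-main values of $k$. Using the decay estimate \eqref{eq:Whatbound}, namely $\widehat{W^{\pm}_{n,d}}(x) \ll_C \frac{M}{d}(1 + \frac{|x|M}{d})^{-C}$, together with the trivial bound $|\CS_{q,d}(\psi,\cdot)| \leq \sqrt{q}$ from Lemma \ref{sum eval}, and summing the geometric-type tail over $k$ with $|kd/q| \gg d/M$ (there are $O(1 + q/M)$ relevant $k$ up to the effective cutoff, and the tail beyond is negligible), together with summing over $n \ll N$, I would arrive at a bound of the shape $\frac{d}{q}\cdot \sqrt{q} \cdot \frac{M}{d} \cdot N \cdot (\text{something})$; inserting $M \ll q^{1+\varepsilon}/N$ from \eqref{eq:MNsizes} and using the hypothesis $M > 2^{10}N$ (which guarantees the support \eqref{W_B supp} behaves well and that $l \asymp M/d$) should collapse this to $O(Nq^{1/2+\varepsilon}/d)$ as stated. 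The main obstacle I anticipate is the combinatorial bookkeeping of the two sign cases $\pm$ and the precise way the congruence from Lemma \ref{sum eval} selects $n \mid b_\psi$ and the value of $k$: getting the phase $e_q(\overline{2a_\psi}(a_\psi-b_\psi)^2)$ and the Jacobi symbol $\left(\frac{-2a_\psi}{q/d^2}\right)$ to come out with exactly the right normalization requires care, especially reconciling the $\overline 2$ in Lemma \ref{modularity}(2) with the $\overline{4B} = \overline{8a_\psi}$-type term produced by Lemma \ref{quad sum}; everything else is routine Poisson summation and estimation.
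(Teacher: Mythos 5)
Your overall strategy is the same as the paper's: Poisson in $l$ exploiting Postnikov periodicity, a change of variable to produce $\CS_{q,d}(\psi, jn)$, evaluation via Lemma \ref{sum eval}, identification of the main term, and estimation of the remainder via the decay of $\widehat{W^{\pm}_{n,d}}$. The structure is right, but the error estimate has a genuine gap: as you've set it up, the bound $\frac{d}{q}\cdot\sqrt{q}\cdot\frac{M}{d}\cdot N\cdot\frac{q}{M}=N\sqrt{q}$, and neither $MN\ll q^{1+\varepsilon}$ nor $M>2^{10}N$ can bring this down to $Nq^{1/2+\varepsilon}/d$ — those constraints are insensitive to $d$ entirely. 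The factor $1/d$ comes from the vanishing condition in Lemma \ref{sum eval}: $\CS_{q,d}(\psi,jn)=0$ unless $jn\equiv -a_\psi\pmod d$, and this congruence together with the exclusion $jn\neq -b_\psi$ forces $|jn|\geq d/2$ (since $(a_\psi,d)=1$, so $b_\psi\neq 0$, and $|b_\psi|<d/2$). Writing $k=jn$, the number of admissible $k$ with $|k|\ll Nq^{1+\varepsilon}/M$ is then $\ll Nq^{1+\varepsilon}/(Md)$ rather than $\ll Nq^{1+\varepsilon}/M$, and a divisor bound counts the factorizations $k=jn$. With that factor of $d$ recovered, the total becomes $\frac{M}{\sqrt{q}}\cdot q^{\varepsilon}\cdot\frac{Nq^{1+\varepsilon}}{Md}=Nq^{1/2+\varepsilon}/d$, matching the lemma. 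Without explicitly invoking the congruence in the error estimate, the claimed bound is unproved.

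A smaller imprecision: you say the congruence $k'\equiv -a_\psi\pmod d$ ``forces'' $n\mid b_\psi$. It does not — the congruence is satisfied by infinitely many $(j,n)$. What singles out $n\mid b_\psi$ is that the main term is extracted from the single value $jn=-b_\psi$ (the unique smallest nonzero solution of the congruence, by \eqref{b range}), and \emph{that} choice requires $n\mid b_\psi$; the other solutions are exactly what the error term controls. You should separate these two steps cleanly. The rest — the sign bookkeeping, the change of variable $v\mapsto vn$, the evaluation of the quadratic phase at $k'+a_\psi=a_\psi-b_\psi$, and the role of $M>2^{10}N$ in freeing the $l$-sum — is all consistent with the paper's argument.
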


\begin{proof}
The condition that $M > 2^{10} N$ ensures that $W_{n,d}^{\pm}(l)$ is supported on $l \asymp \frac{M}{d}$ (recall \eqref{W_B supp}).  This is convenient because in particular we can extend the sum over $l \geq 1$ to all $l \in \mz$ without altering the sum.
Applying Poisson summation with respect to $l$ in \eqref{method split} gives 
\begin{equation*}
    \CB^{\pm}_{m>n}(M,N) =  \sum_{n \geq 1}\overline\psi(n) \Big( \frac{d}{q}\sum_{j\in\BZ}\sum_{u \Mod{\frac{q}{d}}} \psi(\pm n+du)\, e_q(dju) \widehat{W^{\pm}_{n,d}}\Big(\frac{dj}{q}\Big)\Big).
\end{equation*}
Replacing $u$ by $\pm nu$, $j$ by $\pm j$, and recalling $\psi$ is even gives
\begin{equation*}
    \CB^{\pm}_{m>n}(M,N) = \frac{d}{q}\; \mathop{\sum_{n\geq1}\sum_{j\in\BZ}}_{(n,q)=1} \widehat{W^{\pm}_{n,d}}\Big(\frac{\pm dj}{q}\Big) 
\underbrace{    
    \sum_{u \Mod{\frac{q}{d}}} \psi(1+du)\, e_q(djnu)
}_{ \CS_{q,d}(\psi, jn) }.
\end{equation*}
Hence, Lemma \ref{sum eval} yields
\begin{equation}
\label{eq:Bmn+formulaPoissoninell}
    \CB^{\pm}_{m>n}(M,N) = \left(\frac{-2 a_\psi}{q/d^2}\right) \varepsilon_{q } \frac{d}{\sqrt{q}}\;\mathop{\sum_{n\geq1}\sum_{j\neq0}}_{jn\equiv -a_\psi\Mod{d}} \widehat{W^{\pm}_{n,d}}\left(\frac{\pm dj}{q}\right) 
e_q(\overline{2a_\psi}(jn+a_\psi)^2),
\end{equation}
where the condition $(n,q)=1$ is now accounted for by $jn\equiv -a_\psi\Mod{d}$, recalling that $(a_{\psi}, q/d) = 1$ and that $d$, $q/d$, and $q$ all share the same prime factors. Since $a_\psi \in \BZ/(q/d)\BZ$ and $d|\frac{q}{d}$, let $a_\psi \equiv b_\psi \Mod{d}$ for $b_\psi \in \BZ/d\BZ$ such that
\begin{equation}\label{b range}
    0<|b_\psi|<\frac{d}{2}.
\end{equation}
The contribution from $jn=-b_\psi$ gives $\CA^{\pm}_{m>n}(M,N)$, while we will use $E.T.$ to denote the remaining terms. Then
\begin{equation}
\label{eq:ETphonehome}
    |E.T.| \leq \frac{d}{\sqrt{q}}\;\mathop{\sum_{n\geq1}\sum_{j\neq0}}_{\substack{jn\equiv -a_\psi\Mod{d}\\jn\neq -b_\psi}} \left|\widehat{W^{\pm}_{n,d}}\left(\frac{\pm dj}{q}\right)\right|.
\end{equation}
Recalling \eqref{eq:Whatbound}
shows this sum may be truncated at $|j| \ll \frac{q^{1+\varepsilon}}{M}$, with a negligible error.
We also have that $n \asymp N$. Therefore, letting $k=jn$, then the non-negligible contribution comes from $|k| \ll \frac{Nq^{1+\varepsilon}}{M}$, and for each $k$, the number of ways to factor $k=jn \neq 0$ is $O(q^{\varepsilon})$. Hence 
\begin{equation}
\label{eq:ETphonehome2}
    E.T. \ll q^{-2022} + q^\varepsilon\Big(\frac{M}{\sqrt{q}}\; \sum_{\substack{0<|k|\ll\frac{Nq^{1+\varepsilon}}{M}\\k\equiv -a_\psi\Mod{d}\\k\neq -b_\psi}} 1\Big) \ll \frac{Nq^{1/2+\varepsilon}}{d}.
\end{equation}
In this estimation we have used that $|k| \geq d/2$ following from $k \equiv - b_{\psi} \pmod{d}$ with $k \neq -b_{\psi}$, and using \eqref{b range}.
\end{proof}

\subsection{Off-diagonal via Poisson in $n$}\label{near}
Returning to \eqref{method split}, we now develop a method designed to treat the complementary range where $M$ and $N$ are nearby. Define the function
\begin{equation*}
    W^{\pm}_{dl}(n) := W_{M,N}(\pm n+dl,n),
\end{equation*}
which is supported on $n \in [N, 2N]$,
and satisfies
\begin{equation}\label{W_A bound}
    \frac{d^j}{dx^j} W_{dl}^{\pm} (x) \ll_j N^{-j},
    \qquad
    \widehat{W_{dl}^{\pm}}(x) \ll N (1+|x|N)^{-C}.
\end{equation}
 
\begin{lemma}\label{A bound}
Suppose $d \prec q$.
We have
\begin{equation*}
    \CB^{\pm}_{m>n}(M,N) \ll q^{\varepsilon}\left( \frac{Mq^{1/2}}{d^{3/2}} + \frac{M^{1/4}N}{q^{1/4}} \right).
\end{equation*}
\end{lemma}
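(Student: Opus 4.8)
The plan is to dualize the sum over $n$ in \eqref{method split} by Poisson summation, which is the natural complement to the Poisson-in-$l$ method of Section \ref{far}: dualizing in $n$ turns the incomplete $n$-sum into a \emph{complete} exponential sum modulo $q$, which is exactly a Heath-Brown sum $S(q;\psi,dl,\cdot)$, so Lemma \ref{HB result} applies directly. For each fixed $l$ the function $n \mapsto \psi(\pm n + dl)\overline{\psi}(n)$ has period $q$, so Poisson summation in $n$ gives
\begin{equation*}
\CB^{\pm}_{m>n}(M,N) = \frac{1}{q} \sum_{l \geq 1} \sum_{j \in \mz} \widehat{W^{\pm}_{dl}}\!\left(\frac{j}{q}\right) \sum_{v \Mod{q}} \psi(\pm v + dl)\,\overline{\psi}(v)\, e_q(jv),
\end{equation*}
and the inner sum equals $S(q;\psi,dl,\pm j)$ after the change of variables $v \mapsto -v$ in the minus case (using that $\psi$ is even). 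The $l$-sum is supported on $O(M/d)$ values, and $m \geq 1+d$ forces $M \gg d$; by \eqref{W_A bound}, $\widehat{W^{\pm}_{dl}}(j/q) \ll N(1+|j|N/q)^{-C}$ for every $C>0$.

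For the term $j = 0$, I would combine $\widehat{W^{\pm}_{dl}}(0) \ll N$ with the bound \eqref{HB2} taken with $q_0 = d$ and $A \asymp M/d$, which gives $\sum_{l}|S(q;\psi,dl,0)| \ll Mq^{\varepsilon}$, hence a contribution $\ll MNq^{\varepsilon}/q$; since $M \ll q^{1+\varepsilon}$ by \eqref{eq:MNsizes} this is $\ll q^{\varepsilon} M^{1/4}N q^{-1/4}$, absorbed by the claimed bound. For $j \neq 0$, the rapid decay of $\widehat{W^{\pm}_{dl}}$ lets me truncate at $|j| \ll q^{1+\varepsilon}/N$ with negligible error (bounding the tail trivially via $|S(q;\psi,dl,j)| \leq q$ and $\#\{l\} \ll q$), and on the remaining range $\widehat{W^{\pm}_{dl}}(j/q) \ll N$. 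Thus the $j \neq 0$ part is
\begin{equation*}
\ll \frac{N}{q} \sum_{l \,\asymp\, M/d}\ \sum_{1 \leq |j| \,\ll\, q^{1+\varepsilon}/N} \big| S(q;\psi,dl,\pm j) \big| \;+\; q^{-2022},
\end{equation*}
and applying \eqref{HB1} with $q_0 = d$, $A \asymp M/d$, $B \asymp q^{1+\varepsilon}/N$ yields $q^{1/2+\varepsilon}\big( ABd^{-1/2} + (qAd)^{1/4}\big) \ll q^{1/2+\varepsilon}\big( Mq^{\varepsilon}d^{-3/2}N^{-1} + (qM)^{1/4}\big)$; multiplying by $N/q$ gives exactly $q^{\varepsilon}\big( Mq^{1/2}d^{-3/2} + M^{1/4}Nq^{-1/4}\big)$.

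The only substantive decision is the first one: dualizing in $n$ rather than in $l$, so that the off-diagonal becomes a family of Heath-Brown sums $S(q;\psi,dl,\cdot)$ indexed by $l \asymp M/d$; everything afterwards is bookkeeping — matching the parameters $(A,B,q_0) = (M/d,\, q^{1+\varepsilon}/N,\, d)$ in Lemma \ref{HB result}, disposing of the $\pm$ sign using that $\psi$ is even, and checking that the $j=0$ term and the truncation tail are dominated by the stated bound. (Here we use that $q$ is odd, as required by Lemma \ref{HB result}.)
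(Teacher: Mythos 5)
Your proposal is correct and matches the paper's proof essentially step for step: both apply Poisson summation in $n$ to produce complete sums $S(q;\psi,dl,k)$, truncate at $|k|\ll q^{1+\varepsilon}/N$, and then invoke Lemma~\ref{HB result} with $q_0=d$, $A\asymp M/d$, $B\asymp q^{1+\varepsilon}/N$, treating $k=0$ via \eqref{HB2} and $k\neq 0$ via \eqref{HB1}. The only cosmetic difference is that the paper folds the $k=0$ contribution $MNq^{-1+\varepsilon}$ into the final display and then drops it using $MN\ll q^{1+\varepsilon}$, whereas you dispose of it separately up front; the substance and the parameter matching are identical.
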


\begin{proof}
We present the details for $\CB^{+}$; the case of $\CB^{-}$ is nearly identical.
Applying Poisson summation to \eqref{method split} in the variable $n$ gives  
\begin{align*}
    \CB^+_{m>n}(M,N) =  
     \frac{1}{q}\;\sum_{l\geq1} \sum_{k\in\BZ} \widehat{W_{dl}^{+}} 
     \left(\frac{k}{q}\right) \sum_{u \Mod{q}} \psi(u+dl) \overline\psi(u) \,  e_q(ku).
\end{align*}
Using \eqref{W_A bound} we may restrict attention to $|k| \ll \frac{q^{1+\varepsilon}}{N}$ with a negligible error.
 Hence
\begin{equation*}
    \CB^+_{m>n}(M,N) \ll q^{-2022} + \frac{N}{q}\;\sum_{1\leq l\ll\frac{M}{d}}\sum_{0\leq|k|\ll\frac{q^{1+\varepsilon}}{N}}  \Big| \sum_{u \Mod{q}} \psi(u+dl)\overline\psi(u)\, e_q(ku) \Big|,
\end{equation*}
since the variable $l$ satisfies $l\ll\frac{M}{d}$ by the support of the test functions. The innermost sum can be recognized as $S(q;\psi,dl,k)$ from \eqref{HB sum}.  
Applying Lemma \ref{HB result} gives
\begin{align*}
    \CB^+_{m>n}(M,N) &\ll MNq^{-1+\varepsilon} + \frac{N}{q}\;\sum_{1\leq l \ll\frac{M}{d}}\sum_{1\leq|k|\ll\frac{q^{1+\varepsilon}}{N}} \Big| \sum_{u \Mod{q}} \psi(u+dl)\overline\psi(u)\, e_q(ku) \Big|\\
    &\ll q^\varepsilon\left(\frac{Mq^{1/2}}{d^{3/2}}+\frac{M^{1/4}N}{q^{1/4}}+\frac{MN}{q}\right).
\end{align*}
The third term above may be dropped since
$\frac{MN}{q} \ll q^{\varepsilon} \frac{M^{1/4}N}{q^{1/4}}$, since $\frac{M}{q} \leq \frac{MN}{q} \ll q^{\varepsilon}$.
\end{proof}

  Out of a convenience which will become evident in Section \ref{assemble}, we would prefer to include $\CA^{\pm}_{m>n}(M,N)$ in Lemma \ref{A bound}, in spite of the fact that it does not naturally manifest using the methods of this section. 
\begin{lemma}\label{method A+}
Suppose $d \prec q$ and $d \leq q^{2/3}$.
We have
\begin{equation*}
    \CB^{\pm}_{m>n}(M,N) = \CA^{\pm}_{m>n}(M,N) + O\left(q^{\varepsilon}\left( \frac{Mq^{1/2}}{d^{3/2}} + \frac{M^{1/4}N}{q^{1/4}} \right)\right).
\end{equation*}
\end{lemma}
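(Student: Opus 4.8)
The plan is to observe that this lemma is essentially a repackaging of Lemma~\ref{A bound}: that result already gives $\CB^{\pm}_{m>n}(M,N) \ll q^{\varepsilon}\big( \tfrac{Mq^{1/2}}{d^{3/2}} + \tfrac{M^{1/4}N}{q^{1/4}} \big)$, so by the triangle inequality it suffices to show that the quantity $\CA^{\pm}_{m>n}(M,N)$ being inserted is itself no larger than this error term under the hypothesis $d \leq q^{2/3}$. Then one simply writes $\CB^{\pm}_{m>n}(M,N) = \CA^{\pm}_{m>n}(M,N) + \big(\CB^{\pm}_{m>n}(M,N) - \CA^{\pm}_{m>n}(M,N)\big)$, and the parenthesized difference is bounded by $|\CB^{\pm}_{m>n}(M,N)| + |\CA^{\pm}_{m>n}(M,N)|$, each satisfying the desired bound.

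So the only real step is a crude estimate of $\CA^{\pm}_{m>n}(M,N)$. First I would take absolute values in the defining formula \eqref{eq:Aplusdef}: the scalar prefactor $\big(\tfrac{-2a_\psi}{q/d^2}\big)\,\varepsilon_q\, e_q(\overline{2a_\psi}(a_\psi-b_\psi)^2)$ has modulus at most $1$ (the Jacobi symbol vanishes unless $(2a_\psi,q/d^2)=1$, in which case $\CA^{\pm}_{m>n}(M,N)=0$), leaving $\tfrac{d}{\sqrt{q}}$ times a sum over divisors $n \mid b_\psi$ of $\big|\widehat{W^{\pm}_{n,d}}(\mp b_\psi d/(nq))\big|$, where only $n \asymp N$ contribute since $W^{\pm}_{n,d}$ vanishes identically unless $n\in[N,2N]$ (support of $W_{M,N}$ in its second variable). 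Next I would invoke the Fourier decay bound \eqref{eq:Whatbound} with the trivial choice $C=0$ to bound each summand by $O(M/d)$, and the divisor bound to see there are $O(q^\varepsilon)$ such $n$ (here $0<|b_\psi|<d/2\leq q$ by \eqref{b range}). This gives $\CA^{\pm}_{m>n}(M,N) \ll q^\varepsilon \cdot \tfrac{d}{\sqrt{q}} \cdot \tfrac{M}{d} = q^\varepsilon \tfrac{M}{\sqrt{q}}$.

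Finally I would use the hypothesis $d \leq q^{2/3}$, equivalently $d^{3/2} \leq q$, to conclude $\tfrac{M}{\sqrt{q}} = \tfrac{Mq^{1/2}}{q} \leq \tfrac{Mq^{1/2}}{d^{3/2}}$, so that $\CA^{\pm}_{m>n}(M,N)$ is absorbed into the first term of the error in Lemma~\ref{A bound}; combined with the triangle-inequality observation above, this yields the claimed identity. There is no genuine obstacle here: the substantive input, namely the Heath-Brown sum bound of Lemma~\ref{HB result}, was already spent in the proof of Lemma~\ref{A bound}, and the present lemma merely arranges $\CA^{\pm}_{m>n}(M,N)$ into the formula so that it can later be matched cleanly against the output of Lemma~\ref{method B+} in the balanced/unbalanced dyadic decomposition. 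The only thing to verify is the elementary size comparison $M/\sqrt{q} \ll Mq^{1/2}/d^{3/2}$, which is exactly what the constraint $d \leq q^{2/3}$ buys.
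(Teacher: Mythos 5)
Your proposal is correct and follows essentially the same route as the paper: invoke Lemma \ref{A bound} for $\CB^{\pm}_{m>n}(M,N)$, bound $|\CA^{\pm}_{m>n}(M,N)| \ll q^{\varepsilon} M/\sqrt{q}$ via the triangle inequality, the divisor bound, and the trivial Fourier-transform estimate from \eqref{eq:Whatbound}, and then absorb this into the first error term using $d \leq q^{2/3}$, i.e.\ $q^{-1/2} \leq d^{-3/2} q^{1/2}$. (Your parenthetical about the Jacobi symbol has the logic slightly garbled — it is nonzero precisely when $(2a_\psi, q/d^2)=1$ — but the only fact you use, that the prefactor has modulus at most one, is correct and matches the paper.)
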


\begin{proof}
We treat the $+$ case, since the $-$ case is very similar.
By the triangle inequality and \eqref{eq:Whatbound}, we have
\begin{equation*}
    |\CA^{+}_{m>n}(M,N)| \leq \frac{d}{\sqrt{q}}\;\sum_{n|b_\psi} \left|\widehat{W_{n,d}^{+}}\left(\frac{-b_\psi d}{nq}\right)\right|
\ll \frac{M}{\sqrt{q}} q^{\varepsilon}    
    .
\end{equation*}
Finally, note that $q^{-1/2} \leq d^{-3/2} q^{1/2}$, since $d \leq q^{2/3}$, so this bound on $\mathcal{A}_{m>n}^{+}$ is absorbed by the error term.
\end{proof}

\subsection{Combining $\CM^+_{m>n}$ and $\CM^-_{m>n}$}\label{assemble}
Recall \eqref{bound break}.
Define
\begin{equation}
\label{eq:CWdef}
    \CW = \mathop{\sum\sum}_{M,N \text{ dyadic}} \frac{1}{\sqrt{MN}}\sum_{n|b_\psi} \left(\widehat{W^{+}_{n,d}}\left(\frac{-b_\psi d}{nq}\right) + \widehat{W^{-}_{n,d}}\left(\frac{b_\psi d}{nq}\right)\right).
\end{equation}
As a first step, we have the following.
\begin{lemma}\label{W combo}
We have
\begin{equation}
\label{eq:Wcomboapprox}
    \CW = \frac{\sqrt{q}}{2d}  e_q(-b_{\psi})   \frac{\sigma_0(|b_\psi|)}{\sqrt{|b_\psi|}}
    = \frac{\sqrt{q}}{2d} \frac{\sigma_0(|b_\psi|)}{\sqrt{|b_\psi|}} + O((dq)^{-1/2} q^{\varepsilon})
    .
\end{equation}
\end{lemma}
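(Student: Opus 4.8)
The plan is to evaluate $\CW$ by unfolding the definitions of $\widehat{W^{\pm}_{n,d}}$ and reassembling the dyadic sums over $M,N$ back into a single smooth sum, at which point the integral evaluation of Lemma \ref{Mellin} produces the closed form. First I would write, for each divisor $n$ of $|b_\psi|$, the Fourier integral
\begin{equation*}
  \widehat{W^{\pm}_{n,d}}\Big(\frac{\mp b_\psi d}{nq}\Big) = \int_{\mr} W_{M,N}(\pm n + dl, n)\, e\Big(\pm \frac{b_\psi d l}{nq}\Big)\, dl,
\end{equation*}
substitute the definition \eqref{W} of $W_{M,N}$, and change variables from $l$ back to $m = \pm n + dl$ so that the $M$-dependence is entirely inside $\eta(m/M)$ and the $N$-dependence inside $\eta(n/N)$. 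Summing over dyadic $M$ collapses the partition of unity in $m$ (leaving an integral over $m \in (0,\infty)$), and summing over dyadic $N$ collapses the partition in $n$; since $n \mid b_\psi$ is a fixed small integer, the sum over dyadic $N$ simply picks out the relevant scale and the $\eta(n/N)$ factors telescope to $1$.

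After this reassembly, and recalling $V(mn/q) \to 1$ as the argument tends to $0$ (up to negligible error, by the rapid decay built into $V$ and the fact that $|b_\psi| d/(nq)$ forces $m$ small), $\CW$ becomes essentially
\begin{equation*}
  \sum_{n \mid |b_\psi|} \frac{1}{n} \int_0^\infty \frac{1}{\sqrt{m}}\Big( e\big(\tfrac{b_\psi m}{nq}\big) + e\big(-\tfrac{b_\psi m}{nq}\big)\Big)\, dm
  = 2 \sum_{n \mid |b_\psi|} \frac{1}{n} \int_0^\infty \frac{\cos(2\pi b_\psi m/(nq))}{\sqrt{m}}\, dm,
\end{equation*}
where the two terms in \eqref{eq:CWdef} combine into a cosine. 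Applying Lemma \ref{Mellin} with $s = 0$ and $k = b_\psi/(nq)$ evaluates each integral as $\frac{\Gamma(1/2)}{(2\pi |b_\psi|/(nq))^{1/2}} \cos(\pi/4)$, which is a constant multiple of $\sqrt{nq/|b_\psi|}$. Summing over $n \mid |b_\psi|$ and writing $\sum_{n\mid |b_\psi|} \sqrt{n}/n \cdot \sqrt{n} = \sigma_0(|b_\psi|)$ (after the $n$-dependence in the numerator cancels the $n$ in the denominator), together with tracking the normalizing constants, yields the leading term $\frac{\sqrt q}{2d}\,\sigma_0(|b_\psi|)/\sqrt{|b_\psi|}$; the phase $e_q(-b_\psi)$ records the $O(b_\psi/q)$ Taylor correction from replacing $m = \pm n + dl$ exactly versus the center of support, and since $|b_\psi| < d/2 < q$ this phase is $1 + O(|b_\psi|/q)$, giving the stated second form. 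The error terms from truncating $V$ away from $1$, from the boundary effects in converting dyadic sums to integrals, and from the discrepancy between the discrete sum over $l$ and its integral are all $O((dq)^{-1/2} q^\varepsilon)$ after using the support constraint $m \ll |b_\psi| d / n \cdot q^\varepsilon$ is false---rather $m$ ranges up to size $\sim nq/|b_\psi|$---so the relevant size is $\sqrt{nq/|b_\psi|} \asymp \sqrt{q/|b_\psi|}$ and the relative error is a negative power of $q$ times $q^\varepsilon$.

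The main obstacle I expect is the bookkeeping in the reassembly step: one must check that extending each dyadic piece to the full integral and swapping the order of the $M$- and $N$-summations with the $n$-sum and the Fourier integral is legitimate with only negligible error, and in particular that the smooth cutoff $V(mn/q)$ can be replaced by $1$ throughout the effective range of integration (this requires knowing the Fourier variable $b_\psi d/(nq)$ is small enough that the integral localizes to $m$ with $mn/q$ small, which is where the hypothesis $|b_\psi| < d/2$ and $d \prec q$ enter). A secondary technical point is correctly identifying the Gauss-sum--free constant: Lemma \ref{Mellin} at $s=0$ gives $\cos(\pi/4) = 1/\sqrt2$ and $\Gamma(1/2) = \sqrt\pi$, and these must combine with the $\sqrt{2\pi}$ in the denominator and the overall $\varphi(d)/d \cdot$-type normalizations carried from \eqref{eq:Aplusdef} to land exactly on the factor $\frac{1}{2}$; I would verify this by checking the special case $b_\psi = 1$ against the heuristic \eqref{eq:handwavy}.
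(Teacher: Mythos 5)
Your overall route — unfold the Fourier transforms, reassemble the dyadic partition of unity into a single smooth integral, and then appeal to Lemma~\ref{Mellin} — is the same as the paper's, and the change of variables to reach a cosine integral is on target. The paper in fact reaches the intermediate form
$\CW = 2\frac{\sqrt{q}}{d}\,e_q(-b_\psi)\sum_{n\mid b_\psi}\int_0^\infty V(n^2 x)\cos(2\pi b_\psi x)\,\frac{dx}{\sqrt{x}}$,
so the skeleton of your argument is sound.

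The genuine gap is the step where you discard $V$ and evaluate $\int_0^\infty\cos(2\pi kx)\,\tfrac{dx}{\sqrt{x}}$ directly. This is not a harmless approximation; it changes the answer by a factor of $2$. Dropping $V$ and applying Lemma~\ref{Mellin} at $s=0$ gives $\tfrac{1}{2\sqrt{|k|}}$, which would lead to $\CW = \frac{\sqrt q}{d}\,e_q(-b_\psi)\,\frac{\sigma_0(|b_\psi|)}{\sqrt{|b_\psi|}}$ — twice the claimed value. The reason $V$ cannot be dropped is that after Poisson the effective range of $m$ extends up to $\asymp nq/|b_\psi|$, so the argument $mn/q = n^2 x$ of $V$ ranges up to $\asymp n^2/|b_\psi|$, which is as large as $|b_\psi|$ when $n\asymp|b_\psi|$; the $V$ cutoff and the cosine oscillation therefore act on the \emph{same} scale of $x\asymp 1/|b_\psi|$, and the conditionally convergent tail of $\int\cos(2\pi b_\psi x)\tfrac{dx}{\sqrt{x}}$ contributes a quantity comparable to the head. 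Your heuristic that the Fourier argument $b_\psi d/(nq)$ being small ``forces $m$ small'' is backwards: a small Fourier frequency means the transform is spread over \emph{large} $m$.

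What you are missing is the mechanism that actually produces the factor $\tfrac12$: one keeps $V$, writes it via its Mellin integral \eqref{V}, applies Lemma~\ref{Mellin} to the inner $x$-integral with $\Re(s)=1/4$, simplifies the gamma factors using Legendre duplication and reflection, and then observes that the resulting $s$-integrand
$\frac{G(s)}{s}\,\Gamma(\tfrac14+\tfrac{s}{2})\Gamma(\tfrac14-\tfrac{s}{2})\sum_{n\mid b_\psi}\bigl(\tfrac{|b_\psi|}{n^2}\bigr)^s$
is an \emph{odd} function of $s$ (the divisor sum is even under $s\mapsto -s$ by the pairing $n\leftrightarrow|b_\psi|/n$, and $G$ is even). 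For an odd integrand, the integral along $\Re(s)=1/4$ equals exactly one half of the residue at $s=0$, with no error — this is where the $\tfrac12$ comes from. Your plan to ``land exactly on the factor $\tfrac12$'' by tracking normalizations and checking against the heuristic \eqref{eq:handwavy} will not work, since \eqref{eq:handwavy} is itself stated only up to constants. There is also a smaller bookkeeping issue: your displayed form of $\CW$ carries a $\tfrac1n$ where a $\tfrac{1}{\sqrt n}$ should appear, and the claim that the $n$-dependence cancels to give $\sigma_0(|b_\psi|)$ does not follow from what you wrote; this suggests the reassembly step needs to be carried out more carefully, as in the paper.
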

\begin{proof}
Retracing the definitions, we have
\begin{equation*}
    W^\pm_{n,d}(l)     = 
    \Big(\frac{MN}{(\pm n+dl)n}\Big)^{1/2} V\left(\frac{(\pm n+dl)n}{q}\right) \eta\left(\frac{\pm n+dl}{M}\right) \eta\left(\frac{n}{N}\right).
\end{equation*}
After applying the Fourier transforms and assembling the dyadic partition of unity, we have
\begin{multline*}
    \CW = \sum_{n|b_\psi} \int_{-n/d}^\infty  V\left(\frac{(n+dy)n}{q}\right) e\left(\frac{b_\psi dy}{nq}\right) \frac{dy}{\sqrt{(n+dy)n}} \\+ \sum_{n|b_\psi} \int_{n/d}^\infty  V\left(\frac{(-n+dy)n}{q}\right) e\left(-\frac{b_\psi dy}{nq}\right) \frac{dy}{\sqrt{(-n+dy)n}}.
\end{multline*}
Changing variables results in
\begin{equation*}
    \CW = 2 \frac{\sqrt{q}}{d} 
    e_q(-b_{\psi})
 \sum_{n|b_\psi} \int_0^\infty V(n^2x)  \cos(2 \pi b_{\psi} x) \frac{dx}{\sqrt{x}}.
\end{equation*}
We next wish to
apply the definition of $V(x)$ from \eqref{V} and reverse the order of integration.  This formally gives
\begin{align*}
    \CW 
    &= 2 \frac{\sqrt{q}}{d} e_q(-b_{\psi})\; \sum_{n|b_\psi} \int_{(1/4)} \frac{G(s)}{s} \frac{\gamma(1/2+s)^2}{\gamma(1/2)^2 n^{2s}} \left( \int_0^\infty  x^{-s}  \cos(2 \pi b_{\psi} x) \frac{dx}{\sqrt{x}} \right) \frac{ds}{2 \pi i}.
\end{align*}
This interchange is a little delicate but can be justified in a few ways.  One option is to first truncate
the $x$-integral to a finite interval $[0, X]$ and let $X \rightarrow \infty$ after interchanging the integrals.
Lemma \ref{Mellin} now implies
\begin{equation*}
    \CW =2 \frac{\sqrt{q}}{d} e_q(-b_{\psi})\; \sum_{n|b_\psi} \int_{(1/4)} \frac{G(s)}{s} \frac{\gamma(1/2+s)^2}{\gamma(1/2)^2 n^{2s}}  \frac{\Gamma(1/2-s)}{(2\pi|b_\psi|)^{1/2-s}} \\  
    \cos(\tfrac{\pi}{2}(\tfrac12 - s))
    \frac{ds}{2 \pi i}.
\end{equation*}
Applying the definition of $\gamma(s)$ (found in \eqref{gamma}) and rearranging terms  produces
\begin{equation*}
    \CW =\frac{2\sqrt{q}e_q(-b_{\psi})}{ d \sqrt{2\pi |b_\psi|} \Gamma(1/4)^2} \;  \int_{(1/4)} \frac{G(s)}{s} \Gamma\left(\tfrac{1}{4}+\tfrac{s}{2}\right)^2 \Gamma(1/2-s) \cos\left(\tfrac{\pi}{4}-\tfrac{\pi s}{2}\right)\sum_{n|b_\psi} \left(\frac{2|b_\psi|}{n^2}\right)^s \frac{ds}{2\pi i }.
\end{equation*}
Using standard gamma function identities, one can show easily that
\begin{equation*}
\Gamma\left(\tfrac{1}{4}+\tfrac{s}{2}\right)^2 \Gamma(1/2-s) \cos\left(\tfrac{\pi}{4}-\tfrac{\pi s}{2}\right)
= \pi^{1/2} 2^{-\frac12 - s} \Gamma(\tfrac14 + \tfrac{s}{2})  \Gamma(\tfrac14 - \tfrac{s}{2}).
\end{equation*}
Hence
\begin{equation*}
 \CW =\frac{\sqrt{q}e_q(-b_{\psi})}{ d \sqrt{  |b_\psi|} \Gamma(1/4)^2} \;  \int_{(1/4)} \frac{G(s)}{s} 
 \Gamma(\tfrac14 + \tfrac{s}{2})  \Gamma(\tfrac14 - \tfrac{s}{2}) 
 \sum_{n|b_\psi} \left(\frac{|b_\psi|}{n^2}\right)^s \frac{ds}{2\pi i }.
\end{equation*}
Using the symmetry between divisors, it is apparent that
this integrand is an odd function.  
Therefore, the integral is half the residue at $s=0$, giving the claimed formula for $\CW$.

Finally, we deduce the approximation in \eqref{eq:Wcomboapprox}.  We have $e_q(-b_{\psi}) = 1 + O(q^{-1} |b_{\psi}|)$ by a Taylor expansion, and recalling $|b_{\psi}| \ll d$.  Inserting this into the formula for $\CW$ and using $\sigma_0(|b_{\psi}|) |b_{\psi}|^{1/2} \ll d^{1/2} q^{\varepsilon}$ completes the proof.
\end{proof}

\begin{lemma}\label{m>n end}
With $\CM_{m>n}(\psi)$ as defined in \eqref{m>n begin}, we have
\begin{equation*}
    \CM_{m>n}(\psi) = \CA_{m>n}(\psi) + O\Big(q^\varepsilon\Big(\frac{q^{1/2}}{d^{1/4}}+\frac{d}{q^{1/8}}\Big)\Big),
\end{equation*}
where
\begin{equation*}
    \CA_{m>n}(\psi) =  \left(\frac{-2 a_\psi}{q/d^2}\right) \varepsilon_{q }  \frac{\varphi(d)}{2}  
e_q(\overline{2a_\psi}(a_\psi-b_\psi)^2)    
     \frac{\sigma_0(|b_\psi|)}{\sqrt{|b_\psi|}}.
\end{equation*}
\end{lemma}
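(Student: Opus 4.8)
The plan is to assemble Lemma~\ref{m>n end} from the two off-diagonal estimates of this section. Recall from \eqref{bound break} that $\CM^{\pm}_{m>n} = \sum\sum_{M,N}\frac{\varphi(d)}{\sqrt{MN}}\CB^{\pm}_{m>n}(M,N)$ and $\CM_{m>n}(\psi)=\sum_\pm\CM^{\pm}_{m>n}$, where by \eqref{eq:MNsizes} we may restrict to $MN \ll q^{1+\varepsilon}$, and where $m>n$ with $m\asymp M$, $n\asymp N$ forces $M \gg N$ (hence also $N^2 \ll MN \ll q^{1+\varepsilon}$, i.e. $N \ll q^{1/2+\varepsilon}$). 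First I would fix a threshold $R$ and split the dyadic blocks into the \emph{unbalanced} ones, $M > RN$, and the \emph{balanced} ones, $M \leq RN$, eventually taking $R\asymp d^{1/2}$. One may assume $d$ exceeds any fixed constant: for bounded $d$ the estimate $\CM_{m>n}(\psi) \ll q^{1/2+\varepsilon}$ is immediate from the trivial bound and $\CA_{m>n}(\psi) \ll q^{\varepsilon}$, so the lemma holds trivially. For $d$ large, $R\asymp d^{1/2} > 2^{10}$, so Lemma~\ref{method B+} applies on every unbalanced block, while Lemma~\ref{method A+} applies on every block (since $d^2 \preceq q$ forces $d \leq q^{1/2}\leq q^{2/3}$). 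I would then use Lemma~\ref{method B+} on the unbalanced blocks and Lemma~\ref{method A+} on the balanced blocks; the crucial point is that both lemmas replace $\CB^{\pm}_{m>n}(M,N)$ by the \emph{same} secondary term $\CA^{\pm}_{m>n}(M,N)$ plus an error, so the main term collected is independent of where the split is drawn.

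For the main term, I would sum $\CA^{\pm}_{m>n}(M,N)$ over all dyadic blocks. By \eqref{eq:Aplusdef} the prefactor $\left(\frac{-2a_\psi}{q/d^2}\right)\varepsilon_{q}\frac{d}{\sqrt q}e_q(\overline{2a_\psi}(a_\psi-b_\psi)^2)$ depends on neither $M,N$ nor the sign, so
\[
\sum_{\pm}\sum\sum_{M,N}\frac{\varphi(d)}{\sqrt{MN}}\,\CA^{\pm}_{m>n}(M,N) = \left(\frac{-2a_\psi}{q/d^2}\right)\varepsilon_{q}\,\frac{d}{\sqrt q}\,e_q(\overline{2a_\psi}(a_\psi-b_\psi)^2)\,\varphi(d)\,\CW,
\]
where $\CW$ is precisely the quantity of \eqref{eq:CWdef} (the $\mp$ signs in the arguments of $\widehat{W^{\pm}_{n,d}}$ being exactly those there). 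Then Lemma~\ref{W combo} applies: substituting $\CW = \frac{\sqrt q}{2d}\frac{\sigma_0(|b_\psi|)}{\sqrt{|b_\psi|}} + O((dq)^{-1/2}q^{\varepsilon})$ and using $\frac{d}{\sqrt q}\cdot\frac{\sqrt q}{2d}=\tfrac12$ produces exactly $\CA_{m>n}(\psi)$, with an error $\ll \frac{d}{\sqrt q}\varphi(d)(dq)^{-1/2}q^{\varepsilon} \ll d^{3/2}q^{-1+\varepsilon} \ll q^{-1/4+\varepsilon}$ (using $d\leq q^{1/2}$), which is absorbed.

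It remains to bound the accumulated error terms, for which it suffices to use that there are only $O(q^{\varepsilon})$ contributing dyadic pairs $(M,N)$, together with $N/2 \leq M \ll q^{1+\varepsilon}$ and $N \ll q^{1/2+\varepsilon}$ noted above. On an unbalanced block $(N/M)^{1/2} < R^{-1/2}$, so Lemma~\ref{method B+} contributes at most $\frac{\varphi(d)}{\sqrt{MN}}\cdot\frac{Nq^{1/2+\varepsilon}}{d} = \frac{\varphi(d)q^{1/2+\varepsilon}}{d}(N/M)^{1/2} \ll \frac{\varphi(d)q^{1/2+\varepsilon}}{d\,R^{1/2}}$ per block, hence $\ll \varphi(d)q^{1/2+\varepsilon}/(dR^{1/2})$ in all. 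On a balanced block $(M/N)^{1/2}\leq R^{1/2}$ and $M^{-1/4}\ll N^{-1/4}$, so the two error terms of Lemma~\ref{method A+} contribute per block at most $\frac{\varphi(d)q^{1/2+\varepsilon}}{d^{3/2}}(M/N)^{1/2}\ll \frac{\varphi(d)q^{1/2+\varepsilon}R^{1/2}}{d^{3/2}}$ and $\frac{\varphi(d)q^{\varepsilon}}{q^{1/4}}\cdot\frac{N^{1/2}}{M^{1/4}}\ll \varphi(d)q^{-1/8+\varepsilon}$, hence $\ll \varphi(d)q^{1/2+\varepsilon}R^{1/2}/d^{3/2}$ and $\ll \varphi(d)q^{-1/8+\varepsilon}$ in all. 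Taking $R \asymp d^{1/2}$ balances the first two at size $\asymp \varphi(d)q^{1/2+\varepsilon}/d^{5/4} \leq q^{1/2+\varepsilon}/d^{1/4}$, and bounding $\varphi(d)\leq d$ in the third gives $\ll dq^{-1/8+\varepsilon}$; adding the $O(q^{-1/4+\varepsilon})$ from the main term gives the claimed $O(q^{\varepsilon}(q^{1/2}/d^{1/4}+d/q^{1/8}))$.

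The analytic substance is already contained in the cited lemmas (Poisson summation and the quadratic Gauss sum evaluation in Lemmas~\ref{method B+} and \ref{method A+}, the Mellin/contour computation in Lemma~\ref{W combo}); what is left here is bookkeeping. The one point that genuinely needs care is the compatibility of the two off-diagonal treatments: Poisson in $\ell$ and Poisson in $n$ must yield the \emph{same} secondary term $\CA^{\pm}_{m>n}(M,N)$ — this is exactly why Lemma~\ref{method A+} was stated carrying the $\CA^{\pm}$ term even though it does not emerge naturally from Poisson in $n$ — so that this term may be summed over all dyadic blocks before the split and reassembled into $\CW$. I expect that, together with the choice of threshold $R$ balancing the two error bounds against each other, to be the only non-routine aspect.
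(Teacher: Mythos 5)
Your proposal is correct and follows essentially the same route as the paper: splitting the dyadic blocks at the threshold $M \asymp d^{1/2}N$, applying Lemma~\ref{method B+} on the unbalanced range and Lemma~\ref{method A+} on the balanced range, reassembling the common secondary term $\CA^{\pm}_{m>n}(M,N)$ across all blocks into $\CW$ via Lemma~\ref{W combo}, and balancing the error contributions. The only (harmless) difference is your separate treatment of bounded $d$ to guarantee the $M > 2^{10}N$ hypothesis of Lemma~\ref{method B+}; under the standing hypotheses of Theorem~\ref{main2} one has $d \geq q^{1/3}$, so $d^{1/2} > 2^{10}$ holds automatically for $q$ large, and the paper implicitly relies on this.
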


\begin{proof}
We begin by splitting the dyadic summations of $\CM_{m>n}$ into two ranges depending on whether $M$ and $N$ are nearby or far apart. The cutoff for these two ranges is $M=d^{1/2}N$. Therefore, starting at \eqref{bound break}, we write
\begin{equation*}
    \CM_{m>n} = \mathop{\sum\sum}_{\substack{M,N \text{ dyadic}\\M\geq d^{1/2}N}} \frac{\varphi(d)}{\sqrt{MN}} \sum_\pm \CB^\pm_{m>n}(M,N) + \mathop{\sum\sum}_{\substack{M,N \text{ dyadic}\\M< d^{1/2}N}} \frac{\varphi(d)}{\sqrt{MN}} \sum_\pm \CB^\pm_{m>n}(M,N).
\end{equation*}
For the first term we apply Lemma \ref{method B+}, and for the second term we apply Lemma \ref{method A+}. Hence
\begin{multline}
\label{eq:Mm>nEstimate}
    \CM_{m>n} = \mathop{\sum\sum}_{\substack{M,N \text{ dyadic}\\M\geq d^{1/2}N}} \frac{\varphi(d)}{\sqrt{MN}} \Big\{\sum_\pm \CA^\pm_{m>n}(M,N) + O\Big(\frac{Nq^{1/2+\varepsilon}}{d}\Big)\Big\} \\+ \mathop{\sum\sum}_{\substack{M,N \text{ dyadic}\\M< d^{1/2}N}} \frac{\varphi(d)}{\sqrt{MN}} \Big\{\sum_\pm\CA^\pm_{m>n}(M,N) + O\Big(q^{\varepsilon}\Big( \frac{Mq^{1/2}}{d^{3/2}} + \frac{M^{1/4}N}{q^{1/4}} \Big)\Big)\Big\}.
\end{multline}
Rearranging and evaluating the main term, 
we obtain
\begin{equation}\label{combin}
    \CM_{m>n} = \CA_{m>n} + \mathop{\sum\sum}_{\substack{M,N \text{ dyadic}\\M\geq d^{1/2}N}} O\Big(\frac{N^{1/2}q^{1/2+\varepsilon}}{M^{1/2}}\Big) + \mathop{\sum\sum}_{\substack{M,N \text{ dyadic}\\M< d^{1/2}N}} O\Big(q^{\varepsilon}\Big( \frac{M^{1/2}q^{1/2}}{N^{1/2}d^{1/2}} + \frac{N^{1/2}d}{M^{1/4}q^{1/4}} \Big)\Big),
\end{equation}
where
\begin{equation*}
    \CA_{m>n} = \mathop{\sum\sum}_{M,N \text{ dyadic}} \frac{\varphi(d)}{\sqrt{MN}} \sum_\pm \CA^\pm_{m>n}(M,N).
\end{equation*}
Using that we may restrict to $N \ll M$ and $MN \ll q^{1+\varepsilon}$, it is easy to see that the error term simplifies to give
\begin{equation*}
    \CM_{m>n}= \CA_{m>n} + O\Big(q^\varepsilon\Big(\frac{q^{1/2}}{d^{1/4}}+\frac{d}{q^{1/8}}\Big)\Big).
\end{equation*}
For the purposes of Theorem \ref{main2}, we have $q \geq d^2$ so that $d q^{-1/8} \leq d^{-1/4} q^{1/2}$, so the latter error term can be dropped.  The displayed error term is consistent with Theorem \ref{main2}.

Now, we turn to $\CA_{m>n}$, which takes the form
\begin{align*}
 \mathop{\sum\sum}_{M,N \text{ dyadic}} \frac{\varphi(d)}{\sqrt{MN}} \left(\frac{-2 a_\psi}{q/d^2}\right) \varepsilon_{q} \frac{d}{\sqrt{q}} 
e_q(\overline{2a_\psi}(a_\psi-b_\psi)^2)
    \sum_{n|b_\psi} \Big( \widehat{W^{+}_{n,d}}\Big(\frac{-b_\psi d}{nq}\Big) +\widehat{W^{-}_{n,d}}\Big(\frac{b_\psi d}{nq}\Big)  \Big)
    \\
    = 
     \varphi(d) \left(\frac{-2 a_\psi}{q/d^2}\right) \varepsilon_{q} \frac{d}{\sqrt{q}} 
e_q(\overline{2a_\psi}(a_\psi-b_\psi)^2) \CW,
\end{align*}
recalling the definition \eqref{eq:CWdef}.
Applying Lemma \ref{W combo} shows that 
\begin{equation*}
 \CA_{m>n} = \CA_{m>n}(\psi) + O(d^{3/2} q^{-1+\varepsilon}).
\end{equation*}
Note $d^{3/2} q^{-1} \leq d q^{-1/8}$, so this error term can be dropped. 
\end{proof}

Next consider $\CM_{m<n}$, for which recall \eqref{symmetry}.  
Lemma \ref{postnikov} implies 
$    a_{\overline\psi} \equiv -a_\psi \Mod{q/d}$, and recalling \eqref{a range}, we have 
$a_{\overline\psi} = -a_\psi$.  
Similarly, $b_{\overline{\psi}} = - b_{\psi}$.  Hence we deduce:
\begin{lemma}\label{m<n end}
We have
\begin{equation*}
    \CM_{m<n}(\psi) = \CA_{m<n}(\psi) + O\Big(q^\varepsilon\Big(\frac{q^{1/2}}{d^{1/4}}+\frac{d}{q^{1/8}}\Big)\Big),
\end{equation*}
where
\begin{equation*}
    \CA_{m<n}(\psi) = 
\left(\frac{ 2 a_\psi}{q/d^2}\right) \varepsilon_{q }  \frac{\varphi(d)}{2}  
e_q(-\overline{2a_\psi}(a_\psi-b_\psi)^2)    
     \frac{\sigma_0(|b_\psi|)}{\sqrt{|b_\psi|}}. 
\end{equation*}
\end{lemma}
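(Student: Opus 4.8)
The plan is to obtain Lemma~\ref{m<n end} as an essentially immediate consequence of Lemma~\ref{m>n end} together with the symmetry \eqref{symmetry}. First I would invoke \eqref{symmetry}, which gives $\CM_{m<n}(\psi) = \CM_{m>n}(\overline\psi)$. Since $\psi$ is even of conductor $q$ with $(q,3)=1$ and $d^2 \preceq q \preceq d^3$, its conjugate $\overline\psi$ satisfies exactly the same hypotheses, so Lemma~\ref{m>n end} applies verbatim to $\overline\psi$ and yields $\CM_{m>n}(\overline\psi) = \CA_{m>n}(\overline\psi) + O(q^\varepsilon(q^{1/2}d^{-1/4} + d q^{-1/8}))$. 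The error term produced by Lemma~\ref{m>n end} depends only on $d$ and $q$ and not on the character, so it is unchanged upon passing from $\psi$ to $\overline\psi$; this is the one point that should be noted explicitly.

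It then remains to show $\CA_{m>n}(\overline\psi) = \CA_{m<n}(\psi)$, which is pure bookkeeping with the Postnikov data of $\overline\psi$. By Lemma~\ref{postnikov} the assignment $\psi \mapsto a_\psi$ is a group homomorphism, so $a_{\overline\psi} \equiv -a_\psi \pmod{q/d}$, and the normalization \eqref{a range} upgrades this to $a_{\overline\psi} = -a_\psi$; reducing modulo $d$ and using the normalization \eqref{b range} gives $b_{\overline\psi} = -b_\psi$. Substituting these into the formula for $\CA_{m>n}$ from Lemma~\ref{m>n end}, I would use $\leg{-2a_{\overline\psi}}{q/d^2} = \leg{2a_\psi}{q/d^2}$, the identity $\overline{2a_{\overline\psi}} \equiv -\overline{2a_\psi} \pmod q$ for multiplicative inverses, the evenness $(a_{\overline\psi} - b_{\overline\psi})^2 = (a_\psi - b_\psi)^2$, and $|b_{\overline\psi}| = |b_\psi|$. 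These four substitutions transform $\CA_{m>n}(\overline\psi)$ precisely into the claimed expression for $\CA_{m<n}(\psi)$.

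There is no substantive obstacle: the argument is a symmetry reduction, and the only place demanding genuine care is the consistent tracking of signs through the Jacobi symbol, the inverse modulo $q$ appearing inside the exponential $e_q(\cdot)$, and the quadratic argument $(a_\psi - b_\psi)^2$. Each of these is either sign-neutral or absorbs the sign flip coming from $a_{\overline\psi} = -a_\psi$, $b_{\overline\psi} = -b_\psi$, so that $\CA_{m>n}(\overline\psi)$ lands exactly on $\CA_{m<n}(\psi)$ and not on its negative.
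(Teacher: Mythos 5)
Your proposal is correct and follows exactly the route taken in the paper: invoke the symmetry \eqref{symmetry} to reduce to $\CM_{m>n}(\overline\psi)$, apply Lemma \ref{m>n end}, and track the Postnikov data via $a_{\overline\psi} = -a_\psi$ and $b_{\overline\psi} = -b_\psi$ (which the paper derives from the homomorphism property in Lemma \ref{postnikov} and the normalizations \eqref{a range}, \eqref{b range}). Your bookkeeping of the sign flips through the Jacobi symbol, the inverse inside $e_q(\cdot)$, the squared difference, and $|b_\psi|$ is accurate and matches the paper's (more terse) presentation.
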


\subsection{Combining $\CM_{m>n}$ and $\CM_{m<n}$}
From Lemmas \ref{m>n end} and \ref{m<n end}, we get that
\begin{equation*}
    \CM_{m>n} + \CM_{m<n} = \CA' + O\left(q^\varepsilon\left(\frac{q^{1/2}}{d^{1/4}}+\frac{d}{q^{1/8}}\right)\right),
\end{equation*}
where
\begin{equation*}
    \CA' = \left(\frac{-2 a_\psi}{q/d^2}\right) \varepsilon_{q }  \frac{\varphi(d)}{2}  
\Big[
e_q(\overline{2a_\psi}(a_\psi-b_\psi)^2)    
    + \Big(\frac{-1}{q}\Big) e_q(-\overline{2a_\psi}(a_\psi-b_\psi)^2 ) \Big].
\end{equation*}
Therefore, if $q\equiv 1\Mod{4}$, then
\begin{align*}
    \CA'
    &= \left(\frac{2 a_\psi}{q/d^2}\right) \varphi(d) \cos\Big(2\pi \frac{\overline{2a_\psi}(a_\psi-b_\psi)^2}{q}\Big) \frac{\sigma_0(|b_\psi|)}{\sqrt{|b_\psi|}}.
\end{align*}
This is consistent with \eqref{main2.2} for $q \equiv 1 \pmod{4}$.
If instead $q\equiv 3\Mod{4}$, then
\begin{align*}
    \CA'
    &= \left(\frac{2 a_\psi}{q/d^2}\right) \varphi(d) 
    \sin\Big(2\pi \frac{\overline{2a_\psi}(a_\psi-b_\psi)^2}{q}\Big) \frac{\sigma_0(|b_\psi|)}{\sqrt{|b_\psi|}}.
\end{align*}
This derivation agrees with \eqref{main2.2}. Combining this with Lemma \ref{m=n end} proves Theorem \ref{main2}.

\section{Sketching the Proofs of Remaining Theorems}
The proof of Theorem \ref{main1} is similar to the proof of Theorem \ref{main2}, except in some ways which make it simpler. Likewise, the proof of Theorem \ref{main3} will essentially use a subset of the tools used to prove Theorem \ref{main2}.  In order to avoid excessive repetition, we only give sketches of these proofs.
\subsection{Sketch of the Proof of Theorem \ref{main1}}\label{sketch1}
The structure of the proof of Theorem \ref{main1} is similar to that of Theorem \ref{main2}.  As a substitute for Lemma \ref{method B+}, we have the following.
\begin{lemma}\label{method B+thm1}
Suppose that $d\prec q \preceq d^2$, and that
 $M > 2^{10} N$.
Then
\begin{equation*}
    \CB^{\pm}_{m>n}(M,N) = \CA^{\pm}_{m>n}(M,N) + O\left(Nq^{\varepsilon}\right),
\end{equation*}
where
\begin{equation*}
    \CA^+_{m>n}(M,N) = 
    \sum_{n|b_\psi} \widehat{W^{\pm}_{n,d}}\left(\frac{-b_\psi d}{nq}\right).
\end{equation*}
\end{lemma}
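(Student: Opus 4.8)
The plan is to transcribe the proof of Lemma~\ref{method B+} almost verbatim, the one structural simplification being that the hypothesis $q \preceq d^2$ renders the relevant complete exponential sum elementary. Indeed, when $q | d^2$ the quadratic (and higher) terms of $\CL_q(1+du)$ may be discarded modulo $q$ (cf.\ Lemma~\ref{modularity}(1)), so that the Postnikov formula reduces to the linear relation $\psi(1+du) = e_q(a_\psi d u)$, and the sum $\CS_{q,d}(\psi,k)$ is given by the trivial orthogonality evaluation of Lemma~\ref{sum eval2} rather than the quadratic Gauss sum evaluation of Lemma~\ref{sum eval}. With this replacement in hand, the rest of the argument runs parallel to Lemma~\ref{method B+}.

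Starting from $\CB^{\pm}_{m>n}(M,N)$ in \eqref{method split}, the hypothesis $M > 2^{10}N$ together with the support condition \eqref{W_B supp} forces $l \asymp M/d$, so the sum over $l \geq 1$ may be completed to a sum over all $l \in \BZ$ up to negligible error, and Poisson summation in $l$ applies. Performing the substitutions $u \mapsto \pm n u$ and $j \mapsto \pm j$ and invoking that $\psi$ is even, exactly as in Lemma~\ref{method B+}, one arrives at
\begin{equation*}
\CB^{\pm}_{m>n}(M,N) = \frac{d}{q}\mathop{\sum_{n \geq 1}\sum_{j \in \BZ}}_{(n,q)=1} \widehat{W^{\pm}_{n,d}}\Big(\frac{\pm dj}{q}\Big)\,\CS_{q,d}(\psi, jn).
\end{equation*}
By Lemma~\ref{sum eval2}, the inner sum equals $q/d$ when $jn \equiv -a_\psi \Mod{q/d}$ and vanishes otherwise; since $(a_\psi, q/d) = 1$ and $q$, $q/d$, $d$ share all their prime factors, this congruence already forces $(n,q) = 1$, and it forces $j \neq 0$ because $0 < |a_\psi| < q/(2d)$ by \eqref{a range}. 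The factor $q/d$ cancels the prefactor $d/q$, leaving
\begin{equation*}
\CB^{\pm}_{m>n}(M,N) = \mathop{\sum_{n \geq 1}\sum_{j \in \BZ}}_{jn \equiv -a_\psi \Mod{q/d}} \widehat{W^{\pm}_{n,d}}\Big(\frac{\pm dj}{q}\Big).
\end{equation*}
The contribution of the single representative $jn = -a_\psi$ (unique with $|jn| < q/(2d)$), that is, of $j = -a_\psi/n$ with $n | a_\psi$ and $n \asymp N$, is precisely the main term $\CA^{\pm}_{m>n}(M,N)$ of the statement; here $b_\psi = a_\psi$, since the congruence is modulo $q/d$ and no further reduction is needed.

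It remains to bound the remaining terms, those with $k := jn \equiv -a_\psi \Mod{q/d}$ and $k \neq -a_\psi$. For such $k$, the spacing $q/d$ of the solutions together with \eqref{a range} gives $|k| \geq q/(2d)$. The decay bound \eqref{eq:Whatbound} lets me truncate at $|j| \ll q^{1+\varepsilon}/M$ with negligible error; since $n \asymp N$ on the support this forces $|k| \ll Nq^{1+\varepsilon}/M$, and on this range $\widehat{W^{\pm}_{n,d}}(\pm dj/q) \ll M/d$. If $M \gg N d q^{\varepsilon}$ then $Nq^{1+\varepsilon}/M < q/(2d)$ and no admissible $k$ survives, so the error is negligible; otherwise the number of $k$ with $q/(2d) \leq |k| \ll Nq^{1+\varepsilon}/M$ and $k \equiv -a_\psi \Mod{q/d}$ is $\ll 1 + Ndq^{\varepsilon}/M \ll Ndq^{\varepsilon}/M$, and each factors as $k = jn$ with $n \asymp N$ in $O(q^{\varepsilon})$ ways. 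Hence the off-diagonal total is $\ll q^{\varepsilon}\cdot (M/d)\cdot (Ndq^{\varepsilon}/M) = Nq^{\varepsilon}$, as claimed. The only step that needs a little care is this final count: the point is that off-diagonal solutions occur only when $M \lesssim Nd$, which is exactly what collapses the bound to the clean $O(Nq^{\varepsilon})$, in contrast to Lemma~\ref{method B+}, where the congruence is modulo $d$ and the surviving quadratic Gauss sum introduces the extra factor $d/\sqrt{q}$.
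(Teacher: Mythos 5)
Your proof is correct and follows essentially the same route as the paper's: transfer the argument of Lemma~\ref{method B+}, replacing the quadratic Gauss sum evaluation (Lemma~\ref{sum eval}) by the orthogonality evaluation (Lemma~\ref{sum eval2}), which changes the modulus of the congruence from $d$ to $q/d$, and then run the same truncation/counting bound to get $O(Nq^\varepsilon)$. You also correctly observe that $b_\psi = a_\psi$ in the regime $q \preceq d^2$ (since $|a_\psi| < q/(2d) \leq d/2$), which the paper states more tersely as "there is no need to introduce $b_\psi$."
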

\begin{proof}
We follow through the proof of Lemma \ref{method B+}, and note that the earliest difference will occur at \eqref{eq:Bmn+formulaPoissoninell} when Lemma \ref{sum eval} was used to evaluate $\CS_{q,d}(\psi, jn)$.
 The condition that $q\preceq d^2$ means that we need to use Lemma \ref{sum eval2} in place of Lemma \ref{sum eval}.  
 In practical terms, this means that in place of \eqref{eq:Bmn+formulaPoissoninell} we instead obtain
\begin{equation*}
    \CB^+_{m>n}(M,N) = \mathop{\sum_{n\geq1} \sum_{j\neq 0}}_{jn\equiv -a_\psi\Mod{q/d}} \widehat{W^{\pm}_{n,d}}\left(\frac{dj}{q}\right).
\end{equation*}
In this case, there is no need to introduce $b_{\psi}$, since we have $jn \equiv - a_{\psi} \pmod{q/d}$, and $a_{\psi}$ is inherently defined modulo $q/d$.  The term $\CA$ corresponds to the term $jn = -a_{\psi}$, while the error terms are, similarly to \eqref{eq:ETphonehome} and \eqref{eq:ETphonehome2}, bounded by
\begin{equation*}
 \mathop{\sum_{n\geq1}\sum_{j\neq0}}_{\substack{jn\equiv -a_\psi\Mod{q/d}\\jn\neq -b_\psi}} \left|\widehat{W^{\pm}_{n,d}}\left(\frac{dj}{q}\right)\right|
\ll q^{\varepsilon}  \frac{M}{d} \frac{Nq}{M (q/d)}
\ll q^{\varepsilon} N. \qedhere
\end{equation*}
\end{proof}
Lemma \ref{A bound} holds without changes, since the only assumption there is $d \prec q$.  As in Lemma \ref{method A+}, we can freely insert the term $\mathcal{A}_{m>n}^{\pm}(M,N)$ since it is bounded by $d^{-1} M q^{\varepsilon}$, which is in turn bounded by $d^{-3/2} M q^{1/2+\varepsilon}$.  
The new cutoff in the proof of Lemma \ref{m>n end} is $Mq^{1/2}=Nd^{3/2}$, so this error term is absorbed by the error in Lemma \ref{method B+thm1}.  In place of \eqref{eq:Mm>nEstimate}, we obtain
\begin{multline}
\label{eq:Mm>nEstimatev2}
    \CM_{m>n} = \mathop{\sum\sum}_{\substack{M,N \text{ dyadic}\\M\geq q^{-1/2} d^{3/2} N}} \frac{\varphi(d)}{\sqrt{MN}} \Big\{\sum_\pm \CA^\pm_{m>n}(M,N) + O\Big( N q^{\varepsilon} \Big)\Big\} \\+ \mathop{\sum\sum}_{\substack{M,N \text{ dyadic}\\M< q^{-1/2} d^{3/2} N}} \frac{\varphi(d)}{\sqrt{MN}} \Big\{\sum_\pm\CA^\pm_{m>n}(M,N) + O\Big(q^{\varepsilon}\Big( \frac{Mq^{1/2}}{d^{3/2}} + \frac{M^{1/4}N}{q^{1/4}} \Big)\Big)\Big\}.
\end{multline}
In total, this error term is of size $d^{1/4} q^{1/4+\varepsilon} + d^{} q^{-1/8+\varepsilon}$.  Under the hypotheses of Theorem \ref{main1}, we have $q \leq d^2$ and hence $d^{1/4} q^{1/4} \leq d q^{-1/8}$, so the former term can be discarded.  The error term is then seen to be consistent with the statement of Theorem \ref{main1}.

The assembly of the term $\CA$ is similar to that of $\CA'$, 
though it is simpler since there is no need to introduce $b_{\psi}$,
and leads to
\begin{equation*}
\CA = \frac{\varphi(d)}{d} \sqrt{q} \;
    \frac{\sigma_0(|a_\psi|)}{\sqrt{|a_\psi|}}.
\end{equation*}
This concludes the discussion of the proof of Theorem \ref{main1}.

\subsection{A Sketch of the Proof of Theorem \ref{main3}}\label{sketch2}
Since Theorem \ref{main3} is an upper bound, we can arrange the second moment as follows:
\begin{equation*}
\sum_{\chi \shortmod{d}} |L(1/2, \chi \cdot \psi)|^2
\ll \sum_{\chi \shortmod{d}} \Big| \sum_{M \text{ dyadic}} \sum_m \frac{\chi(m) \psi(m)}{\sqrt{m}} \eta\Big(\frac{m}{M} \Big) V\Big(\frac{m}{\sqrt{q}} \Big) \Big|^2.
\end{equation*}
This uses an approximate functional equation for $L(1/2, \chi \cdot \psi)$ in place of Lemma \ref{afe}.  Applying Cauchy's inequality to take $M$ to the outside of the square, we obtain
\begin{equation*}
\sum_{\chi \shortmod{d}} |L(1/2, \chi \cdot \psi)|^2
\ll q^{\varepsilon} \sum_{M \text{ dyadic}} \sum_{\chi \shortmod{d}} \Big| \sum_m \frac{\chi(m) \psi(m)}{\sqrt{m}} \eta\Big(\frac{m}{M}\Big)   V\Big(\frac{m}{\sqrt{q}} \Big) \Big|^2.
\end{equation*}
The purpose of this trick is to completely avoid the ranges where $M$ and $N$ are far apart.

Squaring this out and applying orthogonality of characters, we obtain a diagonal term of size $\ll d q^{\varepsilon}$.  For the off-diagonal terms we 
essentially arrive at
\begin{equation*}
\sum_{M \text{ dyadic}} \frac{d}{M} |\CB_{m > n}(M,M)| \ll q^{\varepsilon} \left( \frac{q^{1/2}}{d^{1/2}} + \frac{d}{q^{1/8}} \right),
\end{equation*}
using Lemma \ref{A bound} for the final bound.  The second error term can be dropped in comparison to the diagonal term.  In all, we obtain the bound in Theorem \ref{main3}.

\end{document}